\tikzset{
    >=stealth,
    every picture/.style={thick},
    graphs/every graph/.style={empty nodes},
}
\tikzstyle{vertex}=[
\tikzstyle{printersafe}=[decoration={snake,amplitude=0pt}]
\newcommand{\Pic}{\operatorname{Pic}}
\newcommand{\Diff}{\operatorname{Diff}}
\newcommand{\codim}{\operatorname{codim}}
\newcommand{\Spec}{\operatorname{Spec}}
\newcommand{\lcm}{\operatorname{lcm}}
\newcommand{\Aut}{\operatorname{Aut}}
\newcommand{\pp}{\mathbb{P}}
\newcommand{\qq}{\mathbb{Q}}
\newcommand{\zz}{\mathbb{Z}}
\newcommand{\nn}{\mathbb{N}}
\newcommand{\kk}{\mathbb{K}}
\newcommand{\Af}{\mathbb{A}}
\def\O#1.{\mathcal {O}_{#1}}			
\def\pr #1.{\mathbb P^{#1}}				
\def\af #1.{\mathbb A^{#1}}			
\def\ses#1.#2.#3.{0\to #1\to #2\to #3 \to 0}	
\def\xrar#1.{\xrightarrow{#1}}			
\def\K#1.{K_{#1}}						
\def\bA#1.{\mathbf{A}_{#1}}			
\def\bM#1.{\mathbf{M}_{#1}}				
\def\bN#1.{\mathbf{N}_{#1}}
\def\bL#1.{\mathbf{L}_{#1}}				
\def\bB#1.{\mathbf{B}_{#1}}				
\def\bK#1.{\mathbf{K}_{#1}}			
\def\subs#1.{_{#1}}					
\def\sups#1.{^{#1}}						
\DeclareMathOperator{\coeff}{coeff}	
\DeclareMathOperator{\Supp}{Supp}
\newcommand{\rar}{\rightarrow}
  \newtheorem{introthm}{Theorem}
  \newtheorem{introcor}[introthm]{Corollary}
  \newtheorem{introprop}[introthm]{Proposition}
  \newtheorem{theorem}{Theorem}[section]
  \newtheorem{lemma}[theorem]{Lemma}
  \newtheorem{proposition}[theorem]{Proposition}
  \newtheorem{corollary}[theorem]{Corollary}
\theoremstyle{definition}
  \newtheorem{notation}[theorem]{Notation}
  \newtheorem{definition}[theorem]{Definition}
  \newtheorem{example}[theorem]{Example}
\newtheorem{remark}[theorem]{Remark}
\theoremstyle{remark}
\numberwithin{equation}{section}
\begin{document}

\title[Index of coregularity zero log Calabi--Yau pairs]{Index of coregularity zero log Calabi--Yau pairs}

\author[S.~Filipazzi]{Stefano Filipazzi}
\address{
EPFL SB MATH CAG
MA C3 625 (B\^atiment MA)
Station 8
CH-1015 Lausanne, Switzerland.
}
\email{stefano.filipazzi@epfl.ch}

\author[M.~Mauri]{Mirko Mauri}
\address{Institute of Science and Technology Austria, 3400 Klosterneuburg, Austria.}
\email{mirko.mauri@ist.ac.at}

\author[J.~Moraga]{Joaqu\'in Moraga}
\address{UCLA Mathematics Department, Box 951555, Los Angeles, CA 90095-1555, USA.
}
\email{jmoraga@math.ucla.edu}

\subjclass[2020]{Primary 14B05, 14E30, 14L24, 14M25;
Secondary  14A20, 53D20.}
\thanks{
SF was partially supported by ERC starting grant \#804334. MM was supported by the University of Michigan and the Institute of Science and Technology Austria. This project has received funding from the European Union’s Horizon 2020 research and innovation
programme under the Marie Skłodowska-Curie grant agreement No 101034413.
}

\begin{abstract}
In this article, we study the index of log Calabi--Yau pairs $(X,B)$ of coregularity 0. We show that $2\lambda(K_X+B)\sim 0$, where
$\lambda$ is the Weil index of $(X,B)$. This is in contrast to the case of klt Calabi--Yau varieties, where the index can grow doubly exponentially with the dimension. Our sharp bound on the index extends to the context of generalized log Calabi--Yau pairs, semi-log canonical pairs, and isolated log canonical singularities of coregularity 0.
As a consequence, we show that the index of a variety appearing in the Gross--Siebert program or in the Kontsevich--Soibelman program is at most $2$.
Finally, we discuss applications to Calabi--Yau varieties endowed with a finite group action, including holomorphic symplectic varieties endowed with a purely non-symplectic automorphism.
\end{abstract}

\maketitle
\setcounter{tocdepth}{1} 
\tableofcontents

\section{Introduction}

A \emph{log Calabi--Yau pair of coregularity $0$} is a pair $(X, B)$ consisting of a proper
normal variety $X$ and an effective $\qq$-divisor $B \sim_{\qq} - K_X$ such that $(X,B)$ is log canonical and whose dual complex $\mathcal{DMR}(X,B)$ (see Definition \ref{rmk:dualcomplex}) has maximal dimension $\dim \mathcal{DMR}(X,B) = \dim X -1$.
For instance, a normal toric variety with its toric boundary is an example of a log Calabi--Yau pair of coregularity $0$. 

In this article, we study the index of log Calabi--Yau pairs of coregularity $0$.

\subsection{Index conjecture}
From the point of view of the Minimal Model Program (MMP), Calabi--Yau varieties are fundamental building blocks of algebraic varieties.
Their canonical line bundle 
is numerically trivial, i.e., 
its restriction to any curve has degree 0. 
By the abundance conjecture,
which is known in this special case \cite{Gon13}, the canonical divisor of a Calabi--Yau variety
is $\qq$-linearly trivial, i.e., it is a torsion
element of the Picard group.
The smallest positive integer $m$ for which 
$mK_X \sim 0$ is known as the (Cartier) {\em index} of the Calabi--Yau variety.
For instance, $m=1$ if $\dim X =1$, and $m$ divides $12$ if $\dim X =2$ and $X$ is smooth.

Inspired by the MMP, it is natural to study the same question 
for pairs $(X,B)$ with mild singularities. 
Here, mild singularities mean either
klt or log canonical.
When we consider mildly singular varieties
or pairs, the situation gets more interesting.
In this case, the index will also depend
on the coefficient set of $B$.
For instance, 
if we consider $2$-dimensional klt Calabi--Yau
pairs $(X,B)$ with standard coefficients, the largest possible index
of $K_X+B$ is 66;
see~\cite[Proposition 4.9]{Ish00} for the bound and ~\cite[Main Theorem and \S3]{Kondo92} for its sharpness.
It is conjectured that, once we fix
the dimension and coefficient set, 
there are only finitely many possible indices
for the divisor $K_X+B$.
This conjecture is known as the {\em index conjecture}
in birational geometry.
It is known up to dimension 3
due to the work of Jiang~\cite{Jiang21} and Xu~\cite{Xu20}.
In~\cite{ETW22}, the authors construct a sequence of klt Calabi--Yau varieties $X_d$ for which the index grows doubly exponentially with the dimension $d$. 
These Calabi--Yau varieties, regarded as log Calabi--Yau pairs with empty boundary, have coregularity equal to their dimension.
In contrast, our results show that, if the coregularity is minimal (i.e., equal to 0), the index does not depend on the dimension.

\subsection{Main Theorem}\label{sec:MainThm}
In this article, we focus on log Calabi--Yau pairs $(X,B)$ of coregularity 0; see Definition~\ref{defn:coreg}. 
These pairs arise naturally in mirror symmetry, and they are often referred to as pairs of maximal intersection; see~\cite{KX16}. Indeed, the condition that the dual complex has maximal dimension means that, up to a dlt modification, the maximal number of irreducible components of $B^{=1}$ intersecting at some point is the maximal possible, namely $\dim X$.
These pairs also appear in relation to maximally unipotent degenerations of Calabi--Yau varieties; see \S\ref{sec:applicationmirrorsymmetry} and also~\cite{KX16, NX16, NXYu19}. In this setting, we prove
that the index of $K_X+B$ is independent
of the dimension and only depends 
on the coefficient set of $B$, and it is actually $1$ or $2$ if $B$ is reduced.

\begin{introthm}\label{introthm:main-thm}
Let $(X,B,\bM.)$ be a projective generalized log Calabi--Yau pair of coregularity 0 and Weil index $\lambda$.
Then, we have that $\lambda'(K_X+B+\bM X.)\sim 0$,
where $\lambda'=\lcm(\lambda,2)$.
\end{introthm}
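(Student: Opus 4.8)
The plan is to reduce the statement to the reduced (divisorial) case by a standard perturbation argument, and then analyze the combinatorics of the dual complex. First I would pass to a $\qq$-factorial dlt modification $f\colon (Y, B_Y + \bM.) \to (X, B + \bM.)$, which preserves coregularity $0$ and the property of being generalized log Calabi--Yau, and for which pushing forward along $f$ reduces the claim to proving $\lambda'(K_Y + B_Y + \bM Y.) \sim 0$. Write $B_Y = B_Y^{=1} + \{B_Y\}$, where $B_Y^{=1} = \sum D_i$ is the reduced part. Since $(Y,B_Y,\bM.)$ has coregularity $0$, the dual complex $\mathcal{DMR}(Y, B_Y, \bM.)$ has dimension $\dim Y - 1$, so there is a stratum (a connected component of an intersection $D_{i_0} \cap \cdots \cap D_{i_{\dim Y}}$ of maximal codimension) which is a point $p$. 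The key structural input is that, étale-locally near $p$, a $\qq$-factorial dlt pair of coregularity $0$ looks like a toric pair: the $\dim Y$ boundary divisors through $p$ behave like the toric boundary of $\mathbb{A}^{\dim Y}$, and the moduli part $\bM.$ is trivial there because its trace is nef and its restriction to the zero-dimensional stratum vanishes. Hence $K_Y + B_Y + \bM Y.$ is Cartier at $p$ with local index dividing... no: the point is that $K_Y + B_Y$ is \emph{Cartier near $p$}, so $\lambda$ (the Weil index, i.e. the smallest positive integer with $\lambda(K_Y+B_Y+\bM Y.)$ Weil-Cartier, equivalently integral as a Weil divisor class) controls the obstruction away from such strata.

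Next I would use the torsion statement: by \cite{Gon13} (abundance in the log Calabi--Yau setting) together with the generalized pair machinery, $K_Y + B_Y + \bM Y.$ is torsion in $\operatorname{Cl}(Y)$, say of order $N$; the goal is to show $N \mid \lambda' = \lcm(\lambda,2)$. The divisor $N(K_Y + B_Y + \bM Y.) \sim 0$ defines an index-$N$ cyclic cover $\pi\colon \widetilde Y \to Y$, which is crepant: $K_{\widetilde Y} + \widetilde B + \widetilde{\bM.} = \pi^*(K_Y + B_Y + \bM Y.)$, and $(\widetilde Y, \widetilde B, \widetilde{\bM.})$ is again generalized log Calabi--Yau of coregularity $0$ (coregularity is preserved under crepant finite covers). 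Now restrict everything to the point $p$: since $K_Y + B_Y + \bM Y.$ is Cartier at $p$ with a chosen trivialization $N(K_Y+B_Y+\bM Y.)\sim 0$, the cover $\pi$ over a neighborhood of $p$ is determined by the monodromy of the $\lambda$-torsion residue data along the $\dim Y$ divisors meeting at $p$ — and here the $2$ enters: when $\dim Y$ is such that the relevant orientation/monodromy class is $2$-torsion (this is exactly the phenomenon that in the toric-like local model the only nontrivial étale cover preserving the boundary and crepancy is the double cover swapping two sheets), one gets $N \mid \lambda$ if the local contribution is trivial and $N \mid 2\lambda$ in general. More precisely, I would localize at $p$, pull back the class $\frac{N}{\lambda}$-torsion... let me restructure: set $\mu = \lambda'$; it suffices to show $\mu(K_Y+B_Y+\bM Y.) \sim 0$. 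Consider the Weil divisor $\mu(K_Y + B_Y + \bM Y.)$, which is Cartier (as $\lambda \mid \mu$) and numerically trivial, hence torsion; let $e$ be its order. Then the degree-$e$ cyclic cover $(\widetilde Y,\widetilde B) \to (Y, B_Y)$ is crepant and coregularity $0$, so it has a $0$-dimensional stratum over $p$; counting boundary components of $\widetilde B$ over the $\dim Y$ components of $B_Y^{=1}$ through $p$ forces the local covering group to act by permutations on $\{1,\dots,\dim Y\}$-indexed sheets, and the crepant + Calabi--Yau condition forces this permutation action to factor through the sign character, giving $e \mid 2$. Combined with $\mu(\cdots)$ already Cartier this yields $e\mu(K_Y+B_Y+\bM Y.)\sim 0$ with $e \mid 2$, hence... the clean statement I am aiming for is that after clearing $\lambda$, the residual torsion is exactly $2$-torsion, so $\lcm(\lambda,2)(K_X+B+\bM X.)\sim 0$.

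The main obstacle, and where the real content lies, is the local analysis at a $0$-dimensional stratum: establishing that a $\qq$-factorial dlt (or bounded generalized) pair of coregularity $0$ with a $0$-dimensional log canonical center is formally/étale-locally the quotient of a toric pair, and that the nef moduli part $\bM.$ contributes nothing near such a center. This should follow from the classification of dlt log canonical centers of maximal dimension together with a "descent to the cover trivializes $\bM.$ on the stratum" argument using that $\bM.$ is $b$-nef and its trace has numerical dimension compatible with being supported away from the $0$-stratum; but making the permutation/sign-character argument precise — i.e. extracting the factor $2$ and no more — requires carefully tracking how the cyclic cover interacts with the simplicial structure of $\mathcal{DMR}$, essentially showing the cover is dual to an automorphism of the dual complex that fixes a top-dimensional simplex, hence is trivial or a reflection. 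I would organize this as: (i) dlt modification and reduction to $\qq$-factorial dlt; (ii) existence of a $0$-dimensional stratum and its toric-type local structure, including triviality of $\bM.$ there; (iii) the crepant cyclic cover $\pi$ and preservation of coregularity $0$; (iv) the permutation-action argument at the stratum forcing the residual index to divide $2$; (v) descent back to $X$.
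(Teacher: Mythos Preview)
Your proposal has a genuine gap at step (iv), the ``permutation/sign-character'' argument at a single $0$-dimensional stratum $p$. Near $p$ the dlt pair $(Y,B_Y^{=1})$ is snc, so $K_Y+B_Y^{=1}$ is already locally linearly trivial there; consequently the cyclic cover attached to $e$-torsion of $\mu(K_Y+B_Y+\bM Y.)$ is \emph{\'etale and split} over a neighborhood of $p$, and the Galois group simply permutes the $e$ preimages of $p$ freely and cyclically. No local constraint at $p$ forces $e\mid 2$. The factor $2$ is a genuinely global obstruction: in the paper it is extracted either from the orientability of the whole dual complex (Proposition~\ref{lem:pseudomflddualcomplex} together with the action of $\zz/m\zz$ on $H^{n}(\mathcal{D}(\widetilde B),\zz)=\zz$, which can only be by $\pm 1$), or equivalently from the sign ambiguity of Poincar\'e residues compared along $\pp^1$-links between \emph{all} $0$-dimensional strata (\S\ref{sec:residue}). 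Both arguments are global comparisons across the top simplices of $\mathcal{D}(B)$; neither can be read off from one simplex.

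A second gap is your treatment of $B^{<1}$ and $\bM.$. Noting that they look trivial near $p$ says nothing about the global linear equivalence $\lambda'(K_Y+B_Y+\bM Y.)\sim 0$. The paper handles these by a genuine dichotomy you do not have: if $B^{<1}+\bM X.\equiv 0$ one proves $\lambda\bM.\sim 0$ as a b-divisor via rational connectedness (Lemma~\ref{cor_0_pic0}) and reduces to the reduced-boundary case; if $B^{<1}+\bM X.\not\equiv 0$ one runs a $(K_X+B^{=1})$-MMP to a Mori fiber space, restricts to a horizontal component of $B^{=1}$, and applies the theorem in lower dimension together with a relative Kawamata--Viehweg vanishing (Proposition~\ref{fractional:part}). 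Finally, a technical slip: from $\lambda\mid\mu$ you only get that $\mu(K_Y+B_Y+\bM Y.)$ is an integral \emph{Weil} divisor, not Cartier; Cartierness is part of what has to be proved, so you cannot yet build the cyclic cover you want.
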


See Definition~\ref{def:Weilindex}
for the Weil index of a generalized pair.
Note that in order to have the linear equivalence
$D\sim 0$, the divisor $D$ must be Weil.
Hence, multiplying $K_X+B+\bM X.$ by $\lambda$ is necessary to compute its Cartier index.
In Example \ref{ex:RPN}, we show that taking the least common multiple of $\lambda$ and $2$ is needed in general.
The following special case of our main result is a key step in the proof of Theorem \ref{introthm:main-thm}.

\begin{introprop}
\label{cor:reducedCY}\label{dlt:reduced}
Let $(X,B)$ be a proper log Calabi--Yau pair of coregularity 0 with $B=B\sups =1.$.
Then, we have that $2 (K_X+B)\sim 0$.
\end{introprop}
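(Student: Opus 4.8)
The plan is to reduce to a dlt pair, to recognize the order of $K_X+B$ in $\Cl(X)$ as the order of a torsion line bundle on the snc locus, and then to identify that order with the order of the orientation character of the dual complex $\mathcal{DMR}(X,B)$, which is at most $2$. First I would pass to a dlt modification $f\colon(Y,B_Y)\to(X,B)$, so that $K_Y+B_Y=f^*(K_X+B)$, the pair $(Y,B_Y)$ is dlt, $B_Y=\lfloor B_Y\rfloor$ is again reduced, and $\mathcal{DMR}(Y,B_Y)=\mathcal{DMR}(X,B)$; thus $(Y,B_Y)$ is again a proper log Calabi--Yau pair of coregularity $0$ with reduced boundary. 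Since $2(K_Y+B_Y)=f^*(2(K_X+B))$ is an integral divisor and $f_*\mathcal{O}_Y=\mathcal{O}_X$, a trivialization $\mathcal{O}_Y(2(K_Y+B_Y))\cong\mathcal{O}_Y$ descends, after taking reflexive hulls, to $\mathcal{O}_X(2(K_X+B))\cong\mathcal{O}_X$; so we may assume $(X,B)$ is dlt. As $K_X+B\sim_{\qq}0$ by definition of a log Calabi--Yau pair, the class $[K_X+B]$ has finite order $N$ in $\Cl(X)$, and we must show $N\mid 2$. Let $U\subseteq X$ be the open locus where $(X,B)$ is snc; it contains all lc centers and its complement has codimension $\ge 2$, so $\Cl(X)\cong\Pic(U)$ and $N$ equals the order of the line bundle $L:=\omega_U(B|_U)=\mathcal{O}_U(K_U+B|_U)$.

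Because $L$ is an honest line bundle of order $N$ on the smooth variety $U$ and we are in characteristic $0$, the associated cyclic cover $\pi\colon U'\to U$ is finite \'etale, Galois with group $\zz/N$, and $(U',B_{U'})$ with $B_{U'}:=\pi^{-1}(B|_U)$ is an snc log Calabi--Yau pair with $K_{U'}+B_{U'}=\pi^*(K_U+B|_U)\sim 0$. Hence $\omega_{U'}(B_{U'})$ carries a nowhere-vanishing global section $\omega'$, that is, a meromorphic top form with exactly logarithmic poles along $B_{U'}$, and $\zz/N$ acts on $\omega'$ through a character $\chi$ of order exactly $N$. The point is that such an $\omega'$ is canonically attached to an orientation of the dual complex: iterating residues (dlt adjunction) along the components of $B_{U'}$ assigns to each $0$-dimensional stratum a nonzero scalar, well defined only up to the sign of a permutation of the $n$ boundary components meeting there; and along each $1$-dimensional lc center $C$ — which is isomorphic to $\pp^1$ and, being an lc center of a dlt pair, carries a reduced different $\Diff_C$ of degree $2$ — the iterated residue of $\omega'$ is a form on $\pp^1$ with exactly two logarithmic poles, so the scalars at its two endpoints are opposite. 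Transporting this data around a loop in $\mathcal{DMR}(X,B)=\mathcal{DMR}(U,B|_U)$, which by \cite{KX16} is a compact connected pseudomanifold of dimension $n-1$, multiplies it by the product of the intervening reorientation signs, i.e.\ by the value on that loop of the orientation character $w_1\colon\pi_1(\mathcal{DMR}(X,B))\to\{\pm1\}$. Therefore $\chi=w_1$; since the orientation character of a pseudomanifold has order at most $2$, we get $N\mid 2$, that is $2(K_U+B|_U)\sim 0$, hence $2(K_X+B)\sim 0$ on $X$, and then on the original pair by the reduction above.

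The delicate step is precisely the identification $\chi=w_1$ — in particular that the monodromy of $L=\mathcal{O}_X(K_X+B)$ is $\{\pm 1\}$-valued, rather than of higher order. To pin it down I would compute formally at a $0$-dimensional lc center, where $(X,B)$ is toric: there the logarithmic volume form is $\frac{dx_1\wedge\cdots\wedge dx_n}{x_1\cdots x_n}$, and a deck transformation that permutes the $n$ branches by $\sigma$ (and otherwise rescales coordinates) multiplies it exactly by $\operatorname{sgn}(\sigma)$; one then propagates this along a chain of $0$-dimensional lc centers joined through $1$-dimensional ones (possible since $\mathcal{DMR}(X,B)$ is connected) and checks that the sole ambiguity accumulated around a loop is the sign by which that loop reverses a local orientation of the pseudomanifold. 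The remaining ingredients — the dlt reduction, the equality $N=\ord(L)$, the \'etaleness of the cyclic cover, and the final descent — are routine. Finally, Example~\ref{ex:RPN} realizes the case $\mathcal{DMR}(X,B)\cong\mathbb{RP}^{\,n-1}$, where $w_1\neq 1$ and the index is genuinely $2$, showing that the bound is sharp.
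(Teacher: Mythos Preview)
Your strategy---reduce to dlt, take the index-one cyclic cover, and show the Galois character $\chi$ on the trivializing log volume form takes values in $\{\pm 1\}$ by tracking iterated residues to the $0$-dimensional lc centers---is correct and is essentially the paper's alternative proof in \S\ref{sec:residue}. There the argument is packaged slightly differently: one considers the \emph{square} of the residue map $\mathcal{R}^2$, which is honestly Galois-equivariant (no sign ambiguity) and lands in the diagonal line $\kk\cdot(1,\dots,1)\subset\bigoplus_i\kk$, which the permutation action fixes; hence a trivializing section of $2(K_{\widetilde X}+\widetilde B)$ descends. The paper also gives a primary proof that is more topological: on the cover one has $H^{n-1}(\mathcal{D}(\widetilde B),\zz)=\zz$ (orientability of $\mathcal{D}(\widetilde B)$ being equivalent to $K_{\widetilde X}+\widetilde B\sim 0$ by Proposition~\ref{lem:pseudomflddualcomplex}), and the Galois group $\zz/N$ can only act on this copy of $\zz$ through $\{\pm 1\}$.

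Your write-up does have one imprecision worth flagging. You package the conclusion as ``$\chi=w_1$'' for an orientation character $w_1\colon\pi_1(\mathcal{DMR}(X,B))\to\{\pm 1\}$, but this does not type-check ($\chi$ is a character of the Galois group $\zz/N$, not of $\pi_1$ of the dual complex of the base), and more seriously, for a pseudomanifold orientability is \emph{not} governed by $\pi_1$: Example~\ref{ex:pseudo-not-double-cover} exhibits $S\mathbb{RP}^2$, which is simply connected yet non-orientable, so its orientation double cover is necessarily branched and not associated to any $\pi_1$-character. Fortunately your computation does not actually need $w_1$: on the cover $(\widetilde X,\widetilde B)$, the residues of $\omega'$ at any two $0$-dimensional lc centers differ only by a sign (your own $\mathbb{P}^1$-linking observation, which is Lemma~\ref{computationresidue}), and a deck transformation $g$ sends $\omega'$ to $\chi(g)\omega'$ while permuting these centers and introducing at most a sign from reordering branches; combining these two facts gives $\chi(g)\in\{\pm 1\}$ directly, hence $N\mid 2$. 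A minor technical point: working on the snc locus $U$ makes the $\mathbb{P}^1$-linking step awkward, since a $1$-dimensional lc center of $(X,B)$ need not lie entirely in $U$; it is cleaner, as the paper does, to take the quasi-\'etale index-one cover of the proper $X$ and dlt-modify upstairs before running the residue computation.
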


In many circumstances, it is important to deal with pairs with coefficients greater than $\frac{1}{2}$.
For instance, in moduli theory, this implies the flatness of the boundary divisors; see \cite[Theorem 2.82]{Kollar21}.
In the context of log Calabi--Yau pairs of coregularity 0 and large boundary, we obtain the following corollary.

\begin{introcor}\label{cor:frac1/2}
Let $(X,B)$ be a projective log Calabi--Yau pair of coregularity $0$.
Assume that all the coefficients of $B$ are greater than or equal to $\frac{1}{2}$.
Then, the divisor $2B$ is reduced and 
$2(K_X+B)\sim 0$.
In particular, if the coefficients of $B$ are strictly greater than $\frac{1}{2}$, then $B$ is integral and $2(K_X+B)\sim 0$.
\end{introcor}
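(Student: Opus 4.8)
The plan is to deduce the corollary from Theorem~\ref{introthm:main-thm} together with a bound on the coefficients of $B$. Since $K_X$ is a Weil divisor, the Weil index $\lambda$ of $(X,B)$ (Definition~\ref{def:Weilindex}) is the least common multiple of the denominators of the coefficients of $B$; hence, with $\lambda'=\lcm(\lambda,2)$, Theorem~\ref{introthm:main-thm} applied with $\mathbf M=0$ gives $\lambda'(K_X+B)\sim 0$, and the whole statement follows once we show that \emph{every coefficient of $B$ lies in $\{\tfrac12,1\}$}. Indeed, this forces $\lambda\in\{1,2\}$, so $\lambda'=2$ and $2(K_X+B)\sim 0$; it also makes $2B$ a Weil divisor whose fractional-part components all occur with coefficient $1$ after doubling, which is the assertion that $2B$ is reduced; and if moreover every coefficient of $B$ is strictly bigger than $\tfrac12$, then none of them equals $\tfrac12$, so they are all equal to $1$ and $B=B^{=1}$ is integral.

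To prove the coefficient bound I would first pass to a $\qq$-factorial dlt modification $\pi\colon(Y,B_Y)\to(X,B)$: it is crepant, hence again of coregularity $0$, and since it only extracts log canonical places (which appear in $B_Y$ with coefficient $1$) it leaves the coefficients of the strict transforms of the components of $B^{<1}$ unchanged; thus it suffices to prove that every component $S$ of $B_Y^{<1}$ has $\coeff_S(B_Y)=\tfrac12$. Set $b:=\coeff_S(B_Y)\in[\tfrac12,1)$. Two features of the coregularity-$0$ situation will be used. First, $\mathcal D(Y,B_Y^{=1})$ has a simplex of dimension $\dim X-1$, i.e. a $0$-dimensional log canonical stratum; and no such stratum can lie on $S$, because in the snc local model there one of the $\dim X$ components through the point would carry coefficient $b<1$, preventing the point from being a log canonical centre. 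Second, $S$ nonetheless meets $B_Y^{=1}$ (by connectedness of the non-klt locus of the log Calabi--Yau pair $(Y,B_Y)$, after running if necessary an MMP that does not change $\coeff_S$), so one can adjoin repeatedly to components of the reduced boundary meeting the successive images of $S$. Along this chain the coregularity stays $0$, and the coefficient on the image of $S$ stays equal to $b$: the relevant strata are generically snc, so the different is trivial there, and a genuinely new component of any different has coefficient $1-\tfrac1m$ with $m\ge 2$, hence $\ge\tfrac12$ (for $m=1$ it does not appear). One thus arrives at a coregularity-$0$ log Calabi--Yau pair $(\pp^1,B_C)$ all of whose coefficients lie in $[\tfrac12,1]$ and at least one of which equals $1$. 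Since $\deg(K_{\pp^1}+B_C)=0$ forces $\deg B_C=2$, the coefficient multiset of $B_C$ must be $\{1,1\}$ or $\{1,\tfrac12,\tfrac12\}$; as the image of $S$ carries coefficient $b<1$ in $B_C$, we conclude $b=\tfrac12$.

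The main obstacle is the bookkeeping in the second paragraph: making sure that the component $S$ remains ``visible'' with coefficient exactly $b$ through the iterated adjunction, and that one can always continue to a coregularity-$0$ curve. Concretely one must guarantee, at each step, that some reduced boundary component meeting the current image of $S$ is itself contained in a maximal simplex of the dual complex (so that the restricted pair is again of coregularity $0$) and that the image of $S$ meets it in an snc locus (so that the different does not inflate the coefficient). This is plausibly handled by restricting to minimal log canonical centres, or by choosing the dlt modification (and performing further crepant blow-ups) so that $S$ meets the reduced boundary transversally; alternatively one may replace the adjunction chain by a complement-type statement for coregularity-$0$ pairs. Once this is in place the degree computation on $\pp^1$ is elementary, the case $B=B^{=1}$ can instead be quoted from Proposition~\ref{dlt:reduced}, and the remaining bookkeeping with $\lambda$ and $\lambda'$ is routine given Theorem~\ref{introthm:main-thm}.
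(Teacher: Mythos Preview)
Your overall strategy---show that every coefficient of $B$ lies in $\{\tfrac12,1\}$ and then invoke Theorem~\ref{introthm:main-thm}---is exactly the paper's, and the endpoint (a degree computation on $\pp^1$) is the same. The gap is the one you flag yourself: the assertion that ``the coefficient on the image of $S$ stays equal to $b$'' through the adjunction chain is not justified and is in general false. The dlt condition guarantees snc at generic points of \emph{log canonical centres}, but $S$ carries coefficient $b<1$, so $S\cap E$ need not be an lc centre and $Y$ may well be singular there; then $\coeff_{S\cap E}\bigl(\Diff_E(B_Y-E)\bigr)=1-\tfrac1m+\tfrac{b}{m}+\cdots$ with $m\ge 2$ possible. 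After several steps the coefficient lies only in the derived set $D_\Lambda(b)$ of Notation~\ref{not:1}, not at $b$ itself, and your final sentence on $\pp^1$ (``the image of $S$ carries coefficient $b$'') no longer applies.

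The paper resolves this by quoting Lemma~\ref{lem:producing-lcy-p1} (from~\cite{FMP22}), which packages the whole adjunction chain and outputs $(\pp^1,B_{\pp^1})$ with a point $q$ of coefficient $1$, a point $p$ of coefficient in $D_\Lambda(b)$, and all remaining coefficients in $D_\Lambda$. One first checks elementarily that $D_\Lambda\subset\{0\}\cup[\tfrac12,1]$ when $\Lambda\subset[\tfrac12,1]$; the degree-$2$ condition then forces $B_{\pp^1}$ to be supported on $\{p,q\}$ with $\coeff_p=1$, or on $\{p,q,s\}$ with $\coeff_p=\coeff_s=\tfrac12$. Finally one solves the explicit equation
\[
1-\frac{1}{m_p}+\frac{m_0 b+\sum_i m_i\lambda_i}{m_p}\in\Bigl\{\tfrac12,\,1\Bigr\},\qquad m_p,m_i\in\zz_{>0},\ \lambda_i\in\Lambda\cup\{0,1\},
\]
and finds $b\in\{\tfrac12,1\}$. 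So the fix is to replace ``coefficient stays $b$'' by ``coefficient lands in $D_\Lambda(b)$'' and do this slightly longer computation; everything else in your outline goes through.
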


\subsection{Orientability of dual complexes}\label{sec:orientability:intro}
The dual complex $\mathcal{D}(B)$ of a dlt pair $(X,B)$ is a simplicial object encoding the combinatorial information about multiple intersections of irreducible components of $B$; see Definition \ref{def:dualcomplex}.
It has been clear since \cite[\S17]{KX16} that the index of $(X,B)$ is a measure of the orientability of $\mathcal{D}(B)$.
What Corollary \ref{cor_orientability} says is that the index of $(X,B)$ does not contain any information but the orientability of $\mathcal{D}(B)$.

\begin{introcor}
\label{cor_orientability}
Let $(X,B)$ be a proper dlt log Calabi--Yau pair of coregularity 0 and dimension $n+1$ with $B=B\sups =1.$.
Then, there exists an isomorphism of singular and coherent cohomology groups
\[H^{n}(\mathcal{D}(B), \kk) \simeq H^0(X, \mathcal{O}_{X}(K_X + B)),\]
and the following facts hold:
\begin{enumerate}
    \item[(i)]  $K_{X} + B \sim 0$ if and only if $\mathcal{D}(B)$ is an orientable closed pseudo-manifold; and
    \item[(ii)] if $\mathcal{D}(B)$ is not an orientable closed pseudo-manifold, there exists a quasi-\'{e}tale  double cover $(\widetilde{X}, \widetilde{B}) \to (X, B)$ such that $\mathcal{D}(\widetilde{B})$ is an orientable closed pseudo-manifold and $\mathcal{D}(B) \simeq  \mathcal{D}(\widetilde{B})/(\zz/2\zz)$.
\end{enumerate}
\end{introcor}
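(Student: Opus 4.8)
The plan is to build the isomorphism by iterated Poincar\'e residues and then deduce (i)--(ii) from the topology of $\mathcal{D}(B)$ together with the torsion bound $2(K_X+B)\sim 0$ of Proposition~\ref{dlt:reduced}. The first point is that both sides are at most one-dimensional. Indeed, $\mathcal{O}_X(K_X+B)$ is a reflexive sheaf of rank $1$ whose class is $2$-torsion in $\Cl(X)$ by Proposition~\ref{dlt:reduced}, so $h^0(X,\mathcal{O}_X(K_X+B))$ is $1$ if $K_X+B\sim 0$ and $0$ otherwise; and, since $(X,B)$ is dlt of coregularity $0$ and dimension $n+1$, the dual complex $\mathcal{D}(B)$ is a connected closed pseudo-manifold of dimension $n$ --- the ``exactly two top faces through each facet'' holding because the codimension-$n$ strata of $(X,B)$ are $1$-dimensional dlt log Calabi--Yau pairs of coregularity $0$, hence copies of $(\pp^1,p_1+p_2)$ (see Koll\'ar--Xu and de Fernex--Koll\'ar--Xu) --- so that $H^n(\mathcal{D}(B),\kk)$ is $\kk$ if $\mathcal{D}(B)$ is orientable and $0$ otherwise (here $\operatorname{char}\kk\neq2$). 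Thus everything reduces to producing a canonical nonzero map between the two sides, together with the equivalence ``$K_X+B\sim 0$ if and only if $\mathcal{D}(B)$ is orientable''.

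For the map and the easy implication I would argue as follows. A nonzero section of $\mathcal{O}_X(K_X+B)$ is a rational top form $\omega$ with $\dv\omega+B=0$, and taking Poincar\'e residues successively along the strata of $(X,B)$ --- each of which, by adjunction, is again a dlt log Calabi--Yau pair of coregularity $0$ --- attaches to each $0$-dimensional stratum $P_\sigma$, equivalently to each top simplex $\sigma$ of $\mathcal{D}(B)$, a residue $r_\sigma(\omega)\in\kk$, necessarily nonzero because $\dv\omega+B=0$. On each $1$-dimensional stratum $C_\tau\cong\pp^1$, whose two marked points are indexed by the two top simplices through the facet $\tau$, the classical residue theorem gives $r_{\sigma_1}(\omega)+r_{\sigma_2}(\omega)=0$ up to sign; that is, $(r_\sigma(\omega))_\sigma$ is a nowhere-vanishing section of the orientation $\kk$-local system of $\mathcal{D}(B)$. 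This shows at once that $K_X+B\sim 0$ implies $\mathcal{D}(B)$ orientable, and that $\omega\mapsto(r_\sigma(\omega))_\sigma$ is an injection $H^0(X,\mathcal{O}_X(K_X+B))\hookrightarrow H^n(\mathcal{D}(B),\kk)$, since an orientation cochain of a connected closed pseudo-manifold is nonzero in top cohomology.

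The hard implication --- if $\mathcal{D}(B)$ is orientable then $K_X+B\sim 0$, equivalently surjectivity of the residue map --- I would extract from the twisted Mayer--Vietoris (residue) complex
\[
0\to \mathcal{O}_X(K_X+B)\xrightarrow{\ \mathrm{Res}\ }\bigoplus_i \mathcal{O}_{D_i}(K_{D_i}+B_i)\to\bigoplus_{i<j}\mathcal{O}_{D_{ij}}(K_{D_{ij}}+B_{ij})\to\cdots\to\bigoplus_\sigma \mathcal{O}_{P_\sigma}\to 0,
\]
with $B_I=\Diff_{D_I}\!\bigl(B-\textstyle\sum_{i\in I}D_i\bigr)$ and all terms pushed forward from the normalized strata, which should be an exact resolution of $\mathcal{O}_X(K_X)$. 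In its hypercohomology spectral sequence all rows past the first are concentrated in degree $0$ --- because every stratum is again coregularity-$0$ log Calabi--Yau, so the intermediate sheaves have no higher cohomology and their $H^0$ reproduces the simplicial cochains of $\mathcal{D}(B)$ --- while the first differential is the simplicial coboundary of $\mathcal{D}(B)$; inspecting the relevant edge map (and using Serre duality on the Cohen--Macaulay space $X$) then shows that an orientation cocycle of $\mathcal{D}(B)$ lifts to a necessarily nowhere-vanishing global section of $\mathcal{O}_X(K_X+B)$, whence $K_X+B\sim 0$ and the residue map is an isomorphism. I expect the main obstacle to lie exactly here: proving exactness of the residue complex when $(X,B)$ is only dlt (not simple normal crossing), which I would reduce to the snc case by passing to a dlt modification or log resolution and descending, and proving the vanishing $H^{>0}\bigl(D_I,\mathcal{O}_{D_I}(K_{D_I}+B_I)\bigr)=0$ on every positive-dimensional stratum, which should follow by induction on dimension from the structure of coregularity-$0$ log Calabi--Yau pairs (e.g.\ via Koll\'ar-type torsion-freeness, or because each such group is squeezed between one-dimensional spaces).

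Finally, (i) is the conjunction of the two implications. For (ii), suppose $\mathcal{D}(B)$ is not orientable; by (i) and Proposition~\ref{dlt:reduced} the class $K_X+B$ is then a nontrivial $2$-torsion element of $\Cl(X)$, so its associated index-one cover $\pi\colon(\widetilde X,\widetilde B)\to(X,B)$ is a connected quasi-\'etale double cover with $\widetilde B=\pi^{*}B$ reduced and $K_{\widetilde X}+\widetilde B=\pi^{*}(K_X+B)\sim 0$; since quasi-\'etale covers preserve being dlt log Calabi--Yau of coregularity $0$, part (i) applies to $(\widetilde X,\widetilde B)$ and shows that $\mathcal{D}(\widetilde B)$ is an orientable closed pseudo-manifold. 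As $\pi$ is quasi-\'etale of degree $2$ it induces (compatibly with dlt modifications) a connected degree-$2$ covering $\mathcal{D}(\widetilde B)\to\mathcal{D}(B)$, and a connected double cover with orientable total space over a non-orientable closed pseudo-manifold is necessarily the orientation double cover; hence $\mathcal{D}(B)\simeq\mathcal{D}(\widetilde B)/(\zz/2\zz)$, with $\zz/2\zz$ acting as the deck group, which completes the proof.
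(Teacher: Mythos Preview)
Your residue construction for the easy implication and for the map itself is correct and in fact matches the paper's \emph{alternative} approach (\S\ref{sec:residue}, suggested by Koll\'ar): the iterated Poincar\'e residues land in the orientation local system of $\mathcal{D}(B)$ exactly as you describe. However, there are two genuine problems.

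\medskip
\textbf{The hard implication.} Your argument that orientability of $\mathcal{D}(B)$ implies $K_X+B\sim 0$ is only a sketch, and you flag this yourself: exactness of the dlt residue complex and the vanishing $H^{>0}\bigl(D_I,\mathcal{O}_{D_I}(K_{D_I}+B_I)\bigr)=0$ on all positive-dimensional strata are both left open. The paper avoids this entirely. Its proof of Proposition~\ref{lem:pseudomflddualcomplex} uses the short exact sequence
\[
0\to\mathcal{O}_X(-B)\to\mathcal{O}_X\to\mathcal{O}_B\to 0,
\]
together with the fact that a coregularity-$0$ log Calabi--Yau pair has rationally connected $X$ \cite[\S18]{KX16}, so $H^n(X,\mathcal{O}_X)=H^{n+1}(X,\mathcal{O}_X)=0$; Serre duality then gives
\[
H^n(B,\mathcal{O}_B)\;\simeq\;H^{n+1}(X,\mathcal{O}_X(-B))\;\simeq\;H^0(X,\mathcal{O}_X(K_X+B)).
\]
Finally $H^n(\mathcal{D}(B),\kk)\simeq H^n(B,\mathcal{O}_B)$ by \cite[Lemma~25]{KX16} and \cite[Proposition~A.7]{KLSV2018}. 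This two-line argument replaces your spectral sequence; it is worth knowing, because rational connectedness of $X$ is the structural input that makes the stratum-by-stratum vanishing you wanted automatic.

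\medskip
\textbf{Part (ii).} Your claim that the induced map $\mathcal{D}(\widetilde B)\to\mathcal{D}(B)$ is a \emph{topological} double cover, and hence ``necessarily the orientation double cover'', is not valid: the paper explicitly warns (just after the statement of Corollary~\ref{cor_orientability}, and in Example~\ref{ex:pseudo-not-double-cover}) that this quotient map can be branched, and that non-orientable closed pseudo-manifolds need not admit any unbranched orientable double cover. The correct argument is purely equivariant: the $\zz/2\zz$ deck group of the quasi-\'etale index-one cover acts on the set of strata of $(\widetilde X,\widetilde B)$ and hence on $\mathcal{D}(\widetilde B)$ by simplicial automorphisms, and by \cite[\S17]{KX16} (or Proposition~\ref{prop:dualcomplexquotient}) the quotient is $\mathcal{D}(B)$. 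No covering-space theory is needed, and none is available. As a minor point, quasi-\'etale covers of dlt pairs are only log canonical in general; one should first take a dlt modification if the dlt property of $(\widetilde X,\widetilde B)$ is required downstream.
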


We refer the reader to Definition~\ref{defn:pseudo}
and Definition~\ref{defn:orient} for the orientability of pseudo-manifolds.
In the previous corollary, quasi-\'etale means \'etale in codimension $1$.
Note that the quotient map $\mathcal{D}(\tilde{B})\rightarrow \mathcal{D}(B)$ in Corollary \ref{cor_orientability}
is not necessarily a topological covering space.
In fact, closed pseudo-manifolds admit no orientable topological covering spaces in general, and branched orientation covers are the best we can expect; see Example~\ref{ex:pseudo-not-double-cover} and~\cite[\S5.2]{Matthews}.

\subsection{Mirror symmetry}\label{sec:ms}
Mirror symmetry predicts a relation between pairs of Calabi--Yau varieties.
The goal is to develop a general mechanism to produce mirror pairs and describe
how this duality exchanges geometric structures through the two sides of the mirror.
Various approaches have been developed to achieve this purpose, e.g., the SYZ conjecture \cite{SYZ01}, the Gross--Siebert program \cite{Gross13}, and the Kontsevich--Soibelman program \cite{KS06}.
In all these programs, one tries to degenerate the Calabi--Yau variety to a simple normal crossing Calabi--Yau variety
of coregularity $0$, also known as \emph{large complex limit} or \emph{maximal degeneration}. 
Here, we show that the Calabi--Yau varieties $X$ 
that appear in a maximal degeneration have index at most $2$, namely $2K_{X} \sim 0$.\footnote{Here, a Calabi--Yau variety that appears in a maximal degeneration satisfies $K_{X} \sim_{\qq}0$, while some authors require $K_{X} \sim 0$. Our theorems show that $K_{X} \sim 0$ or $2K_{X} \sim 0$ are automatic.}

\begin{introthm}\label{introcor-GS}
Let $\pi \colon \mathcal{X} \rar C$ be a minimal family of Calabi--Yau varieties of coregularity $0$ over $c_0 \in C$.
Let $U$ be the maximal open subset over which the family is locally stable.
Then, for every $c\in U$, we have that
$2K_{\mathcal{X}_c}\sim 0$.
\end{introthm}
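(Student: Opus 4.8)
The plan is to settle the statement on the central fibre $\mathcal{X}_{c_0}$ using the results already obtained, and then to spread it over all of $U$ using that $\pi$ is locally stable. After shrinking $C$ around $c_0$ we may assume that $C$ is a smooth affine curve, so that every fibre $\mathcal{X}_c$ is a Cartier divisor on $\mathcal{X}$; by construction of the minimal family, $c_0\in U$. The first point to record is that $\mathcal{X}_{c_0}$, endowed with its natural semi-log canonical structure, is a projective semi-log canonical log Calabi--Yau pair of coregularity $0$ with reduced (in fact empty) boundary: being the central fibre of a minimal model of a one-parameter degeneration it is reduced and semi-log canonical, and $K_{\mathcal{X}_{c_0}}\sim_{\qq}0$. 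The semi-log canonical analogue of Theorem~\ref{introthm:main-thm} --- equivalently, the semi-log canonical version of Proposition~\ref{dlt:reduced} --- then gives $2K_{\mathcal{X}_{c_0}}\sim 0$.

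Next I would propagate this. Since $\pi$ is locally stable over $U$, the reflexive power $\omega_{\mathcal{X}/U}^{[2]}$ is flat over $U$, commutes with base change (restricting on each fibre to $\mathcal{O}_{\mathcal{X}_c}(2K_{\mathcal{X}_c})$), and $\pi_*\omega_{\mathcal{X}/U}^{[m]}$ is locally free and base-change compatible for every $m$. From $\omega_{\mathcal{X}/U}^{[2]}|_{\mathcal{X}_{c_0}}\cong\mathcal{O}_{\mathcal{X}_{c_0}}$ and the fact that $\mathcal{X}_{c_0}$ is Cartier, the sheaf $\omega_{\mathcal{X}/U}^{[2]}$ is invertible in a neighbourhood of $\mathcal{X}_{c_0}$ --- a reflexive sheaf of rank one whose restriction to a Cartier divisor is invertible is invertible near that divisor --- and, by properness of $\pi$, over $\mathcal{X}_{U_0}$ for some open $U_0\ni c_0$. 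A local generator of $\pi_*\omega_{\mathcal{X}/U}^{[2]}$ near $c_0$ then restricts to a nonzero section of $\mathcal{O}_{\mathcal{X}_c}(2K_{\mathcal{X}_c})$ on each nearby fibre, and since an effective Weil divisor that is $\qq$-linearly trivial must vanish, this section is nowhere zero; hence $2K_{\mathcal{X}_c}\sim 0$ for all $c$ in a neighbourhood of $c_0$.

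To reach every $c\in U$, set $g(c):=h^0(\mathcal{X}_c,\mathcal{O}_{\mathcal{X}_c}(2K_{\mathcal{X}_c}))$. As $2K_{\mathcal{X}_c}\sim_{\qq}0$ for every $c\in U$ (abundance for Calabi--Yau varieties at the normal fibres, and the defining hypothesis at $c_0$), we have $g\le 1$, with $g(c)=1$ precisely when $2K_{\mathcal{X}_c}\sim 0$. By upper semicontinuity of $g$ the set $V:=\{c\in U:2K_{\mathcal{X}_c}\sim 0\}=\{c\in U:g(c)=1\}$ is closed; it is open by the argument of the previous paragraph applied at each of its points; and it contains $c_0$. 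Since $C$ is an irreducible curve, $U$ is connected, so $V=U$, which is the assertion.

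The one delicate point is the openness of $V$, i.e. that triviality of $\omega_{\mathcal{X}/U}^{[2]}$ along a fibre propagates to nearby fibres; for an arbitrary line bundle this fails, the obstruction to extending the trivialization over an infinitesimal neighbourhood of $\mathcal{X}_{c_0}$ lying in $H^1(\mathcal{X}_{c_0},\mathcal{O}_{\mathcal{X}_{c_0}})$, which for a coregularity-$0$ pair can be nonzero (it is computed by the first cohomology of the dual complex of a dlt model of $\mathcal{X}_{c_0}$). What makes the propagation work is precisely that $\omega_{\mathcal{X}/U}^{[2]}$ is the reflexive square of the relative canonical of a \emph{locally stable} family, for which the push-forwards $\pi_*\omega_{\mathcal{X}/U}^{[m]}$ are base-change compatible and the index of $K_{\mathcal{X}_c}$ is deformation invariant --- this is the only place where the full force of ``locally stable'', rather than merely ``$\qq$-Gorenstein'', is used. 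A variant, trading this input for some bookkeeping, is to pass over a neighbourhood of $\mathcal{X}_{c_0}$ to the double cover of Corollary~\ref{cor_orientability} (the orientation cover of the dual complex), which is again locally stable and has $K\sim 0$ on its central fibre, and then spread triviality of the relative canonical sheaf itself.
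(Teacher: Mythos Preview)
Your argument has one genuine gap: you assert that ``by construction of the minimal family, $c_0\in U$'' and that $\mathcal{X}_{c_0}$ is reduced and semi-log canonical. Neither follows from Definition~\ref{defn:minimalmaximal}. That definition only requires $K_{\mathcal{X}}+\mathcal{X}_{c_0,\mathrm{red}}\sim_{\qq,C}0$ and that $(\mathcal{X},\mathcal{X}_{c_0,\mathrm{red}})$ be log canonical; the scheme-theoretic fibre $\mathcal{X}_{c_0}$ may well be non-reduced, in which case $(\mathcal{X},\mathcal{X}_{c_0})$ is not log canonical and $\pi$ is not locally stable at $c_0$. So your invocation of Corollary~\ref{introthm:slc} at the central fibre, and all of the subsequent spreading via $\omega_{\mathcal{X}/U}^{[2]}$, is applied to a point that may lie outside $U$.

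The paper fills this gap with Lemma~\ref{GS_lemma}: after a finite base change $C'\to C$ totally ramified at $c_0$ (weak semi-stable reduction), the normalized pull-back $(\mathcal{Y},\mathcal{D})\to C'$ has reduced central fibre and is locally stable at $c_0'$, while every fibre over the original $U$ reappears as some fibre of $(\mathcal{Y},\mathcal{D})$; hence it suffices to prove the statement for $(\mathcal{Y},\mathcal{D})$. Once this reduction is made, the remainder of your proof is essentially the paper's: apply Corollary~\ref{introthm:slc} at the (now reduced slc) central fibre, use base-change compatibility of $\omega^{[2]}_{\mathcal{X}/C}$ for locally stable families and Nakayama's lemma to make $2K_{\mathcal{X}/C}$ Cartier near $\mathcal{X}_{c_0}$, and then spread to all fibres via semicontinuity of $h^0$ together with Lemma~\ref{lemma_Weil_torsion}. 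Your open--closed packaging is a cosmetic variant of the paper's direct semicontinuity argument.
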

See Definition~\ref{defn:minimalmaximal} for the notion of a minimal family of Calabi--Yau varieties of coregularity $0$ over a point, and Definition~\ref{def:locallystable} 
for the notion of local stability of a family.
Theorem \ref{introcor-GS} is a special case of a more general statement regarding Calabi--Yau pairs; see Theorem \ref{introcor-GS-pairs}.

\begin{introcor}\label{cor:nonorientamirror}
Let $\pi \colon \mathcal{X} \rar C$ be a minimal family of Calabi--Yau varieties of coregularity $0$ over $c_0 \in C$ of relative dimension $n$. Suppose that the pair $(\mathcal{X}, \mathcal{X}_{c_0})$ is dlt. 
Then we have
\begin{equation}\label{eq:identidualcomplex}
    H^{n}(\mathcal{D}(\mathcal{X}_{c_0}), \kk) \simeq H^0(\mathcal{X}_{c}, \mathcal{O}_{\mathcal{X}_{c}}(K_{\mathcal{X}_{c}})).
\end{equation}
In particular, if $K_{\mathcal{X}_c} \not\sim 0$, then $\mathcal{D}( \mathcal{X}_{c_0})$ is not orientable.
\end{introcor}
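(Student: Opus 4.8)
The plan is to recognize the total space of the family as a log Calabi--Yau pair, apply Corollary \ref{cor_orientability} to it, and then transport the resulting cohomology of a line bundle from $\mathcal{X}$ down to a general fibre. First I would unwind Definition \ref{defn:minimalmaximal}: a minimal family of Calabi--Yau varieties of coregularity $0$ over $c_0$, together with the hypothesis that $(\mathcal{X},\mathcal{X}_{c_0})$ is dlt, exhibits $(\mathcal{X},\mathcal{X}_{c_0})$ as a proper dlt log Calabi--Yau pair of coregularity $0$ and dimension $n+1$ whose boundary $\mathcal{X}_{c_0}=\mathcal{X}_{c_0}^{=1}$ is reduced. (If the base is not proper, or $\mathcal{X}$ is not yet of this shape, one first compactifies $C$ and runs a relative MMP; by the general results on dual complexes this changes neither the general fibre $\mathcal{X}_c$ for $c\in U$ nor the homotopy type of $\mathcal{D}(\mathcal{X}_{c_0})$.) Then Corollary \ref{cor_orientability}, applied to $(\mathcal{X},\mathcal{X}_{c_0})$, yields
\[
H^{n}(\mathcal{D}(\mathcal{X}_{c_0}),\kk)\;\simeq\;H^0\bigl(\mathcal{X},\mathcal{O}_{\mathcal{X}}(K_{\mathcal{X}}+\mathcal{X}_{c_0})\bigr).
\]

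The core of the proof is the identification $H^0(\mathcal{X},\mathcal{O}_{\mathcal{X}}(K_{\mathcal{X}}+\mathcal{X}_{c_0}))\simeq H^0(\mathcal{X}_c,\mathcal{O}_{\mathcal{X}_c}(K_{\mathcal{X}_c}))$ for $c\in U$ general. Since $K_{\mathcal{X}}+\mathcal{X}_{c_0}\sim_{\qq}0$, both sides are either $\kk$ or $0$, so it suffices to show $K_{\mathcal{X}}+\mathcal{X}_{c_0}\sim 0$ if and only if $K_{\mathcal{X}_c}\sim 0$. The forward direction is adjunction: $\mathcal{X}_c$ is Cartier with trivial normal bundle and is disjoint from $\mathcal{X}_{c_0}$, so $(K_{\mathcal{X}}+\mathcal{X}_{c_0})|_{\mathcal{X}_c}\sim K_{\mathcal{X}_c}$. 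For the converse, Proposition \ref{dlt:reduced} (equivalently Theorem \ref{introthm:main-thm}) gives $2(K_{\mathcal{X}}+\mathcal{X}_{c_0})\sim 0$; if $K_{\mathcal{X}}+\mathcal{X}_{c_0}\not\sim 0$, then Corollary \ref{cor_orientability}(ii) furnishes a connected quasi-\'etale double cover $p\colon(\widetilde{\mathcal{X}},\widetilde{\mathcal{X}_{c_0}})\to(\mathcal{X},\mathcal{X}_{c_0})$ with $\mathcal{D}(\widetilde{\mathcal{X}_{c_0}})$ a connected closed pseudo-manifold. Connectedness of $\mathcal{D}(\widetilde{\mathcal{X}_{c_0}})$ excludes the possibility that $p$ is pulled back from an \'etale double cover of $C$; hence $\widetilde{\mathcal{X}}\to C$ has connected general fibres and $p$ restricts over general $c$ to a nontrivial connected quasi-\'etale double cover $\widetilde{\mathcal{X}_c}\to\mathcal{X}_c$, so the torsion class $(K_{\mathcal{X}}+\mathcal{X}_{c_0})|_{\mathcal{X}_c}\sim K_{\mathcal{X}_c}$ is nonzero, contradicting $K_{\mathcal{X}_c}\sim 0$. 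This establishes the displayed isomorphism of the statement.

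Finally, for the ``in particular'' clause: if $K_{\mathcal{X}_c}\not\sim 0$, then since $2K_{\mathcal{X}_c}\sim 0$ by Theorem \ref{introcor-GS}, any nonzero section of $\mathcal{O}_{\mathcal{X}_c}(K_{\mathcal{X}_c})$ would force $K_{\mathcal{X}_c}\sim 0$; hence $H^0(\mathcal{X}_c,\mathcal{O}_{\mathcal{X}_c}(K_{\mathcal{X}_c}))=0$, so $H^n(\mathcal{D}(\mathcal{X}_{c_0}),\kk)=0$, and as $\mathcal{D}(\mathcal{X}_{c_0})$ is a closed connected pseudo-manifold of dimension $n$ this means it is not orientable. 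The step I expect to be the genuine obstacle is the transfer in the middle paragraph: showing that the restriction $H^0(\mathcal{X},\mathcal{O}_{\mathcal{X}}(K_{\mathcal{X}}+\mathcal{X}_{c_0}))\to H^0(\mathcal{X}_c,\mathcal{O}_{\mathcal{X}_c}(K_{\mathcal{X}_c}))$ is an isomorphism, i.e.\ that the $2$-torsion class $K_{\mathcal{X}}+\mathcal{X}_{c_0}$ survives restriction to a general fibre. This requires the careful connectedness analysis of the orientation double cover over the curve $C$, together with a check — using local stability on $U$ and that the branch locus has codimension at least two — that the cover remains quasi-\'etale after restriction to a general (klt) fibre.
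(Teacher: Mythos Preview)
Your approach diverges from the paper's and contains a genuine gap at the very first step. You want to apply Corollary~\ref{cor_orientability} to the total space $(\mathcal{X},\mathcal{X}_{c_0})$, but that corollary requires a \emph{proper} (in fact projective) log Calabi--Yau pair with $K_X+B\sim_\qq 0$ absolutely. Definition~\ref{defn:minimalmaximal} only gives $K_{\mathcal{X}}+\mathcal{X}_{c_0}\sim_{\qq,C}0$ relatively, and $\mathcal{X}$ is typically not proper. Your parenthetical fix---compactify $C$ and run an MMP---does not repair this: to keep the compactified pair log canonical and relatively trivial you must add reduced vertical boundary over every new point of $\bar C\setminus C$, which disconnects the dual complex and destroys the identification with $\mathcal{D}(\mathcal{X}_{c_0})$; and even then the absolute condition $K_{\bar{\mathcal{X}}}+\bar B\sim_\qq 0$ amounts to vanishing of the moduli part in the canonical bundle formula, which fails in general for a genuinely varying family of Calabi--Yau varieties. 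So Corollary~\ref{cor_orientability} is simply unavailable here, and your first displayed isomorphism is unjustified. The subsequent double-cover argument in your middle paragraph, while plausible in spirit, is therefore built on sand.

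The paper avoids all of this by never passing through the total space (see the proof of Corollary~\ref{cor:DMRorientable}, of which the present statement is declared a special case). It works entirely fibrewise:
\[
H^n(\mathcal{D}(\mathcal{X}_{c_0}),\kk)\;\simeq\;H^n(\mathcal{X}_{c_0},\O \mathcal{X}_{c_0}.)\;\simeq\;H^n(\mathcal{X}_c,\O \mathcal{X}_c.)\;\simeq\;H^0(\mathcal{X}_c,\O \mathcal{X}_c.(K_{\mathcal{X}_c})).
\]
The first isomorphism is \cite[Lemma~25]{KX16} and its dlt extension \cite[Proposition~A.7]{KLSV2018}; the second is invariance of $h^i(-,\O -.)$ in a flat projective family with du Bois fibres \cite[Theorem~2.62]{Kollar21}; the third is Serre duality on the klt, hence Cohen--Macaulay, general fibre. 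None of these three steps requires the total space to be proper or absolutely log Calabi--Yau. The orientability conclusion then follows exactly as in your last paragraph.
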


We can improve the estimate on the index in Theorem~\ref{introcor-GS} by imposing additional hypotheses on the special fiber, which are natural, especially from the point of view of the Gross--Siebert program.

\begin{introthm}\label{introcor-GS2}
Let $\pi \colon \mathcal{X} \rar C$ be a minimal family of Calabi--Yau varieties of coregularity $0$ over $c_0 \in C$. Assume that
\begin{itemize}
    \item the fiber $\mathcal{X}_{c_0}$ is reduced and toric simple normal crossing, i.e., a union of smooth toric varieties intersecting along toric strata; and
    \item the dual complex $\mathcal{D}(\mathcal{X}_{c_0})$ is simply connected.
\end{itemize}
Let $U$ be the maximal open subset over which the family is locally stable.
Then, for every $c\in U$, we have that
$K_{\mathcal{X}_c}\sim 0$.
\end{introthm}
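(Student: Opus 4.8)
The plan is to deduce Theorem~\ref{introcor-GS2} from Theorem~\ref{introcor-GS} together with the orientability dichotomy in Corollary~\ref{cor_orientability}, by showing that under the two hypotheses the dual complex $\mathcal{D}(\mathcal{X}_{c_0})$ must be an orientable closed pseudo-manifold. First, I would reduce to the dlt setting: after possibly shrinking $C$ around $c_0$ and running a relative dlt modification, one arranges that $(\mathcal{X}, \mathcal{X}_{c_0})$ is a dlt log Calabi--Yau pair of coregularity $0$ with reduced special fiber, without changing the generic fiber $\mathcal{X}_c$ for $c \in U$ and without changing the \emph{homeomorphism} type of $\mathcal{D}(\mathcal{X}_{c_0})$ (dual complexes of dlt modifications of an lc pair are PL-homeomorphic, so simple connectivity is preserved). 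The toric snc hypothesis on the special fiber will be used to guarantee that the dual complex is genuinely a simplicial pseudo-manifold of the expected dimension $n$.

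The core geometric input is that $\mathcal{D}(\mathcal{X}_{c_0})$ is a closed pseudo-manifold of dimension $n$: this is where the coregularity-$0$ condition enters (it forces the top dimension $n = \dim \mathcal{X}_c$), and where one invokes the structure theory of dlt log CY pairs — every codimension-one face of $\mathcal{D}$ lies in exactly two top faces, and the dual complex is connected in codimension one. Once we know $\mathcal{D}(\mathcal{X}_{c_0})$ is a closed pseudo-manifold, orientability is a purely topological statement. A closed connected pseudo-manifold $P$ of dimension $n$ is non-orientable precisely when its orientation double cover is connected and nontrivial; equivalently, when $H_1(P;\zz/2\zz) \to \zz/2\zz$ detecting the first Stiefel--Whitney-type class is nonzero. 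But if $P$ is simply connected, then $\pi_1(P) = 1$, so there are no nontrivial connected covers at all, and in particular the orientation double cover splits; hence $P$ is orientable. (One must be a little careful: for pseudo-manifolds the relevant ``orientation cover'' may be branched rather than a genuine topological cover, as the paper's Example~\ref{ex:pseudo-not-double-cover} stresses — but simple connectivity of $P$ still forces triviality of the associated $\zz/2\zz$-local system on the top stratum, since that local system extends the one coming from an honest cover of the manifold locus and its monodromy factors through $\pi_1(P)$.)

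Combining the pieces: by the previous paragraph $\mathcal{D}(\mathcal{X}_{c_0})$ is an orientable closed pseudo-manifold, so by Corollary~\ref{cor_orientability}(i) applied to the dlt pair $(\mathcal{X}, \mathcal{X}_{c_0})$ restricted near $c_0$ — or more directly by the isomorphism in Corollary~\ref{cor:nonorientamirror}, which gives $H^0(\mathcal{X}_c, \mathcal{O}_{\mathcal{X}_c}(K_{\mathcal{X}_c})) \simeq H^n(\mathcal{D}(\mathcal{X}_{c_0}), \kk) \neq 0$ — we conclude $K_{\mathcal{X}_c} \sim 0$ rather than merely $2K_{\mathcal{X}_c} \sim 0$. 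For $c \in U$ the fiber is a klt (indeed canonical) Calabi--Yau variety, so $H^0(\mathcal{O}(K_{\mathcal{X}_c}))$ being nonzero together with $K_{\mathcal{X}_c} \sim_{\qq} 0$ forces the effective torsion divisor in $|K_{\mathcal{X}_c}|$ to be $0$, i.e.\ $K_{\mathcal{X}_c} \sim 0$, as desired.

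The main obstacle I anticipate is the topological lemma of the middle paragraph: making precise the claim that simple connectivity of a closed pseudo-manifold implies orientability when the naive orientation double cover need not be a topological covering space. The honest argument is that the orientation $\zz/2\zz$-local system lives on the smooth locus $P^\circ = P \setminus P^{(n-2)}$, and by the codimension-one connectivity of a pseudo-manifold the inclusion $P^\circ \hookrightarrow P$ induces a \emph{surjection} $\pi_1(P^\circ) \twoheadrightarrow \pi_1(P)$ with the property that the orientation character $\pi_1(P^\circ) \to \zz/2\zz$ factors through this surjection (crossing a codimension-one stratum, being shared by exactly two top simplices, does not flip orientation if these have been coherently... — this is exactly the coherence one needs to verify). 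Hence if $\pi_1(P) = 1$ the orientation character is trivial, and $P$ is orientable. I would either prove this directly using the simplicial structure of $\mathcal{D}$ or cite the relevant statement from the paper's Section on orientability (around Definitions~\ref{defn:pseudo}--\ref{defn:orient}); everything else — the dlt reduction, the pseudo-manifold property, and the final numerical step — is standard given the results already established in the excerpt.
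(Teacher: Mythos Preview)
Your central topological lemma—that a simply connected closed pseudo-manifold is orientable—is false, and the paper's own Example~\ref{ex:pseudo-not-double-cover} is the counterexample. The suspension $S\mathbb{RP}^2$ is a simply connected closed $3$-pseudo-manifold (any suspension of a connected space is simply connected), yet $H_3(S\mathbb{RP}^2;\zz)=H_2(\mathbb{RP}^2;\zz)=0$, so it is not orientable. Your attempted justification fails precisely here: removing the codimension-$2$ skeleton leaves $P^\circ \simeq \mathbb{RP}^2 \times (-1,1)$ with $\pi_1(P^\circ)\cong\zz/2\zz$, and the orientation character $\pi_1(P^\circ)\to\zz/2\zz$ is the nontrivial map, which manifestly does \emph{not} factor through $\pi_1(P)=1$. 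So the step ``the orientation character factors through $\pi_1(P^\circ)\twoheadrightarrow\pi_1(P)$'' is exactly what breaks for pseudo-manifolds; you flagged the branched-cover subtlety but then assumed it away.

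Relatedly, you mis-assign the role of the toric snc hypothesis: the pseudo-manifold property of $\mathcal{D}(\mathcal{X}_{c_0})$ follows already from the dlt log Calabi--Yau coregularity-$0$ structure and needs no toric input. Your route can be salvaged, but only by using the toric hypothesis \emph{correctly}: since each component $\Delta_i$ is smooth projective toric with toric boundary, the link of the corresponding vertex in $\mathcal{D}(\mathcal{X}_{c_0})$ is a sphere, and by induction on strata all face links are spheres; hence $\mathcal{D}(\mathcal{X}_{c_0})$ is a genuine closed PL manifold, for which simple connectivity does imply orientability. The paper takes a different route: rather than proving orientability of the dual complex, it uses the toric hypothesis to show the index-one cover of $(\mathcal{X},\mathcal{X}_{c_0})$ restricts to an honest \'etale cover of $\mathcal{X}_{c_0}$, and then combines $\pi_1(\Delta_i)=1$ (smooth toric) with $\pi_1(\mathcal{D}(\mathcal{X}_{c_0}))=1$ to get $\pi_1^{\text{\'et}}(\mathcal{X}_{c_0})=1$ via \cite[Lemma 26]{KX16}, forcing the cover to be trivial.
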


\subsection{Index of semi-log canonical pairs} 

In order to establish 
the applications to mirror symmetry in \S\ref{sec:ms},
we will need to prove a version
of Theorem~\ref{introthm:main-thm}
for semi-log canonical pairs; see \cite[\S5]{Kol13} for a definition of these pairs. 
The key idea
is to produce a section
on a suitable normal modification of the pair
which is invariant under certain birational transformations.
These sections are called \emph{admissible}; see Definition~\ref{def:admissible}.
We obtain the following Theorem \ref{introthm:admissible}
regarding the existence of 
admissible sections on
log Calabi--Yau pairs of coregularity $0$. To this end, given a pair $(X, B)$, we denote by ${\rm Bir}(X,B)$ the group of crepant birational automorphisms of $(X,B)$; see also~\cite[Definition 1.1]{HX16}.

\begin{introthm}\label{introthm:admissible}
Let $(X,B)$ be a projective 
log Calabi--Yau
pair of coregularity $0$ and Weil index $\lambda$. Set $\lambda'=\lcm(\lambda,2)$.
Then, there is a non-trivial section
$s\in H^0(X,\mathcal{O}_X(\lambda'(K_X+B)))$
such that $g^*s=s$ for every $g\in {\rm Bir}(X,B)$.
\end{introthm}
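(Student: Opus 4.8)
The plan is to reduce to the reduced coregularity-zero case handled in Proposition~\ref{dlt:reduced} (and its dlt refinement behind Corollary~\ref{cor_orientability}), and then upgrade the resulting section to one that is invariant under all crepant birational automorphisms. First I would pass to a $\qq$-factorial dlt modification $f\colon (Y,B_Y) \to (X,B)$; since $f$ is crepant, $K_Y+B_Y = f^*(K_X+B)$, and pushforward of sections identifies $H^0(Y,\mathcal{O}_Y(\lambda'(K_Y+B_Y)))$ with $H^0(X,\mathcal{O}_X(\lambda'(K_X+B)))$. The coregularity is a birational invariant of the pair, so $(Y,B_Y)$ still has coregularity $0$. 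Writing $B_Y = B_Y^{=1} + B_Y^{<1}$, the ideas behind Proposition~\ref{dlt:reduced} applied to the reduced part (together with the bookkeeping that produces the factor $\lambda=\lambda(X,B)$ coming from the fractional coefficients, exactly as in the proof of Theorem~\ref{introthm:main-thm}) yield a nonzero section $s_Y \in H^0(Y,\mathcal{O}_Y(\lambda'(K_Y+B_Y)))$. By Corollary~\ref{cor_orientability}, up to passing to a quasi-\'etale double cover on the reduced locus, $s_Y$ can be taken to correspond to an orientation of $\mathcal{D}(B_Y^{=1})$ (which becomes a closed orientable pseudo-manifold of dimension $n = \dim X - 1$); this geometric description is what I will exploit for invariance.

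\textbf{From sections to invariance.} The key point is that a crepant birational automorphism $g \in \mathrm{Bir}(X,B)$ lifts, after possibly replacing $Y$ by a common dlt modification dominating both $Y$ and $g_*Y$, to a birational map $\tilde g \colon Y \dashrightarrow Y$ which is an isomorphism in codimension $1$ and preserves $B_Y$; in particular it induces a simplicial automorphism $g_* $ of the dual complex $\mathcal{D}(B_Y^{=1})$. Under the isomorphism $H^n(\mathcal{D}(B_Y^{=1}),\kk) \simeq H^0(Y,\mathcal{O}_Y(K_Y+B_Y^{=1}))$ of Corollary~\ref{cor_orientability}, the section $s_Y$ corresponds to a fundamental cohomology class $[\omega]$, and $g$ acts on this one-dimensional space by $\pm 1$, i.e. by the degree of $g_*$ as a self-map of the pseudo-manifold. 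So either $g^*s_Y = s_Y$ already, or $g^*s_Y = -s_Y$; after squaring (which is harmless since we only need some nonzero $\mathrm{Bir}$-invariant section and can replace the line bundle representative accordingly, or equivalently since $-1$ has order $2$ and we may average) we obtain a section fixed by $g$. The subtlety is to do this \emph{simultaneously} for all $g$: I would argue that the subgroup of $\mathrm{Bir}(X,B)$ acting by $-1$ on the line $H^0(Y,\mathcal{O}_Y(\lambda'(K_Y+B_Y)))$ has index $1$ or $2$, so that replacing $s_Y$ by $s_Y \otimes s_Y$ (a section of twice the bundle) does not suffice in general — rather one must note that $\lambda'$ is already even, $\lambda' = \lcm(\lambda,2)$, and the chosen $s_Y$ can be taken as a square root up to the $\zz/2\zz$-cover of Corollary~\ref{cor_orientability}(ii), on which the pulled-back complex \emph{is} orientable and hence the action is trivial; descending back gives a canonical, and therefore $\mathrm{Bir}$-invariant, section. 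Finally push forward along $f$ to land in $H^0(X,\mathcal{O}_X(\lambda'(K_X+B)))$.

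\textbf{Main obstacle.} The hard part will be the simultaneity and well-definedness of the lift $Y \dashrightarrow Y$: different $g$'s require different common modifications, and one must check that the section produced is independent of these choices so that the whole group $\mathrm{Bir}(X,B)$ fixes a \emph{single} $s$. This is precisely the content of the notion of \emph{admissible section} (Definition~\ref{def:admissible}): a section defined compatibly on \emph{all} dlt modifications of the pair, so that no choice is involved. Thus the real work is to show that the section $s_Y$ built above is admissible in this sense — equivalently, that the cohomology class $[\omega] \in H^n(\mathcal{D}(B_Y^{=1}),\kk)$ is intrinsic to the crepant birational equivalence class of $(X,B)$, which follows because the dual complex itself is a crepant-birational invariant (up to simplicial homotopy/PL-homeomorphism) by the results of \cite{KX16}, and because an orientation class, once it exists and is normalized by a choice on the canonical $\zz/2\zz$-cover, is carried isomorphically through these equivalences. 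Once admissibility is established, invariance under every $g\in\mathrm{Bir}(X,B)$ is automatic, and non-triviality of $s$ is inherited from non-triviality of $s_Y$, completing the proof.
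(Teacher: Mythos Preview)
Your approach differs from the paper's and has a genuine gap when $B^{<1}\neq 0$.

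The paper deduces Theorem~\ref{introthm:admissible} from Theorem~\ref{thm:lcy-coreg-0-admissible}, which builds an \emph{admissible} section by induction on $\dim X$: restrict via adjunction to each component $B_i\subset B^{=1}$; each $(B_i,\Diff_{B_i}(B-B_i))$ is again projective dlt log Calabi--Yau of coregularity~$0$ with Weil index dividing $\lambda'$ (Lemma~\ref{lem:weil-index-under-adjunction}); by induction these carry admissible sections, and Lemmas~\ref{lem:from-conn-to-disc}, \ref{lem:descending-nonormal}, \ref{lem:pull-back-admissible}, \ref{lem:lifting-A-to-PA}, \ref{lem:PA=A} glue and lift them to an admissible section on $X$. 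The fractional part $B^{<1}$ is absorbed automatically by this recursion and by the coefficient control in Lemma~\ref{lem:weil-index-under-adjunction}; the argument never passes through an orientation class.

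Your route instead tries to identify $H^0(Y,\mathcal{O}_Y(\lambda'(K_Y+B_Y)))$ with $H^{n}(\mathcal{D}(B_Y^{=1}),\kk)$ and use the resulting $\zz$-structure to force $\mathrm{Bir}$ to act by $\pm 1$, so that evenness of $\lambda'$ kills the sign. But Corollary~\ref{cor_orientability} and Proposition~\ref{lem:pseudomflddualcomplex} require $B=B^{=1}$. When $B^{<1}\neq 0$, the pair $(Y,B_Y^{=1})$ is not log Calabi--Yau (indeed $-(K_Y+B_Y^{=1})=B_Y^{<1}$ is effective and nonzero) and $\mathcal{D}(B_Y^{=1})$ is typically a pseudo-manifold \emph{with} boundary---for instance, for $(\pp^2,L_1+L_2+\tfrac12 L_3+\tfrac12 L_4)$ with lines in general position it is an interval---so its top cohomology vanishes and there is no orientation class to carry. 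Your phrase about ``bookkeeping that produces the factor $\lambda$'' points to Proposition~\ref{fractional:part}, but that is proved by running an MMP to a Mori fiber space and lifting a section from a horizontal component via Kawamata--Viehweg vanishing; it produces a nonzero section, not a $\zz$-lattice in $H^0$, so the $\pm 1$ conclusion and the squaring trick have nothing to act on. The ``intrinsic orientation'' claim in your final paragraph is therefore unsupported outside the reduced case, and the argument does not close.
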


Theorem~\ref{introthm:admissible} may be regarded
as an effective version of the boundedness
of B-representations; see~\cite[Theorem 1.2]{HX16} or ~\cite[Theorem 3.15]{FG14}.
As a consequence, we show the following corollary.

\begin{introcor}\label{introthm:slc} 
Let $(X,B)$ be a projective
semi-log canonical Calabi--Yau
pair of coregularity $0$ and Weil index $\lambda$.
Then, we have that $\lambda'(K_X+B)\sim 0$,
where $\lambda'=\lcm(\lambda,2)$.
\end{introcor}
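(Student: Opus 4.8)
\textbf{Proof proposal for Corollary \ref{introthm:slc}.}

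The plan is to reduce the semi-log canonical statement to the normal log canonical case already handled by Theorem \ref{introthm:main-thm} (applied with trivial moduli part) via the normalization, using Theorem \ref{introthm:admissible} to glue the sections across the conductor. First I would pass to the normalization $\nu\colon \bar X^\nu \to X$ of $X$; if $X$ has several irreducible components, $\bar X^\nu$ is a disjoint union of normal varieties, and pulling back the slc structure produces on each component $Y_i$ a log canonical log Calabi--Yau pair $(Y_i, B_{Y_i})$, where $B_{Y_i}$ contains the conductor divisor $D_i$ with coefficient $1$. A standard fact (see \cite[\S5]{Kol13}) is that the coregularity is preserved under normalization, so each $(Y_i, B_{Y_i})$ still has coregularity $0$, and its Weil index divides $\lambda$. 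Hence Theorem \ref{introthm:main-thm} gives $\lambda'(K_{Y_i}+B_{Y_i})\sim 0$ on each $Y_i$, equivalently a nowhere-vanishing section $s_i \in H^0(Y_i, \mathcal{O}_{Y_i}(\lambda'(K_{Y_i}+B_{Y_i})))$, unique up to scalar.

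The heart of the argument is to show that these sections can be chosen compatibly so that they descend to a section of $\mathcal{O}_X(\lambda'(K_X+B))$ on $X$ itself. By the theory of slc gluing (\cite[\S5]{Kol13}), giving a section downstairs is equivalent to giving sections $s_i$ upstairs that agree on the conductor after identification via the gluing involution $\tau$ on the normalization $D^\nu$ of the conductor. The conductor divisors $D_i$, equipped with the restricted boundary (the different), are themselves log canonical log Calabi--Yau pairs, and one checks their coregularity is again $0$; moreover each $s_i$ restricts to a section of the analogous line bundle on $D_i$ by adjunction. Here I would invoke Theorem \ref{introthm:admissible}: the residue/restriction of $s_i$ to $D^\nu$ is an \emph{admissible} section, hence invariant under the crepant birational automorphism $\tau$ of the conductor pair. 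Since restricting to the conductor is compatible with crepant birational pullback, the two restrictions $s_i|_{D^\nu}$ and $\tau^*(s_j|_{D^\nu})$ lie in the same one-dimensional space and, after a single global rescaling of each $s_i$, can be made to coincide. This is the step I expect to be the main obstacle: one must track carefully that the Poincaré residue maps intertwine the gluing involution with the birational action used to define admissibility, and that the (possibly disconnected) combinatorics of how conductor components are glued does not obstruct a simultaneous normalization of scalars — this is exactly where the $\tau$-invariance from admissibility, rather than mere existence of sections, is indispensable.

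Once the $s_i$ glue, we obtain $0 \neq s \in H^0(X, \mathcal{O}_X(\lambda'(K_X+B)))$; since each $s_i$ is nowhere vanishing and $\nu$ is finite surjective, $s$ is nowhere vanishing on $X$, so $\mathcal{O}_X(\lambda'(K_X+B))\cong \mathcal{O}_X$, i.e. $\lambda'(K_X+B)\sim 0$. Finally, the necessity of replacing $\lambda$ by $\lambda'=\lcm(\lambda,2)$ is already visible in the normal case (Example \ref{ex:RPN}), and the orientation double-cover phenomenon of Corollary \ref{cor_orientability} is what forces the factor of $2$: the conductor gluing can twist the trivializing section by a sign, and only $\lambda'(K_X+B)$ is guaranteed to kill that twist.
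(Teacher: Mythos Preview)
Your strategy---normalize, produce trivializing sections on the components, then descend by checking $\tau$-compatibility on the conductor---is exactly the paper's. The paper, however, packages the gluing step differently, and the difference matters for the point you yourself flag as the main obstacle.

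You obtain the $s_i$ from Theorem~\ref{introthm:main-thm} and then invoke Theorem~\ref{introthm:admissible} to assert that $s_i|_{D^\nu}$ is admissible. But Theorem~\ref{introthm:admissible} only asserts \emph{existence} of a ${\rm Bir}$-invariant section; it does not say that an arbitrary trivialization restricts to something admissible on the conductor, nor that the gluing involution $\tau$ (which may permute conductor components lying in \emph{different} $Y_i$, or swap two conductor components inside the \emph{same} $Y_i$) is controlled by ${\rm Bir}(Y_i,B_{Y_i})$. The ``simultaneous rescaling'' you propose runs into exactly the obstruction you anticipate: if a single component $Y_i$ meets the conductor in several pieces, one scalar $c_i$ must correct all mismatches at once, and a cycle in the gluing graph produces a cocycle condition that a naive rescaling cannot a priori kill.

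The paper resolves this by working with the recursive notion of (pre-)admissible section (Definition~\ref{def:admissible}). It passes to a dlt modification $(Y,B_Y)$ of the normalization and invokes Theorem~\ref{thm:lcy-coreg-0-admissible}---the statement from which Theorem~\ref{introthm:admissible} is itself deduced---to obtain a non-trivial $s \in A(Y,\lambda'(K_Y+B_Y))$. By definition such a section is pre-admissible, so its restriction to the double locus is already admissible and hence $\tau$-compatible; Lemma~\ref{lem:descending-nonormal} then gives descent to $X$ in one line. The cycle obstruction is absorbed into the inductive proof of Theorem~\ref{thm:lcy-coreg-0-admissible} (via Lemma~\ref{lem:from-conn-to-disc}), rather than left to a rescaling argument at the end.
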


Corollary \ref{introthm:slc}
is an immediate consequence of Theorem~\ref{introthm:main-thm}
and Theorem~\ref{introthm:admissible}.
Our techniques do not apply to the case of
generalized semi-log canonical pairs; see Remark~\ref{rem:gslc}.

\subsection{Index of log canonical singularities}
By adjunction, the exceptional divisor of a dlt modification of an isolated log canonical singularity is a projective semi-log canonical log Calabi--Yau pair.
Therefore, Corollary \ref{introthm:slc} provides a bound on the index of log canonical singularities.

\begin{introcor}\label{cor:sing}
Let $(X, B; \, x)$ be the germ of a log canonical pair of coregularity 0 and Weil index $\lambda$.
Suppose that the pair $(X, B)$ is  dlt away from $x$.
Then, we have that $\lambda'(K_X+B)\sim 0$,
where $\lambda'=\lcm(\lambda,2)$.
\end{introcor}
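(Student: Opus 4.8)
The plan is to reduce Corollary~\ref{cor:sing} directly to Corollary~\ref{introthm:slc} by means of adjunction to the exceptional divisor of a dlt modification. First I would take a dlt modification $f\colon (Y, B_Y + E) \to (X, B)$ of the log canonical germ, where $E = \sum E_i$ is the reduced sum of the $f$-exceptional divisors lying over $x$ and $B_Y$ is the strict transform of $B$; since $(X,B)$ is dlt away from $x$, the only exceptional divisors have center exactly $x$, so $E$ is nonempty and maps to the point $x$. Crepancy gives $K_Y + B_Y + E = f^*(K_X + B)$. Because $x$ is an lc center and the germ is lc of coregularity $0$, a minimal lc center of $(X,B)$ at $x$ has coregularity $0$, and this is inherited by $(Y, B_Y+E)$; the exceptional divisor $E$ is then a (possibly disconnected, possibly non-normal along its double locus) union of the components $E_i$, and by the dlt adjunction / different formalism of \cite[\S4]{Kol13}, $(E, \Diff_E(B_Y))$ is a projective semi-log canonical log Calabi--Yau pair, with $(K_Y + B_Y + E)|_E = K_E + \Diff_E(B_Y)$ by adjunction.

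Next I would check that the coregularity and Weil index behave as needed under this adjunction. The coregularity of $(E, \Diff_E(B_Y))$ equals the coregularity of $(Y, B_Y+E)$ along $E$, which is $0$: a minimal lc center of $(Y, B_Y+E)$ contained in $E$ is simultaneously a minimal lc center of the slc pair on $E$, and its codimension inside $E$ equals its codimension inside $Y$ minus $1$, so the dual complex $\mathcal{D}$ drops dimension by exactly one and maximality is preserved. For the Weil index, one has $\lambda(K_X+B) = \lambda(K_Y + B_Y + E)$ as this quantity is a birational invariant of the crepant class, and restricting to $E$ the index can only improve, so the Weil index of $(E, \Diff_E(B_Y))$ divides $\lambda$; replacing it by a multiple only makes $\lambda' = \lcm(\lambda, 2)$ larger, which is harmless. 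Applying Corollary~\ref{introthm:slc} to the slc pair $(E, \Diff_E(B_Y))$ then yields $\lambda'\big(K_E + \Diff_E(B_Y)\big) = \lambda'(K_Y + B_Y + E)|_E \sim 0$ on $E$.

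Finally I would upgrade this linear equivalence on $E$ to one on the germ $(X, B)$. Since $f$ contracts $E$ to the point $x$ and $(X,B;x)$ is a germ, the divisor $\lambda'(K_X+B)$ is, after shrinking $X$ around $x$, linearly trivial if and only if its pullback $\lambda'(K_Y+B_Y+E)$ is; and by the usual argument for contractions of lc centers (using that $E$ is connected in a neighbourhood of $x$ after passing to the minimal lc center, or otherwise running the argument componentwise and gluing), linear triviality of $\lambda'(K_Y+B_Y+E)$ on a neighbourhood of $E$ is detected by its restriction to $E$ — this is where one invokes that $\oo_Y(\lambda'(K_Y+B_Y+E))$ is trivial in a neighbourhood of $E$ once its restriction to $E$ is, via the theorem on formal functions or the connectedness of $E$ together with vanishing of $R^1 f_* \oo$ of the appropriate sheaf, much as in \cite[\S17]{KX16}. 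Pushing forward by $f$ then gives $\lambda'(K_X+B)\sim 0$ on the germ.

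The main obstacle I expect is the last step: passing from the linear equivalence on the exceptional divisor $E$ to one on the germ. One must carefully control the restriction map $H^0$ of the relevant reflexive sheaf and rule out a nontrivial twist supported on $E$; when $E$ is reducible or its dual complex is not simply connected there can be a priori obstructions, and the clean way around this is exactly the admissible-section machinery of Theorem~\ref{introthm:admissible} — produce the section on $E$ invariantly under $\mathrm{Bir}(E, \Diff_E(B_Y))$, which includes the gluing automorphisms, so it descends and extends. Handling non-normality of $E$ and the compatibility of the different with $\lambda'$ along the double locus is the technical heart; everything else is bookkeeping with crepant pullbacks and coregularity.
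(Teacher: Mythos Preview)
Your overall architecture matches the paper's: take a dlt modification, perform adjunction to the exceptional divisor $E$, apply Corollary~\ref{introthm:slc}, and lift back to the germ. Two steps in your sketch, however, contain genuine gaps.

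First, the claim that ``restricting to $E$ the index can only improve, so the Weil index of $(E,\Diff_E(B_Y))$ divides $\lambda$'' is false in general: the different $\Diff_E(B_Y)$ acquires terms of the form $1-\tfrac{1}{m}$ from codimension-$2$ cyclic quotient singularities of $Y$ along $E$, and $m$ need not divide $\lambda$. What is true, and nontrivial, is that in coregularity $0$ the Weil index of the adjoined pair divides $\lambda'=\lcm(\lambda,2)$; this is Lemma~\ref{lem:weil-index-under-adjunction}, whose proof genuinely uses the coregularity-$0$ hypothesis via Lemma~\ref{lem:producing-lcy-p1}. The conclusion you need survives, but the reasoning must be replaced by an appeal to that lemma.

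Second, the lifting step is where the paper does the remaining work, and your proposed tools are not the ones that succeed. The paper uses the short exact sequence
\[
0 \rar \O Y.(\lambda'(K_Y+B_Y)-E) \rar \O Y.(\lambda'(K_Y+B_Y)) \rar \O E.(\lambda'(K_Y+B_Y)) \rar 0
\]
and needs $R^1 g_* \O Y.(\lambda'(K_Y+B_Y)-E)=0$. This comes from \cite[Corollary 10.38]{Kol13} applied to $\lambda'(K_Y+B_Y)-E \sim_\qq K_Y + g^{-1}_*B$, but that vanishing requires that every log canonical center of the dlt pair $(Y, g^{-1}_*B)$ map birationally onto its image under $g$. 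The paper verifies this: any such center failing birationality would map to $x$, hence lie in $E$, contradicting dlt-ness of $(Y, g^{-1}_*B + E)$. Neither the theorem on formal functions nor the admissible-section machinery of Theorem~\ref{introthm:admissible} is invoked at this stage; the latter is already baked into Corollary~\ref{introthm:slc}, and there is no further gluing obstruction to handle once the $R^1$ vanishes.
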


If the pair $(X, B; \, x)$ has only standard coefficients, one can drop the assumption that the log canonical germ $(X,B;\,x)$ is dlt away from $x$, as shown by Fujino in \cite[Theorem 4.18]{Fuj01}.
In particular, Corollary~\ref{cor:sing} and \cite[Theorem 4.18]{Fuj01} specialize to Corollary~\ref{cor:isolatedlogcan}, which was first proved by Ishii in \cite[Theorem 4.5]{Ish00}.
Note that the Hodge theoretic assumption in {\it loc.\ cit.} coincides with our condition of coregularity 0; see \cite[Claim 32.3]{KX16}. 

\begin{introcor}\label{cor:isolatedlogcan}
The index of an isolated log canonical singularity $(X;\, x)$ of coregularity 0 is either 1 or 2.
\end{introcor}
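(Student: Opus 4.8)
\textbf{Proof plan for Corollary~\ref{cor:isolatedlogcan}.}
The plan is to reduce the statement to Corollary~\ref{cor:sing} (equivalently, to the already-cited result of Fujino, \cite[Theorem 4.18]{Fuj01}) by unwinding the definitions of the objects involved. First I would recall that the ``index'' of an isolated log canonical singularity $(X;\,x)$ of coregularity $0$ means the smallest positive integer $m$ for which $mK_X \sim 0$ in a neighborhood of $x$; implicitly this presupposes $K_X$ is $\qq$-Cartier near $x$, so that the log canonical condition makes sense, i.e., we are viewing $(X;\,x)$ as the log pair $(X, 0;\,x)$ with empty boundary. In particular the Weil index $\lambda$ of $(X,0)$ is $1$, since $K_X + 0 = K_X$ is already a Weil divisor, and hence $\lambda' = \lcm(1,2) = 2$.

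Next I would observe that, since the boundary is empty, it consists only of standard coefficients (vacuously), so we are precisely in the situation where Fujino's theorem \cite[Theorem 4.18]{Fuj01} applies: the hypothesis that $(X,0)$ be dlt away from $x$ can be dropped. Alternatively, one invokes Corollary~\ref{cor:sing} directly after a dlt modification, using that a log canonical pair with empty boundary that is not already dlt near $x$ can still be handled because the adjunction argument sketched before Corollary~\ref{cor:sing} passes through a dlt modification $\pi\colon Y \to X$, whose exceptional divisor carries the structure of a projective semi-log canonical log Calabi--Yau pair of coregularity $0$ to which Corollary~\ref{introthm:slc} applies. Either way, the conclusion is $\lambda'(K_X + 0) = 2K_X \sim 0$ near $x$, so the index divides $2$, i.e., it is $1$ or $2$.

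Finally, I would note for completeness that both values actually occur, so the statement is sharp: the index is $1$ for, e.g., a cone over an abelian variety or a simple normal crossing degeneration whose dual complex is an orientable pseudo-manifold, and it is $2$ for the cone over a Kummer-type quotient or, concretely, the example recorded in Example~\ref{ex:RPN} whose boundary dual complex is a real projective space; this is exactly the dichotomy predicted by the orientability statement Corollary~\ref{cor_orientability}. I do not expect any genuine obstacle here: the only subtlety is bookkeeping, namely checking that the germ hypothesis matches the coregularity-$0$ hypothesis of \cite{Ish00} via \cite[Claim 32.3]{KX16}, and confirming that $\lambda = 1$ in the empty-boundary case so that $\lambda' = 2$ gives the clean numerical bound.
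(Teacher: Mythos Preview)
Your approach is correct and is essentially the paper's own: the paper does not give a separate proof but says explicitly that Corollary~\ref{cor:isolatedlogcan} is a specialization of Corollary~\ref{cor:sing} and of \cite[Theorem 4.18]{Fuj01}, which is exactly what you do (empty boundary has Weil index $\lambda=1$, hence $\lambda'=2$). One small simplification: your second route, where you try to salvage Corollary~\ref{cor:sing} ``after a dlt modification'' in case $(X,0)$ is not dlt away from $x$, is unnecessary and slightly off---the hypothesis ``dlt away from $x$'' in Corollary~\ref{cor:sing} is used in its proof to ensure the exceptional locus maps to $x$, so you cannot simply bypass it; but for an \emph{isolated} singularity $X$ is smooth away from $x$, so $(X,0)$ is automatically dlt there and Corollary~\ref{cor:sing} applies directly without appealing to Fujino.
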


The only possible log canonical surface singularities of coregularity 0 and index 2 are double quotients of cusp singularities; see, e.g., \cite[\S 7.8]{Ishii2018}.
Examples in dimension 3 appear in \cite[Proposition 3.54 and Proposition 3.59]{Kol13} provided that the 2-manifold $F$ in {\it loc.\ cit.} is chosen to be non-orientable. 

\subsection{Calabi--Yau and holomorphic symplectic varieties with group actions} We apply Theorem \ref{introcor-GS} to study degenerations of quotients of Calabi--Yau varieties, and in particular of holomorphic symplectic varieties.

\begin{introthm}\label{thm_group_actions_intro}
Let $X$ be a projective log canonical variety with $K_X \sim 0$ and let $G$ be a finite group acting on $X$ freely in codimension 1.
Assume that the quotient $X/G$ is a fiber of a minimal family of Calabi--Yau pairs of coregularity $0$. 
Then, the order of the character $\rho \colon G \to \mathrm{GL}(H^0(X,\O X.(K_X)))$ is at most $2$.
\end{introthm}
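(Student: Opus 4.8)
\textbf{Proof proposal for Theorem~\ref{thm_group_actions_intro}.}

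The plan is to package the $G$-action data into a log Calabi--Yau pair and apply Theorem~\ref{introcor-GS} (together with Theorem~\ref{introthm:admissible}) to the quotient. First I would record that, since $G$ acts freely in codimension~$1$, the quotient map $q \colon X \to Y := X/G$ is quasi-\'etale, so $q$ is crepant: $q^*K_Y \sim_{\qq} K_X \sim 0$, and in particular $K_Y \sim_{\qq} 0$ and $Y$ is log canonical (klt even, if $X$ is; but log canonical suffices). Thus $(Y,0)$ is a log canonical Calabi--Yau pair, and by hypothesis it is a fiber $\mathcal{X}_c$ of a minimal family of Calabi--Yau pairs of coregularity $0$ in the sense of Definition~\ref{defn:minimalmaximal}. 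Now the coregularity is a crepant-birational (indeed quasi-\'etale) invariant, so $(Y,0)$ has coregularity~$0$; one should check that $c$ lies in the locally stable locus $U$ — but this is part of the setup of a minimal family, or can be arranged by shrinking $C$, since Theorem~\ref{introcor-GS} only asserts the conclusion over $U$ and the hypothesis places $Y$ as a fiber to which that theorem applies. Hence Theorem~\ref{introcor-GS} (in its Calabi--Yau pair form, Theorem~\ref{introcor-GS-pairs}) gives $2K_Y \sim 0$, so $\lambda' = \lcm(\lambda,2)$ with $\lambda = 1$ here, i.e.\ $\lambda' = 2$.

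The second step is to transport this back upstairs and identify the character. The key point is that $q^*$ identifies $H^0(Y, \mathcal{O}_Y(2K_Y))$ with the $G$-invariant subspace $H^0(X, \mathcal{O}_X(2K_X))^G$: since $q$ is \'etale in codimension~$1$ and both spaces are reflexive of rank one, pullback of the nowhere-vanishing section cutting out $2K_Y \sim 0$ produces a nowhere-vanishing $G$-invariant section of $\mathcal{O}_X(2K_X)$, hence $2K_X \sim 0$ and the line $H^0(X,\mathcal{O}_X(2K_X))$ is spanned by a $G$-invariant generator. Equivalently $G$ acts trivially on $H^0(X, \mathcal{O}_X(2K_X))$. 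Now I would relate this to the character $\rho$ on $H^0(X,\mathcal{O}_X(K_X))$: if $K_X \sim 0$, then $H^0(X,\mathcal{O}_X(K_X))$ is one-dimensional, spanned by a section $s$, and $s^{\otimes 2}$ spans $H^0(X,\mathcal{O}_X(2K_X))$; the action of $g \in G$ multiplies $s$ by $\rho(g)$ and hence $s^{\otimes 2}$ by $\rho(g)^2$. Triviality of the action on $H^0(X,\mathcal{O}_X(2K_X))$ forces $\rho(g)^2 = 1$ for all $g$, so the image of $\rho$ has exponent dividing~$2$, i.e.\ the order of $\rho$ (as a homomorphism to $\mathrm{GL}_1 = \mathrm{GL}(H^0(X,\mathcal{O}_X(K_X)))$, so really of its image $\subset \cc^\times$) is at most~$2$.

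The step I expect to be the main obstacle is the bookkeeping in the first paragraph: verifying cleanly that ``$(Y,0)$ is a fiber of a minimal family of coregularity~$0$'' genuinely puts us in the scope of Theorem~\ref{introcor-GS}, in particular that we may assume $c \in U$ or otherwise invoke the correct variant. This is really a matter of unwinding Definitions~\ref{defn:minimalmaximal} and~\ref{def:locallystable} and is not deep, but it is where the hypotheses of the theorem must be matched precisely to the input of Theorem~\ref{introcor-GS}. A secondary subtlety is the identification $q^*H^0(Y,\mathcal{O}_Y(2K_Y)) = H^0(X,\mathcal{O}_X(2K_X))^G$: one wants $2K_X \sim q^*(2K_Y)$ as \emph{linear} (not just $\qq$-linear) equivalence, which follows because $q$ is \'etale in codimension~$1$ and $K_X - q^*K_Y$ is a $\qq$-divisor supported on the ramification locus, hence on a set of codimension $\geq 2$, hence zero as a Weil divisor; this is standard but worth stating. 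Once these two points are in place, everything else is the elementary $\rho \mapsto \rho^2$ computation above.
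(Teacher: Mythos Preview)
Your proposal is correct and follows essentially the same route as the paper: apply Theorem~\ref{introcor-GS} (equivalently the $\mathcal{B}=0$ case of Theorem~\ref{introcor-GS-pairs}) to the family to obtain $2K_Y\sim 0$, then use that sections of $\mathcal{O}_Y(2K_Y)$ pull back to $G$-invariant sections of $\mathcal{O}_X(2K_X)$ to conclude $\rho^2$ is trivial. Two minor remarks: the reference to Theorem~\ref{introthm:admissible} is unnecessary here, and your aside that ``$(Y,0)$ has coregularity~$0$'' is misplaced (the coregularity-$0$ hypothesis concerns the pair $(\mathcal{X},\mathcal{X}_{c_0,\mathrm{red}})$ near the special fiber, not the fiber $Y$ itself, which may well be klt) --- but you do not actually use that claim, so the argument stands.
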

One may drop the assumption that $G$ acts freely in codimension 1 and replace $X$ with a log Calabi--Yau pair with standard coefficients. For expository reasons, we postpone it to Theorem~\ref{thm_group_actions_general}. 
In particular, the result imposes strong constraints on degenerations of holomorphic symplectic varieties with a non-symplectic automorphism.
See \S\ref{sec:quotients} for definitions.

\begin{introcor}\label{thm_hyperkahler}
Let $\pi^* \colon \mathcal{X}^{*} \to C^*$ be projective family of type III of holomorphic symplectic varieties of dimension $2n$ with a finite order automorphism $g \colon \mathcal{X}^{*} \to \mathcal{X}^{*}$ acting purely non-symplectically on the fibers $\mathcal{X}^{*}_{c}$ for any $c \in C^*$. Then the order of $g$ divides $2n$.
\end{introcor}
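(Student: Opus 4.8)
\textbf{Proof proposal for Corollary~\ref{thm_hyperkahler}.}

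The plan is to package the family $\pi^*\colon \mathcal{X}^*\to C^*$ together with the automorphism $g$ into the framework of Theorem~\ref{thm_group_actions_intro}, applied to the cyclic group $G=\langle g\rangle$. First I would recall that a type III degeneration of holomorphic symplectic varieties is, by definition, a maximal (maximally unipotent) degeneration: after a finite base change and semistable reduction, the central fiber is a simple normal crossing Calabi--Yau variety whose dual complex has maximal dimension, i.e.\ coregularity $0$. Concretely, I would take a projective compactification $\mathcal{X}\to C$ of $\mathcal{X}^*\to C^*$ over a curve with $0\in C$, run a minimal model program over $C$ relative to $K_{\mathcal{X}}$ (using that $K_{\mathcal{X}^*_c}\sim_{\qq}0$ fiberwise) to arrive at a minimal family of Calabi--Yau varieties of coregularity $0$ over $c_0=0$ in the sense of Definition~\ref{defn:minimalmaximal}. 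Since $g$ acts on $\mathcal{X}^*$ fiberwise, after equivariant semistable reduction and an equivariant MMP the action extends to the minimal model, so the generic fiber $\mathcal{X}_c$ ($c\in C^*$) is a holomorphic symplectic variety with the finite order automorphism $g$; as $g$ acts freely in codimension $1$ (it is an automorphism of a smooth, or at worst klt, variety with trivial canonical class, hence any fixed divisor would violate purity of the branch locus only if it is empty — more robustly, one restricts to the symplectic/holomorphic setting where non-symplectic automorphisms of irreducible symplectic manifolds act with fixed loci of positive codimension), the quotient $\mathcal{X}_c/G$ is again a fiber of a minimal family of coregularity $0$ Calabi--Yau pairs.

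With that setup, Theorem~\ref{thm_group_actions_intro} applies to $X=\mathcal{X}_c$ and $G=\langle g\rangle$: it tells us the character $\rho\colon G\to \mathrm{GL}(H^0(X,\mathcal{O}_X(K_X)))$ has order at most $2$. The second, purely classical step translates this into a divisibility statement about $\mathrm{ord}(g)$. Here $X=\mathcal{X}_c$ is a holomorphic symplectic variety of dimension $2n$, so $H^0(X,\Omega_X^{2n})=H^0(X,\mathcal{O}_X(K_X))$ is one-dimensional, spanned by $\sigma^{\wedge n}$ where $\sigma$ is the holomorphic symplectic form. If $g$ acts on the $1$-dimensional space $H^0(X,\Omega_X^2)=\cc\,\sigma$ by a scalar $\zeta$, then it acts on $H^0(X,\Omega_X^{2n})$ by $\zeta^n$; the hypothesis that $g$ is \emph{purely non-symplectic} means precisely that $\zeta$ is a primitive root of unity of order exactly $\mathrm{ord}(g)=:d$. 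Then $\rho(g)=\zeta^n$ has order $d/\gcd(n,d)$, and Theorem~\ref{thm_group_actions_intro} forces $d/\gcd(n,d)\in\{1,2\}$, i.e.\ $\gcd(n,d)\geq d/2$, which gives $d\mid 2n$ (if $d/\gcd(n,d)=1$ then $d\mid n\mid 2n$; if $d/\gcd(n,d)=2$ then $\gcd(n,d)=d/2$ divides $n$, so $d$ divides $2n$). This yields the claimed conclusion.

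The main obstacle I anticipate is not the arithmetic endgame but rather the equivariant geometry in the first paragraph: ensuring that passing from the given family $\pi^*$ of type III to an actual \emph{minimal family of Calabi--Yau pairs of coregularity $0$} on which $g$ still acts, and whose central fiber still has coregularity $0$, can be done compatibly with the $G$-action. This requires an equivariant version of semistable reduction (available by Abramovich--Karu or Kollár--Mori type arguments) and an equivariant relative MMP over the curve $C$; one must also check that the finite automorphism $g$ does not acquire fixed \emph{divisors} on the minimal model — this is where the holomorphic symplectic hypothesis is essential, since it guarantees, via the symplectic form, that a non-symplectic automorphism has fixed locus of codimension $\geq 2$ (indeed its fixed locus is itself symplectic of smaller dimension), so $G$ acts freely in codimension $1$ and the hypotheses of Theorem~\ref{thm_group_actions_intro} are genuinely met. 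A secondary subtlety is matching conventions: "type III" should be unwound to verify it delivers coregularity $0$ (maximal unipotent monodromy $\Leftrightarrow$ dual complex of the central fiber has dimension $\dim X - 1$, cf.\ \cite{KX16, NX16}), which I would state as a lemma and cite rather than reprove.
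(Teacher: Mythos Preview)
Your overall strategy matches the paper's: extend equivariantly, pass to the quotient, then use the index bound to constrain the order of $\zeta^n$ and hence of $g$. The arithmetic in your second paragraph is fine and is exactly what the paper does.

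The genuine gap is your claim that $G=\langle g\rangle$ acts freely in codimension $1$, which you need in order to invoke Theorem~\ref{thm_group_actions_intro}. This is false in general, and your justification (``its fixed locus is itself symplectic of smaller dimension'') confuses symplectic with non-symplectic automorphisms. It is \emph{symplectic} automorphisms whose fixed loci are symplectic submanifolds; for a \emph{non}-symplectic automorphism the fixed locus is typically not symplectic, and for an anti-symplectic involution it is Lagrangian. In dimension $2n$ a Lagrangian has dimension $n$, so already for $n=1$ (K3 surfaces) a non-symplectic involution can, and often does, fix a curve, i.e.\ a divisor. Thus the hypothesis of Theorem~\ref{thm_group_actions_intro} is not available.

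The paper avoids this by appealing instead to Theorem~\ref{thm_group_actions_general}, which allows ramification: one forms $(\mathcal{Y},\Delta_{\mathcal{Y}})=(\mathcal{X},\mathcal{X}_{c_0})/g$ with the boundary $\Delta_{\mathcal{Y}}$ prescribed by Riemann--Hurwitz (hence with standard coefficients), checks via Proposition~\ref{prop_quotient_coreg} that coregularity $0$ is preserved under the finite quotient, and then applies the index bound to the resulting minimal family of log Calabi--Yau \emph{pairs}. If you rewrite your first paragraph to target Theorem~\ref{thm_group_actions_general} rather than Theorem~\ref{thm_group_actions_intro}, the rest of your argument goes through unchanged.
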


\begin{remark} \label{rmk_alexeev}
The case of K3 surfaces is proved in \cite[Corollary 3.16]{AEC2021} or \cite{Matsumoto2016} via Hodge theory.
The arguments of \cite[\S3]{AEC2021} extend verbatim to the case of higher dimensional primitive symplectic varieties, see Definition \ref{def:holosympl}.
This means that if the fibers $\mathcal{X}^*_{c}$ in Corollary~\ref{thm_hyperkahler} are primitive symplectic, the order of $g$ is exactly $2$.
\end{remark}

\subsection{Strategy of the proof of Theorem \ref{introthm:main-thm}}
We distinguish two cases: either $B \sups <1. + \bM X.$ is numerically trivial, or not. In the first case, we can actually assume that $B^{=1}=B$ and $\bM .=0$, as explained in \S\ref{num_trivial_subsection}, and so Theorem \ref{introthm:main-thm} reduces to Proposition~\ref{cor:reducedCY}.
In \S\ref{sec:orientability} and \S\ref{sec:dualcomplexcoreg0} we provide a (self-contained) topological proof of Proposition~\ref{cor:reducedCY}. We explain the relation between the index of a log Calabi--Yau pair and the orientability of its dual complex.
In brief, we show that the cohomology group $H^0(X, \mathcal{O}_{X}(K_X+B))$ can be endowed with an integral structure given by the top cohomology of the dual complex $\mathcal{D}(B)$.
The factor $2$ in the index then serves to control the orientation of $\mathcal{D}(B)$. 

Following a suggestion of Koll\'{a}r, we also present a second proof of Proposition~\ref{cor:reducedCY} in \S\ref{sec:residue}.
In this case, the presence of the factor $2$ reflects the fact that Poincar\'{e} residue maps are defined only up to a sign and the fact that the rational canonical form $dx/x$ on $\pp^1$ has residues $1$ and $-1$ at its poles. 
See Remark~\ref{rem:comparison} for a comparison between the two approaches.

If instead $B \sups <1. + \bM X. \nequiv 0$, we adopt an inductive strategy from birational geometry.
First, by the work of Filipazzi and Svaldi, we may assume that $B^{=1}$ fully supports a big divisor; see~\cite{FS20}.
Then, we can run a $(K_X + B^{=1})$-MMP
which terminates with a Mori fiber space $Y\rightarrow Z$.
Let $B_Y$ be the push-forward of $B$ on $Y$. 
Since $(Y,B_Y,\bM.)$ and $(X,B,\bM .)$ are crepant equivalent, they have the same index; see Corollary~\ref{lem:reddlt}. 
Thus, it suffices to control the index
of $(Y,B_Y,\bM.)$.
The divisor $B^{=1}$ fully supports a big divisor, so
some component $S$ of ${B_Y}^{=1}$ is ample over the base.
Since $(Y,B_Y)$ is dlt, such component $S$ is normal.
We can perform adjunction to $S$ and 
obtain a generalized log Calabi--Yau pair structure
$(S,B_S,\bN .)$. 
A careful analysis of the coefficients of $B_S$ and $\bN.$ shows that 
$\lambda'(K_S+B_S+\bN S.)$ is Weil, where $\lambda'=\lcm(\lambda,2)$; see \S\ref{subsec:Weil-index}.
By adjunction, $(S,B_S,\bN.)$ is a generalized log Calabi--Yau pair.
By induction on the dimension, we conclude that
$\lambda'(K_S+B_S+\bN S.) \sim 0$.
Then, using the positivity of $S$ and a relative version of Kawamata--Viehweg vanishing over $Z$, we conclude that $\lambda'(K_X+B+\bM X.)\sim 0$; see \S\ref{sec:fractionalpart}.

\begin{remark}\label{rem:gslc}
In this work, we avoid the use of non-normal generalized pairs. 
In order to control the index of non-normal pairs, it is often necessary to use gluing theory. 
For recent advances in this direction, see for instance \cite{Hu21,JL22}.
\end{remark}

\subsection*{Acknowledgements}
This project was initiated in the \href{https://www.math.ucla.edu/~jmoraga/Learning-Seminar-MMP}{Minimal Model Program Learning Seminar}.
The authors would like to thank J\'{a}nos Koll\'{a}r for suggesting an alternative proof of Proposition~\ref{dlt:reduced}, Valery Alexeev and Giovanni Mongardi for email exchange about  \S\ref{sec:quotients}, and Greg Friedman and Sucharit Sarkar for discussions regarding pseudo-manifolds.

\section{Notation}
We work over a field $\kk$ of characteristic 0.
We refer to \cite{KM98} for the standard terminology in birational geometry.
For the language of generalized pairs and b-divisors, we refer to \cite{FS20}.

Our varieties are connected and quasi-projective
unless otherwise stated.
Given a normal variety $X$ and an open subset $U \subset X$, we say that $U$ is \emph{big} if $\codim_X(X \setminus U) \geq 2$.

In this work, all divisors have coefficients in $\qq$.
Unless otherwise stated, a \emph{divisor} means a Weil $\qq$-divisor.
Given two proper normal varieties $X$ and $X'$ and a $\qq$-Cartier divisor $D$ on $X$ with $D \sim_\qq 0$, the \emph{crepant trace} $D'$ of $D$ is defined as follows:
we take a common resolution $X''$ with morphisms $p \colon X'' \rar X$ and $q \colon X'' \rar X'$.
Then, we set $D' \coloneqq q_*p^* D$.
If $(X,B,\bM .)$ is a generalized log Calabi--Yau pair, its trace $(X',B',\bM.)$ on $X'$ is defined in the same fashion, where we put $D= \K X. + B  + \bM X.$.

\begin{definition}[Dual complex]\label{def:dualcomplex}
Let $(X,B)$ be a dlt pair 
with
$B^{=1}=\sum_{i\in I}B_i$.
The \emph{dual complex} of $(X,B)$, denoted by $\mathcal{D}(B)$, is the regular $\Delta$-complex whose vertices are in correspondence with the irreducible components of $B^{=1}$ and whose $k$-faces correspond to connected components of $B_{i_0} \cap \ldots \cap B_{i_k}$.
\end{definition}

\begin{definition}
\label{rmk:dualcomplex} Let $(X,B,\bM.)$ be a generalized log Calabi--Yau pair (not necessarily dlt). Then $\mathcal{DMR}(X,B,\bM.)$ is the PL-homeomorphism type of the dual complex of a generalized dlt modification $(X',B',\bM.)$ of $(X,B,\bM.)$, i.e.,
\[\mathcal{DMR}(X,B,\bM.) \simeq_{\mathrm{PL}}\mathcal{D}(B').\]
\end{definition} 

\begin{remark}
The notation $\mathcal{DMR}$ stands for “Dual complex of a Minimal divisorial log terminal partial Resolution”. The definition is independent of the choice of the dlt modification by \cite[Proposition 11]{dFKX17}, up to PL-homemorphism. In Berkovich geometry, the cone over $\mathcal{DMR}(X,B)$ is also called (essential) skeleton of $(X,B)$; see, e.g., \cite[\S 3]{MMS2022}.
\end{remark}

\begin{definition}[Coregularity]\label{defn:coreg}
Let $c$ be a natural number. 
A generalized log canonical pair $(X,B,\bM.)$ has \emph{coregularity $c$} if some (any) generalized dlt modification $(X',B',\bM.)$ of $(X,B,\bM.)$ has a generalized log canonical center of dimension $c$, and $c$ is minimal among the dimensions of the generalized log canonical centers of $(X',B',\bM.)$.
Equivalently, $(X,B,\bM.)$ has coregularity $c$ if the dual complex $\mathcal{DMR}(X,B,\bM.)$ has dimension $\dim X -c -1$.
\end{definition}

We refer to \cite{FMP22, Mor22} for more details about the notion of coregularity.

\section{Preliminaries}

In this section, we collect some preliminaries about Cartier indices, Weil indices of generalized pairs, and numerically trivial moduli parts.
We prove some related technical statements that will be used in the proof of the main theorem.

\subsection{Cartier index of a numerically trivial divisor}

In this subsection, we prove some statements regarding the Cartier index of a numerically trivial divisor.

\begin{lemma}\label{lemma_Weil_torsion}
Let $D \sim_{\qq}0$ be a numerically trivial integral $\qq$-Cartier divisor on a normal proper variety $X$. If $H^0(X, \mathcal{O}_{X}(D))\neq 0$, then $D$ is Cartier and $D \sim 0$.
\end{lemma}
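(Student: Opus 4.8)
The plan is to exploit the numerical triviality of $D$ together with the hypothesis $H^0(X,\mathcal{O}_X(D))\neq 0$ to produce an effective divisor linearly equivalent to $D$, and then to argue that such a divisor must in fact be zero. First I would pick a nonzero section $s\in H^0(X,\mathcal{O}_X(D))$; since $D$ is an integral Weil divisor, this section gives an effective Weil divisor $E\coloneqq \dv(s)+D\geq 0$ with $E\sim D$. The point is then that $E$ is both effective and numerically trivial (being linearly, hence numerically, equivalent to $D$), so it has degree $0$ against every curve. I would conclude $E=0$: indeed, if some prime component $E_0$ of $E$ appeared with positive coefficient, one can restrict to a general complete intersection curve $C$ meeting $E_0$ but not contained in $\Supp E$ (using that $X$ is proper of dimension $\geq 1$, or handling $\dim X = 0$ trivially), and then $E\cdot C>0$, contradicting $E\equiv 0$. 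Hence $D\sim E=0$.

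Once $D\sim 0$, the divisor $\mathcal{O}_X(D)\cong\mathcal{O}_X$ is a line bundle, so $D$ is Cartier, which is the remaining assertion. I would phrase the intersection argument carefully since $X$ is only normal (not necessarily $\qq$-factorial): the cleanest route is to reduce to a surjective finite map from a curve, or to use that a nonzero effective Weil divisor on a normal proper variety has positive intersection with a suitable moving curve class — alternatively, pass to a resolution $\mu\colon X'\to X$, pull back $E$ and $D$, note $\mu^*D$ is still numerically trivial and effective (its non-$\mu$-exceptional part is $\mu_*^{-1}E$ plus effective exceptional stuff only if $E$ is $\qq$-Cartier, which it is since $E\sim D$ and $D$ is $\qq$-Cartier by hypothesis), and run the curve argument on the smooth projective model.

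The main obstacle I anticipate is the non-$\qq$-factoriality and merely-normal hypothesis: one must ensure that "effective and numerically trivial implies zero" is applied to a divisor that is genuinely $\qq$-Cartier so that intersection numbers with curves make sense. This is guaranteed here because $E\sim D$ and $D$ is assumed $\qq$-Cartier, so $E$ is $\qq$-Cartier too; I would make this explicit. A secondary subtlety is that $X$ is only assumed proper, not projective, so I would invoke Chow's lemma (or work with a projective birational modification) to have enough curves, or simply cite the standard fact that on a proper normal variety a $\qq$-Cartier effective divisor that is numerically trivial must vanish. With that in hand the proof is essentially two lines: effective $+$ $\equiv 0$ $\Rightarrow$ $=0$, so $D\sim 0$ and in particular $D$ is Cartier.
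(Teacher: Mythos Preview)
Your argument is correct. Both you and the paper produce the effective Weil divisor $E=\dv(s)+D\geq 0$ with $E\sim D$ and then argue $E=0$; the difference lies in how that last step is carried out.

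The paper exploits the stronger hypothesis $D\sim_{\qq}0$ rather than mere numerical triviality: since $E\sim_{\qq}0$, some multiple $mE$ is principal, say $mE=\dv(f)$; as $mE\geq 0$ this forces $f\in H^0(X,\mathcal{O}_X)=\kk$, hence $E=0$. This avoids any appeal to curves, Chow's lemma, or resolutions, and works uniformly on any proper normal variety. The paper then phrases the conclusion sheaf-theoretically: $s$ is nowhere vanishing on the smooth big open $U$, and reflexivity of $\mathcal{O}_X(D)$ propagates the trivialization to all of $X$.

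Your route through numerical triviality and intersection with moving curves is valid but heavier than necessary here, precisely because you discard the $\qq$-linear triviality and then have to recover enough projectivity (via Chow's lemma or a resolution) to make the curve argument rigorous. The payoff of your approach is that it would still work under the weaker hypothesis $D\equiv 0$ alone (on a projective $X$), whereas the paper's argument genuinely needs $D\sim_{\qq}0$. Since the lemma assumes both, the paper's shortcut is the cleaner choice.
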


\begin{proof}
Let $s \in \Gamma(X,\O X.(D))$ be a non-zero section.
Let $U$ be any smooth big open subset of $X$.
In particular, $D$ is Cartier along $U$.
Since $D \sim_{\qq} 0$ and $X$ is integral, $s$ is a nowhere vanishing section on $U$, inducing a trivialization of $\O X.(D)$ along $U$.
Since $\O X.(D)$ is a reflexive sheaf and $U$ is a big open subset, $s$ is a nowhere vanishing section trivializing $\O X.(D)$.
Thus, $D$ is a linearly trivial Cartier divisor.
\end{proof}

The following lemma shows that the Cartier index of a $\qq$-trivial $\qq$-Cartier divisor is independent of the birational model.
In particular, in order to control the index of a log Calabi--Yau pair, we are allowed to take dlt modifications and run minimal model programs.

\begin{lemma} \label{index_crepant_trace}
Let $X$ be a proper
normal variety, and let $D$ be a $\qq$-Cartier divisor with $D \sim _\qq 0$.
Also, let $X'$ be a proper normal variety that is birational to $X$, and let $D'$ be the crepant trace of $D$ on $X'$.
Then, for any integer $m$, $mD \sim 0$ if and only if $mD' \sim 0$.
\end{lemma}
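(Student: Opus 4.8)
The plan is to compare everything on a common resolution $X''$ with morphisms $p\colon X''\to X$ and $q\colon X''\to X'$. Since $D\sim_{\qq}0$, we have $D''\coloneqq p^*D\sim_{\qq}0$ as well, and hence $D''$ is numerically trivial. By definition of the crepant trace, $D'=q_*D''$, so the two pull-backs $p^*D$ and $q^*D'$ differ by a $q$-exceptional $\qq$-divisor: $p^*D = q^*D' + E$ with $E$ a $\qq$-divisor supported on the $q$-exceptional locus. The point is that $E$ must in fact be zero, or at least that $p^*D$ and $q^*D'$ agree as $\qq$-Cartier divisors after suitable normalization — this is exactly the statement that crepant trace is well-defined and compatible with pull-back, which is where I would invoke the negativity lemma.

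First I would set up the diagram and observe that both $D$ and $D'$ are $\qq$-Cartier with $D\sim_{\qq}0$, $D'\sim_{\qq}0$ (the latter because crepant trace of a $\qq$-trivial divisor is $\qq$-trivial: pushing forward a $\qq$-principal divisor gives a $\qq$-principal divisor). Then I would argue that $p^*D$ and $q^*D'$ are $\qq$-linearly equivalent on $X''$: indeed, writing $D\sim_{\qq} 0$ via a rational function $f$, i.e. $kD=\dv(f)$ for some integer $k$, we get $kp^*D=\dv(f\circ p)$ and similarly for $D'$ using the push-forward of $f$ (which is the same rational function under the birational identification of function fields), so $kp^*D$ and $kq^*D'$ are the divisors of the \emph{same} rational function $g$ on $X''$, hence equal. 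Therefore $p^*D = q^*D'$ as $\qq$-Cartier divisors on $X''$.

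Next I would use this to transfer linear equivalence. Suppose $mD\sim 0$, say $mD=\dv(h)$ for a rational function $h$ on $X$. Then $m\,p^*D = \dv(h\circ p)$, and since $p^*D=q^*D'$ we get $m\,q^*D'=\dv(h\circ p)$. Pushing forward by $q_*$: because $D'$ is $\qq$-Cartier, $q_*(m q^*D') = mD'$ (the pull-back of a $\qq$-Cartier divisor pushes forward to itself, up to the standard projection-formula identity $q_*q^*D'=D'$ for birational $q$), while $q_*\dv(h\circ p)=\dv(h\circ p\circ q^{-1})$ is the divisor of the corresponding rational function on $X'$. Hence $mD'\sim 0$. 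The reverse implication is symmetric, swapping the roles of $p$ and $q$. One can also phrase the conclusion more cleanly: $mD\sim 0$ iff $m\,p^*D\sim 0$ (pull-back by a birational morphism between normal proper varieties neither creates nor destroys linear triviality of a $\qq$-Cartier divisor, since $p_*\mathcal{O}_{X''}=\mathcal{O}_X$), and likewise $mD'\sim 0$ iff $m\,q^*D'\sim 0$, and the middle terms coincide.

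The main obstacle, and the step deserving the most care, is the identity $p^*D=q^*D'$ on $X''$ — more precisely, making sure the crepant trace is genuinely the $q$-push-forward of $p^*D$ and that no exceptional correction term survives. The safe way to handle this is: $q^*D' - p^*D$ is $q$-exceptional (since $q_*(p^*D)=D'$ by definition of the trace and $q_*q^*D'=D'$), it is $\qq$-Cartier, and it is $\qq$-linearly trivial on $X''$ (being the difference of two $\qq$-trivial divisors); a $\qq$-exceptional $\qq$-Cartier divisor that is $\qq$-linearly — in particular numerically — trivial over $X'$ must vanish by the negativity lemma applied to $q$. This gives $p^*D=q^*D'$ exactly, and the rest is the bookkeeping with rational functions sketched above. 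I would also remark that this lemma is precisely what licenses replacing $(X,B,\bM.)$ by any dlt modification or MMP output when computing the index, which is how it will be used later.
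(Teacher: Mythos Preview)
Your proof is correct and takes essentially the same route as the paper --- pass to a common resolution $X''$ with $p\colon X''\to X$, $q\colon X''\to X'$ and transfer the linear equivalence through $X''$ --- but with one extra step. You invest effort in proving the identity $p^*D=q^*D'$ on $X''$ (via the negativity lemma, or equivalently by tracking the rational function), whereas the paper bypasses this entirely: it simply observes that $mD\sim 0$ implies $m\,p^*D\sim 0$ by pull-back, and then that $mD'=q_*(m\,p^*D)\sim 0$ because push-forward along a birational morphism of normal varieties preserves linear triviality (argue on a big open set over which $q$ is an isomorphism and use the $S_2$ property). Your detour through $p^*D=q^*D'$ is not wrong --- and the negativity-lemma argument you give for it is clean --- but it is unnecessary for the lemma as stated; the paper's argument is shorter and uses nothing beyond the $S_2$ condition. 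On the other hand, the identity $p^*D=q^*D'$ you establish is a slightly stronger intermediate fact, and your phrasing ``$mD\sim 0 \iff m\,p^*D\sim 0 \iff m\,q^*D'\sim 0 \iff mD'\sim 0$'' makes the symmetry of the statement more transparent.
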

\begin{proof}
Let $X''$ be a common resolution of $X$ and $X'$, and let $p \colon X'' \rar X$ and $q \colon X'' \rar X'$ be the corresponding morphisms.
Let $D''$ be the crepant trace of $D$ on $X''$.
By the symmetry of the problem, it suffices to show the following:
if $m D \sim 0$ holds, then so does $m D' \sim 0$.
Fix such $m$.
By the projection formula, the linear equivalence is preserved by pull-back, and we have $mD'' \sim 0$.
Then, the linear equivalence is preserved by the push-forward $q_*$, as $q$ is a birational morphism of normal varieties.
Indeed, we may find a big open subset $U' \subset X'$ over which $q$ is an isomorphism.
Then, as $m D'' \sim 0$, we have $m D' \sim 0$ along $U'$.
Finally, by the $S_2$ property, $m D' \sim 0$ holds true on the whole $X'$.
\end{proof}

We have the following immediate corollary.

\begin{corollary}\label{lem:reddlt}
Let $(X',B',\bM.)$ be a proper generalized log Calabi--Yau pair that is crepant birational to the proper generalized log Calabi--Yau pair $(X,B,\bM.)$.
Then, for any integer $m$, $m(K_{X'}+B'+\bM X'.)\sim 0$ if and only if $m(K_X+B+\bM X.)\sim 0$.
\end{corollary}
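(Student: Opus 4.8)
The statement to prove is Corollary~\ref{lem:reddlt}: for crepant birational generalized log Calabi--Yau pairs $(X',B',\bM.)$ and $(X,B,\bM.)$, one has $m(K_{X'}+B'+\bM X'.)\sim 0$ iff $m(K_X+B+\bM X.)\sim 0$ for any integer $m$.

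\begin{proof}[Proof plan]
The plan is to reduce this directly to Lemma~\ref{index_crepant_trace}. First I would set $D \coloneqq K_X+B+\bM X.$ and $D' \coloneqq K_{X'}+B'+\bM X'.$. Since $(X,B,\bM.)$ is a generalized log Calabi--Yau pair, by definition $D \sim_\qq 0$, and $D$ is $\qq$-Cartier (this is part of the definition of a generalized log canonical pair: $K_X+B+\bM X.$ is $\qq$-Cartier). So the hypotheses of Lemma~\ref{index_crepant_trace} are met for $X$ and $D$.

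The only substantive point to check is that $D'$ is exactly the crepant trace of $D$ on $X'$ in the sense defined in the Notation section, i.e.\ that $D' = q_* p^* D$ for a common resolution $X''$ with $p\colon X'' \to X$, $q \colon X'' \to X'$. This is precisely the content of ``crepant birational'': the two generalized pairs being crepant birational means that on a common resolution the pullbacks of $K_X+B+\bM X.$ and $K_{X'}+B'+\bM X'.$ agree (this is how the b-divisor formalism of \cite{FS20} encodes crepancy, and it is also spelled out in the Notation section, where the trace of a generalized log Calabi--Yau pair on $X'$ is defined by taking $D = K_X+B+\bM X.$ and pushing forward $p^*D$). Hence $q_*p^*D = q_*p^*D' = D'$, the last equality because $p^*D'$ and $D'$ have the same push-forward under $q$ (they differ by a $q$-exceptional divisor, since $\bM.$ descends to $X''$ and $q_*$ kills $q$-exceptional divisors). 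So $D'$ is the crepant trace of $D$.

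With this identification in hand, Lemma~\ref{index_crepant_trace} applied to $X$, $X'$, $D$, $D'$ and the given integer $m$ yields: $mD \sim 0$ if and only if $mD' \sim 0$, which is exactly the assertion. I do not expect any real obstacle here; the only thing requiring a moment's care is unwinding the definition of crepant birational for \emph{generalized} pairs to confirm that the moduli b-divisor $\bM.$ poses no issue --- but since $\bM.$ is a fixed b-divisor and its traces $\bM X.$, $\bM X'.$ on the two models are by definition the push-forwards of a single descended divisor on a high enough model, everything is consistent and the reduction goes through verbatim. Thus the corollary is immediate from Lemma~\ref{index_crepant_trace}.
\end{proof}
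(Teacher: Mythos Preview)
Your approach is correct and is exactly what the paper intends: the corollary is stated as an immediate consequence of Lemma~\ref{index_crepant_trace}, and your reduction (setting $D=K_X+B+\bM X.$ and identifying $D'=K_{X'}+B'+\bM X'.$ as its crepant trace) is the intended argument. One small slip: where you write ``$q_*p^*D = q_*p^*D' = D'$'' you mean $q^*D'$ rather than $p^*D'$, since $p$ maps to $X$, not $X'$.
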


\subsection{Weil index of a generalized pair}
\label{subsec:Weil-index} 

In this subsection, we study the Weil index of coregularity 0 log Calabi--Yau pairs under adjunction.

\begin{definition}[Weil index of a generalized pair]\label{def:Weilindex}
Let $(X,B,\bM.)$ be a generalized log canonical pair. Let $\Lambda \subset \qq$ be the smallest set of positive rational numbers
such that
\begin{itemize}
    \item the coefficients of $B$ are contained in $\Lambda$; and
    \item we can write $\bM X'.=\sum_j \lambda_j M_j$, where $X'\rightarrow X$ is a model where 
    $\bM.$ descends, 
    $\lambda_j\in \Lambda$,
    and each $M_j$ is nef Cartier.
\end{itemize}
Then, the \emph{Weil index of the generalized pair $(X,B,\bM.)$} is the smallest positive integer for which $\lambda \Lambda \subset \nn$.
\end{definition}

\begin{example}
The Weil index of the log canonical pair $(X, B = \sum_{i} \frac{p_i}{q_i}B_i)$, whose coefficients $\frac{p_i}{q_i}$ are irreducible fractions is the least common multiple of the denominators $q_i$.
\end{example}

We show that the Weil index
of a log pair of coregularity 0 is stable under adjunction.
To this end, we will introduce some notation
and recall a statement from~\cite{FMP22}.

\begin{notation}\label{not:1}
Let $\Lambda$ be a finite subset of $[0,1]$, and $r$ be a non-negative rational number. We define the set
\[
D_\Lambda(r) \coloneqq \left\{ 
1-\frac{1}{m} + \frac{m_0r+\sum_{i=1}^k m_i\lambda_i}{m} \mid \lambda_i \in \Lambda \cup \{0, 1\}, \text{ and }
 m,m_i, k\in\zz_{> 0}
\right\} \cap [0,1].
\]
If $r=0$, then we set $D_\Lambda \coloneqq D_\Lambda(0)$.
If $\Lambda = \zz\left[\frac{1}{\lambda}\right]\cap [0,1]$ for some positive integer $\lambda$, then we denote the set $D_\Lambda(r)$ simply by $D_\lambda(r)$.
\end{notation}

The following lemma follows from~\cite[Lemma 3.2 and Remark 3.3]{FMP22}.

\begin{lemma}\label{lem:producing-lcy-p1}
Let $(X,B,\bM.)$ be a projective generalized log Calabi--Yau pair of coregularity $0$. Let $\Lambda$ be a finite subset of $[0,1]$, and $r$ be a non-negative rational number.
Assume that the coefficients of $B$ belong to $D_\Lambda$, and
$r$ is a coefficient of $B$.

Then, there exists a generalized log Calabi--Yau pair $(\pp^1,B_{\pp^1},\bN.)$
satisfying the following conditions:
\begin{itemize}
    \item the coefficients of $B_{\pp^1}$ belong to $D_\Lambda$;
    \item there is $q\in \pp^1$ such that
    ${\rm coeff}_q(B_{\pp^1})=1$; and 
    \item there is $p\in \pp^1$, with $p\neq q$, for which 
    ${\rm coeff}_p(B_{\pp^1}) \in D_\Lambda(r)$.
\end{itemize}
In particular, the following facts hold: 
\begin{enumerate}
    \item[(i)] if $\bM. =0$, then we can choose $\bN.=0$ as well; and
     \item[(ii)] if there exists a positive integer $\lambda$ such that $\Lambda \subseteq \zz\left[\frac{1}{\lambda}\right]\cap [0,1]$ and $\lambda \bM.$ is b-Cartier, then $\lambda \bN.$ is b-Cartier as well.
\end{enumerate}
\end{lemma}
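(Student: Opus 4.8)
\textbf{Proof plan for Lemma~\ref{lem:producing-lcy-p1}.}
The plan is to reduce to a $1$-dimensional situation by a sequence of standard reductions from the birational geometry of coregularity $0$ pairs, and then read off the coefficients from the adjunction formula on $\pp^1$. First I would pass to a $\qq$-factorial generalized dlt modification $(X',B',\bM.)$ of $(X,B,\bM.)$, which exists by the usual theory of generalized pairs and does not change the coregularity; by Corollary~\ref{lem:reddlt} this does not affect the conclusion, and it lets me assume the pair is dlt, so that $B'^{=1}$ is reduced and its intersection strata are well-behaved. Since the coregularity is $0$, the dual complex $\mathcal{D}(B')$ has dimension $\dim X-1$, so there is a zero-dimensional log canonical center; walking up the chain of strata, I can find a $1$-dimensional log canonical center $C$ of $(X',B')$ whose normalization is $\pp^1$ (it must be rational because it is the intersection of all but one of the components of $B'^{=1}$ through the point, and it still carries a coregularity $0$, hence log Calabi--Yau, adjunction structure). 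The component $D_0$ of $B'^{=1}$ corresponding to the last vertex not containing $C$ meets $C$, contributing a coefficient $1$ at the point $q\in C$.

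Second, I would perform iterated adjunction: restricting $K_{X'}+B'+\bM X'.$ to $C\cong\pp^1$, the generalized canonical bundle formula (or divisorial adjunction repeated $\dim X-1$ times) produces a generalized log Calabi--Yau pair $(\pp^1,B_{\pp^1},\bN.)$ with $(K_{X'}+B'+\bM X'.)|_C=K_{\pp^1}+B_{\pp^1}+\bN{\pp^1}.$. The content of \cite[Lemma 3.2 and Remark 3.3]{FMP22} is precisely the bookkeeping of coefficients under this process: each adjunction step either transfers a coefficient of $B'$ unchanged, or replaces it by an expression of the shape $1-\frac1m+\frac{m_0 r+\sum m_i\lambda_i}{m}$ coming from the different and the moduli part, which is exactly the membership $B_{\pp^1}\in D_\Lambda$ (with the distinguished coefficient at $p$ lying in $D_\Lambda(r)$ because the coefficient $r$ of $B$ gets carried along). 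The point $q$ keeps coefficient $1$ from $D_0$. For item (i), if $\bM.=0$ every step of the canonical bundle formula has trivial moduli part (the fibers are themselves log Calabi--Yau of coregularity $0$, so the moduli $b$-divisor of the CBF vanishes), giving $\bN.=0$. For item (ii), the $\lambda$-Cartier/$\lambda$-integrality is preserved at each step because the $m_i$ and $m_0$ are integers and $r,\lambda_i\in\zz[\frac1\lambda]$, and the moduli part of the CBF inherits $b$-Cartierness after clearing the index $\lambda$; this is the explicit content of \cite[Remark 3.3]{FMP22}.

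The main obstacle I expect is \emph{not} the coefficient computation but producing the right $1$-dimensional center with the coefficient-$1$ point: one must ensure that at each adjunction step the restricted pair is again generalized log Calabi--Yau of coregularity $0$ (so that the induction/descent is legitimate), which requires that the chosen log canonical center be \emph{minimal} and that dlt-ness be preserved under restriction to strata — this is where one invokes that components of $B'^{=1}$ are normal and that adjunction for generalized pairs behaves well along them. Once the geometry is set up, the bulk of the work is the combinatorial tracking of coefficients through $D_\Lambda$, which is exactly what is packaged in the cited results from \cite{FMP22}, so the proof reduces to citing \cite[Lemma 3.2 and Remark 3.3]{FMP22} after checking the hypotheses and identifying the coefficient $r$ as the one being propagated.
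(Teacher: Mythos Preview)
Your proposal and the paper arrive at the same place: the paper's entire proof is the one-line citation ``follows from \cite[Lemma~3.2 and Remark~3.3]{FMP22}'', and you end by reducing to that same citation, so on the bottom line you are aligned.

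That said, the informal sketch you give of what the argument behind \cite{FMP22} should look like has a real gap. You pick an arbitrary $1$-dimensional log canonical center $C$ of a dlt modification and then assert that ``the coefficient $r$ of $B$ gets carried along'' to some point $p\in C$. This is not automatic: a random minimal stratum of $B'^{=1}$ need not meet the component of $B'$ carrying the coefficient $r$, so iterated adjunction to $C$ could perfectly well yield a pair on $\pp^1$ with all coefficients in $D_\Lambda$ but none in $D_\Lambda(r)\setminus D_\Lambda$. The argument in \cite{FMP22} has to \emph{arrange} this contact, typically by running a suitable MMP/Mori fiber space step so that the prime divisor with coefficient $r$ meets the log canonical center to which one restricts, and then tracking $r$ through the different at that intersection. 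Your plan is fine as a pointer to the literature, but if you were to write the proof out rather than cite it, this is precisely the step where your outline would fail as stated.
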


Now, we turn to prove the statement about the Weil index under adjunction.

\begin{lemma}\label{lem:weil-index-under-adjunction}
Let $(X,B,\bM.)$ be a generalized
log canonical pair of Weil index $\lambda$.
Let $\lambda' \coloneqq \lcm(\lambda,2)$.
Let $S$ be a prime component of  $B \sups =1.$.
Assume that $S$ is projective.
Let $S^\nu \rightarrow S$ be its normalization.
Let $(S^\nu,B_{S^\nu},\bN .)$ be the generalized pair obtained by generalized adjunction.
Assume that $(S^\nu,B_{S^\nu},\bN.)$
is generalized log Calabi--Yau of coregularity $0$.
Then, the Weil index of the generalized pair $(S^\nu,B_{S^\nu},\bN .)$ divides $\lambda'$.

In particular, if $(X,B,\bM.)$ is a generalized log Calabi--Yau pair of Weil index $\lambda$ and coregularity $0$, then
the Weil index of $(S^\nu,B_{S^\nu},\bN.)$ divides $\lambda'$.
\end{lemma}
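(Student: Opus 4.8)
The plan is to track coefficients through generalized adjunction and show they all lie in a ring of the form $\zz[1/\lambda']\cap[0,1]$, together with the analogous statement for the moduli b-divisor $\bN.$. First I would set $\Lambda := \zz[1/\lambda]\cap[0,1]$, so that by definition of the Weil index the coefficients of $B$ lie in $\Lambda$ and $\lambda\bM.$ is b-Cartier. The key input is the generalized adjunction formula: writing $(K_X+B+\bM X.)|_{S^\nu} = K_{S^\nu}+B_{S^\nu}+\bN {S^\nu}.$, the coefficients of $B_{S^\nu}$ are governed by the usual formula involving the different, and Lemma~\ref{lem:producing-lcy-p1} (applied in the coregularity-0 situation, where passing to a fiber $\pp^1$ controls the coefficients via the sets $D_\Lambda(r)$) tells us that each coefficient of $B_{S^\nu}$ lies in $D_\Lambda(r)$ for $r$ a coefficient of $B$, hence $r\in\Lambda$. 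So the first main step is a direct computation: show $D_\Lambda(r)\subseteq \zz[1/\lambda']\cap[0,1]$ when $r\in\Lambda$, i.e. that every number $1-\tfrac1m+\tfrac{m_0 r+\sum m_i\lambda_i}{m}$ lying in $[0,1]$ actually has denominator dividing $\lambda'$. This is where the factor $2$ enters: a coefficient of the form $1-\tfrac1m+\tfrac{(\text{integer combination in }\Lambda)}{m}$ that happens to land in $[0,1]$ forces $m\mid 2$ (indeed if $m\geq 3$ the ``$1-\tfrac1m$'' term already pushes past available room unless the numerator cancels, and the only genuinely new denominator that can survive is $2$), so the result has denominator dividing $\lcm(\lambda,2)=\lambda'$.

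The second main step is the moduli part. By part (ii) of Lemma~\ref{lem:producing-lcy-p1}, since $\lambda\bM.$ is b-Cartier, the corresponding b-divisor on the auxiliary $\pp^1$ also has $\lambda$ times it b-Cartier; I would then push this through generalized adjunction to conclude that $\lambda'\bN.$ is b-Cartier. Here one must be a little careful: generalized adjunction for the moduli part may introduce a denominator of $2$ even when none was present before (this is the classical phenomenon that the moduli part of a fibration picks up a factor related to the multiple fibers / the ``$1/2$'' from a $\pp^1$ with two marked points), which is again exactly absorbed by replacing $\lambda$ with $\lambda'$. So after this step we know: the coefficients of $B_{S^\nu}$ lie in $\zz[1/\lambda']\cap[0,1]$, and we can write $\bN {S'}. = \sum_j \mu_j N_j$ with $N_j$ nef Cartier and $\mu_j\in \zz[1/\lambda']$. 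By the definition of the Weil index, this means the Weil index of $(S^\nu,B_{S^\nu},\bN.)$ divides $\lambda'$.

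For the ``in particular'' clause: if $(X,B,\bM.)$ is moreover generalized log Calabi--Yau of coregularity $0$, then by the very definition of coregularity some component $S$ of $B^{=1}$ exists with the adjoint pair $(S^\nu,B_{S^\nu},\bN.)$ again of coregularity $0$ (this is the standard behavior of coregularity under adjunction to an lc center, and it is exactly the hypothesis we are allowed to assume on $(S^\nu,B_{S^\nu},\bN.)$), so the general statement applies verbatim. The only real obstacle I anticipate is the bookkeeping in the first step: making the inclusion $D_\Lambda(r)\subseteq \zz[1/\lambda']$ genuinely rigorous, i.e. arguing that the membership condition ``$\in[0,1]$'' in the definition of $D_\Lambda(r)$ forces the denominator $m$ of the relevant expression to divide $2$ after clearing common factors with $\lambda$. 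One subtlety worth stating explicitly is that $r$ itself may be a half-integer multiple contributed by an earlier adjunction, which is why $r\in\Lambda=\zz[1/\lambda]\cap[0,1]$ already suffices and no separate treatment of $r$ is needed; the factor of $2$ is uniform and does not compound with the number of adjunctions performed. I would present this computation as a short self-contained lemma or inline it, and then the conclusion about the Weil index is immediate from Definition~\ref{def:Weilindex}.
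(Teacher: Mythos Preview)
Your proposal has a genuine gap in the first main step, and it stems from misreading what Lemma~\ref{lem:producing-lcy-p1} does and where the coregularity~$0$ hypothesis enters.

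The inclusion you want, $D_\Lambda(r)\subseteq \zz[1/\lambda']\cap[0,1]$ for $r\in\Lambda$, is simply false. Take $\Lambda=\{1\}$ (so $\lambda=1$, $\lambda'=2$) and $r=0$: then $1-\tfrac{1}{m}\in D_\Lambda(0)$ for every $m\geq 1$, so $D_\Lambda(0)$ contains $\tfrac{2}{3},\tfrac{4}{5},\tfrac{6}{7},\ldots$, none of which lie in $\zz[1/2]$. Your heuristic that ``membership in $[0,1]$ forces $m\mid 2$'' is not correct: the numerator $\sum m_i\lambda_i$ can be zero, and then $1-\tfrac{1}{m}$ sits in $[0,1]$ for all $m$. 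In other words, after ordinary adjunction the coefficients of $B_{S^\nu}$ lie in $D_\lambda$, and $D_\lambda$ genuinely contains elements of arbitrarily large denominator. The coregularity~$0$ hypothesis on $(S^\nu,B_{S^\nu},\bN.)$ is not decorative; it is precisely what rules these out, and your outline never uses it.

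The paper's argument runs differently. One first observes that $\bN.$ is just the restriction of $\bM.$, so $\lambda\bN.$ is b-Cartier with no extra factor of~$2$ (your second step overcomplicates this). For the boundary, one fixes a prime $P\subset S$ and writes $\mathrm{coeff}_P(B_S)=p_0/\lambda_0$ in lowest terms. If $X$ is smooth at $P$ the coefficient already lies in $\zz[1/\lambda]$. Otherwise $\mathrm{coeff}_P(\mathrm{Diff}_S(0))=1-\tfrac{1}{m}$ with $m\geq 2$, so $p_0/\lambda_0\geq \tfrac{1}{2}$; the case $=\tfrac{1}{2}$ gives $\lambda_0=2\mid\lambda'$. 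In the remaining case $p_0/\lambda_0>\tfrac{1}{2}$ one applies Lemma~\ref{lem:producing-lcy-p1} to the coregularity~$0$ pair $(S^\nu,B_{S^\nu},\bN.)$ with $r=p_0/\lambda_0$ (not to $(X,B,\bM.)$, and $r$ is a coefficient of $B_S$, not of $B$). This produces a log Calabi--Yau structure on $\pp^1$ with a coefficient~$1$ at one point, a coefficient in $D_\lambda(r)$ at another, and all remaining coefficients in $D_\lambda$. The inequality $r>\tfrac{1}{2}$ together with $\deg(B_{\pp^1})\leq 2$ forces all the auxiliary $m_s$ to equal~$1$, and then the degree identity $\deg(K_{\pp^1}+B_{\pp^1}+M_{\pp^1})=0$ unwinds to an equation expressing $p_0/\lambda_0$ as an integer combination of $1/\lambda$'s, whence $\lambda_0\mid\lambda$. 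The factor~$2$ in $\lambda'$ thus arises only from the borderline case $p_0/\lambda_0=\tfrac{1}{2}$, not from any set-theoretic inclusion of $D_\Lambda$ into $\zz[1/\lambda']$.
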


\begin{proof}
Up to passing to a dlt modification, we can assume that $(X,B,\bM.)$ is generalized dlt
and that $S=S^\nu$.
Indeed, the Weil index of a generalized log canonical pair coincides with the Weil index of any crepant dlt modification, and any dlt modification of $(X,B,\bM.)$ restricts to a dlt modification of $(S,B_S,\bN .)$. Notice that we may assume that the dlt modification is projective over the original model, thus the projectivity of $S$ is preserved.
Also, the statement is clear for what concerns $\bN.$, since it is the restriction of the b-divisor $\bM.$.
Thus, in the rest of the proof, we will focus on the boundary divisor $B_S$.
By adjunction, the coefficients of $B_S$ belong to $D_\lambda$; see ~\cite[proof of Lemma 3.3]{Bir19} where $p$ and $\mathfrak{R}$ in {\it loc. cit.} stand for $\lambda$ and $\zz[1/\lambda] \cap [0,1]$, respectively.
Now, let $P$ be a prime divisor on $S$.
We can write 
\[
{\rm coeff}_P(B_S)=\frac{p_0}{\lambda_0},
\]
where the fraction is irreducible.
We show that $\lambda_0$ divides $\lambda'$. 
If $X$ is smooth at $P$, then the statement is clear.
Thus, we have that 
${\rm coeff}_P(\mathrm{Diff}_S(0))=1-\frac{1}{m}$ for some integer $m\geq 2$, see~\cite[\S4.1]{Kol13}.
Then, we have that 
\[
{\rm coeff}_P(B_S) \geq 
{\rm coeff}_P(\mathrm{Diff}_S(0)) = 1-\frac{1}{m} \geq \frac{1}{2}.
\] 
If $\frac{p_0}{\lambda_0}=\frac{1}{2}$, then
$\lambda_0=2$ divides $\lambda'$.
From now on, we assume that $\frac{p_0}{\lambda_0}>\frac{1}{2}$.
We apply Lemma~\ref{lem:producing-lcy-p1} with $r=\frac{p_0}{\lambda_0}$.
Then, we can find a generalized pair structure
$(\pp^1, B_{\pp^1}+M_{\pp^1})$
and two distinguished points $p,q\in \pp^1$ satisfying the following conditions:
\begin{itemize}
    \item the Weil index of $M_{\pp^1}$ is $\lambda$;
    \item the divisor $B_{\pp^1}$ has coefficient 1 at $q$;
    \item we have that 
    \[
    {\rm coeff}_p(B_{\pp^1}) = 1-\frac{1}{m_p} + \frac{ \frac{ p_0}{\lambda_0}+ \sum_{i=1}^k \frac{m_ip_i}{\lambda}}{m_p};
    \]
    where $m_i,m_p, k \in \zz_{>0}$, and $p_i$ are non-negative integers with $0 \leq p_i \leq \lambda$; and 
    \item for every $s\in \pp^1 \setminus \{p,q\}$, we have that
    \[
    {\rm coeff}_s\left(B_{\pp^1}\right)=
    1-\frac{1}{m_s}+\frac{\sum_{i=1}^k\frac{m_ip_i}{\lambda}}{m_s},
    \] 
    where $m_i,m_p, k \in \zz_{>0}$, and $p_i$ are non-negative integers with $0 \leq p_i \leq \lambda$.
\end{itemize}
If $m_{s_0}\geq 2$ for some
$s_0\in \pp^1 \setminus \{p,q\}$, then we would have
\[
{\rm deg}(B_{\pp^1}) \geq 
{\rm coeff}_q(B_{\pp^1}) + 
{\rm coeff}_p(B_{\pp^1}) +
{\rm coeff}_{s_0}(\pp^1) >
1 + \frac{1}{2} + \frac{1}{2} = 2,
\] 
leading to a contradiction. 
Thus, we have that 
$m_s=1$ for every $s\in \pp^1 \setminus \{p,q\}$.
Taking the degree of $K_{\pp^1}+B_{\pp^1}+M_{\pp^1}$, we obtain a relation of the form
\[
    1-\frac{1}{m}+\frac{\frac{p_0}{\lambda_0} + \sum_{i=1}^k\frac{m_ip_i}{\lambda}}{m} + \sum_{j=1}^\ell \frac{n_jp'_j}{\lambda}=1, 
\]
where each $p_i$ and each $p'_j$
are positive integers. 
Hence, we may write
\[
p_0\lambda_0^{-1} = \left(\lambda-\sum_{i=1}^k m_ip_i -\sum_{j=1}^\ell mn_jp'_j\right)\lambda^{-1}.
\]
We conclude that $\lambda_0$ divides $\lambda$, and so $\lambda'$ does too. This finishes the proof.
\end{proof}

\subsection{Numerically trivial moduli part}\label{num_trivial_subsection}
In this subsection, we show two lemmas regarding generalized pairs with numerically trivial moduli part.

\begin{lemma}\label{cor_0_pic0}
Let $(X,B,\bM.)$ be a projective generalized log Calabi--Yau pair.
Assume one of the following two conditions:
\begin{itemize}
    \item[(i)] $(X,B,\bM.)$ has coregularity 0; or
    \item[(ii)] $X$ is rationally connected and of klt-type.
\end{itemize}
Let $c$ be the b-Cartier index of $\bM.$.
If $\bM X.$ is $\qq$-Cartier with $\bM X.\equiv 0$, then $c\bM. \sim 0$ as b-divisor.
\end{lemma}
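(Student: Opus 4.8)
The plan is to reduce the statement to a vanishing/torsion property of $\Pic^0$ or of a suitable cohomology group, using the two cases to guarantee the needed rational-connectedness. First I would pass to a birational model $X' \to X$ where $\bM.$ descends, so that $\bM X'. = M'$ is $\qq$-Cartier and $cM'$ is Cartier and nef; by Lemma~\ref{index_crepant_trace} (and the fact that being Cartier with $cM' \sim 0$ descends along birational morphisms of normal proper varieties via $S_2$), it suffices to prove $cM' \sim 0$ on $X'$. Since $M' \equiv 0$ and $cM'$ is Cartier, $cM'$ defines a numerically trivial line bundle, i.e.\ an element of $\Pic^0(X')$. The goal is then to show this class is trivial, for which it suffices that $\Pic^0(X')$ (or at least the subgroup generated by our class) is trivial; this in turn follows once $X'$ is rationally connected, since then $H^1(X',\oo_{X'}) = 0$ and, $X'$ being proper over a field of characteristic $0$, $\Pic^0(X')$ is a point.

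In case (ii), $X$ is already rationally connected and of klt type; I would check that a klt-type variety admits a small $\qq$-factorialization, and that a dlt modification / the model $X'$ on which $\bM.$ descends can be taken rationally connected as well — rational connectedness is a birational invariant for smooth proper varieties, and more generally is preserved under the birational modifications of klt-type varieties we use, so $X'$ stays rationally connected and the argument above applies. In case (i), coregularity $0$, I would invoke the structure theory of coregularity-zero log Calabi--Yau pairs: after a dlt modification $(X',B',\bM.)$, the reduced part $B'^{=1}$ contains a zero-dimensional log canonical center, and running an appropriate MMP (as in the strategy of \S\ref{sec:MainThm}) one reaches a model fibered over a base of strictly smaller dimension with rationally connected fibers; inductively, together with the fact that the total space of a fibration with rationally connected fibers over a rationally connected base is rationally connected, one concludes that the relevant model is rationally connected. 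Alternatively, and more cleanly, coregularity $0$ forces $X$ to be of Fano type in a suitable crepant model — more precisely, a log Calabi--Yau pair of coregularity $0$ is, after crepant birational modification, a generalized pair $(X',B',\bM.)$ with $B'^{=1}$ big, so $X'$ is of Fano type and hence rationally connected — and again $\Pic^0$ vanishes.

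The main obstacle will be case (i): making precise that coregularity $0$ forces (a crepant model of) $X$ to be rationally connected. The cleanest route is to cite the result that if $(X,B)$ is log Calabi--Yau of coregularity $0$ then there is a crepant birational model on which $B^{=1}$ supports a big divisor (this is the appeal to \cite{FS20} mentioned in the strategy section), whence that model is of Fano type, hence rationally connected, hence has trivial $\Pic^0$; combined with the birational invariance of the Cartier index (Lemma~\ref{index_crepant_trace}) and the descent of $\bM.$, this yields $c\bM. \sim 0$ as a b-divisor. One should be careful that "rationally connected of klt type" in (ii) is genuinely used only to get $H^1(\oo) = 0$ on the descended model, so the two cases truly unify into: on the model where $\bM.$ descends, $\Pic^0$ is trivial, therefore the numerically trivial Cartier divisor $cM'$ is linearly trivial, therefore $c\bM. \sim 0$.
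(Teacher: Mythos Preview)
Your overall strategy matches the paper's: pass to a smooth model $X'$ where $\bM.$ descends, argue that $X'$ is rationally connected (via \cite[Theorem 4.2]{FS20} for case (i), and \cite{HMc07} for case (ii)), and deduce that the numerically trivial Cartier divisor $c\bM X'.$ is linearly trivial. Two steps, however, are not correctly justified.

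First, and most importantly, a numerically trivial line bundle is \emph{not} automatically an element of $\Pic^0(X')$; it lies in $\Pic^\tau(X')$, and the quotient $\Pic^\tau/\Pic^0$ can be a non-trivial finite group. Thus $H^1(X',\oo_{X'})=0$ alone only gives that $c\bM X'.$ is torsion in $\Pic(X')$, not that it is trivial. The paper closes this gap by invoking the fact that a smooth proper rationally connected variety has no non-trivial finite \'etale covers (see, e.g., \cite[Corollary 4.18.(b)]{Debarre2001}); hence the cyclic \'etale cover associated with the torsion line bundle $c\bM X'.$ is trivial, forcing $c\bM X'.\sim 0$. You should add this step.

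Second, you assert $M'\equiv 0$ on $X'$ without proof. This requires an argument: the negativity lemma gives $E\coloneqq \pi^*\bM X.-\bM X'.\geq 0$ (since $\bM X'.$ is $\pi$-nef and $E$ is $\pi$-exceptional), and then $-E\equiv \bM X'.$ is nef while $E$ is effective, so $E=0$ and $\bM X'.=\pi^*\bM X.\equiv 0$. The paper records this as a one-line appeal to the negativity lemma.

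Finally, your discussion of rational connectedness in case (i) is more circuitous than necessary; the paper simply cites \cite[Theorem 4.2]{FS20} directly for the rational connectedness of (a resolution of) a coregularity $0$ generalized log Calabi--Yau pair.
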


\begin{proof}
Let $\pi \colon X' \rar X$ be a projective resolution of singularities of $X$ where $\bM.$ descends.
By the negativity lemma \cite[Lemma 3.39]{KM98}, we have $\bM X'. =\pi^*\bM X. \equiv 0$.
Let $c$ be the Weil index of $\bM X'.$.
Since $X'$ is smooth, then $c\bM X'.$ is Cartier.
By \cite[Theorem 4.2]{FS20} and \cite{HMc07} (note that just \cite{HMc07} suffices for case (ii)), $X'$ is rationally connected, so $H^1(X', \mathcal{O}_{X'})=0$ and $\Pic(X') = \mathrm{NS}(X')$. 
This implies that $c\bM X'.$ is a torsion Cartier divisor, i.e., $c\bM X'. \sim_{\qq} 0$.
Since a rationally connected variety has no non-trivial \'{e}tale covers (see, e.g., \cite[Corollary 4.18.(b)]{Debarre2001}), the \'{e}tale cover associated with $c\bM X'. \sim_{\qq} 0$ must be trivial, which means that $c\bM X'. \sim 0$.
\end{proof}

We have the following immediate corollary.

\begin{corollary} \label{M:torsion}
Let $(X,B,\bM.)$ be a projective generalized log Calabi--Yau pair of coregularity 0.
Assume that $\bM X.$ is $\qq$-Cartier with $\bM X. \equiv 0$.
Let $\lambda$ be a positive integer such that $\lambda \bM.$ is b-Cartier.
Then, Theorem \ref{introthm:main-thm} holds for $(X,B,\bM.)$ and $\lambda$ if and only if so does for the pair $(X,B)$ and $\lambda$.
\end{corollary}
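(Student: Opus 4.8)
The statement asserts that, under the coregularity-0 hypothesis and assuming $\bM X.\equiv 0$ is $\qq$-Cartier with $\lambda\bM.$ b-Cartier, proving $\lambda(K_X+B+\bM X.)\sim 0$ is equivalent to proving $\lambda(K_X+B)\sim 0$. The plan is to use Corollary \ref{cor_0_pic0} to show that the moduli b-divisor contributes nothing to the linear equivalence class, and then to transfer the statement along a suitable birational model where everything is Cartier.

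First I would apply Corollary \ref{cor_0_pic0}: since $(X,B,\bM.)$ has coregularity $0$ and $\bM X.$ is $\qq$-Cartier with $\bM X.\equiv 0$, if $c$ denotes the b-Cartier index of $\bM.$ then $c\bM.\sim 0$ as a b-divisor; in particular $\bM X.\sim_\qq 0$ and, on any model $X'$ where $\bM.$ descends, $c\bM X'.\sim 0$. Note that $c$ divides $\lambda$, since $\lambda\bM.$ is b-Cartier by hypothesis, so in fact $\lambda\bM.\sim 0$ as a b-divisor as well; in particular $\lambda\bM X.\sim 0$ on $X$ (being the trace of a b-divisor that is linearly trivial on the distinguished model, pushed down — here one uses that $X$ is normal and $S_2$, as in the proof of Lemma \ref{index_crepant_trace}). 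Then from $\lambda\bM X.\sim 0$ we get $\lambda(K_X+B+\bM X.)\sim\lambda(K_X+B)$, and the equivalence of the two linear-triviality statements is immediate: $\lambda(K_X+B+\bM X.)\sim 0$ if and only if $\lambda(K_X+B)\sim 0$. This also makes sense of the implicit assertion that $(X,B)$ itself is a log Calabi--Yau pair in the appropriate sense, since $K_X+B\sim_\qq K_X+B+\bM X.\sim_\qq 0$, and $(X,B)$ is log canonical because $(X,B,\bM.)$ is and $\bM.$ is pseudo-effective (indeed nef after descending).

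The main obstacle I anticipate is purely bookkeeping: one must be careful that the b-Cartier index $c$ of $\bM.$ genuinely divides the Weil index $\lambda$ appearing in the statement of Theorem \ref{introthm:main-thm} — but this is built into Definition \ref{def:Weilindex}, since the $\lambda_j$ in the expression $\bM X'.=\sum_j\lambda_j M_j$ are required to lie in $\Lambda$, hence $\lambda\lambda_j\in\nn$ and $\lambda\bM X'.$ is Cartier, i.e.\ $\lambda\bM.$ is b-Cartier and $c\mid\lambda$. With that observation in place, all that remains is to descend $\lambda\bM.\sim 0$ from the model $X'$ (where it follows from $c\bM X'.\sim 0$ and $c\mid\lambda$) down to $X$ via push-forward along a birational morphism of normal varieties, exactly as in Lemma \ref{index_crepant_trace}, and to record that $K_X+B$ and $K_X+B+\bM X.$ differ by the linearly trivial integral divisor $\lambda\bM X.$ after multiplication by $\lambda$. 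No genuinely hard step is needed — the content is entirely in Corollary \ref{cor_0_pic0}, which does the work of killing the moduli part.
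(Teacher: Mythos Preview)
Your proposal is correct and follows essentially the same approach as the paper: invoke Lemma~\ref{cor_0_pic0} to conclude $\lambda\bM.\sim 0$ as a b-divisor (using that the b-Cartier index $c$ divides $\lambda$), then observe that this makes $K_X+B+\bM X.$ and $K_X+B$ interchangeable for the purposes of Theorem~\ref{introthm:main-thm}. One cosmetic slip: you phrase the conclusion as ``$\lambda(K_X+B+\bM X.)\sim 0$ iff $\lambda(K_X+B)\sim 0$'', but Theorem~\ref{introthm:main-thm} is about $\lambda'=\lcm(\lambda,2)$, not $\lambda$; this is harmless since $\lambda\mid\lambda'$ and your argument gives $\lambda\bM X.\sim 0$, hence $\lambda'\bM X.\sim 0$ as well.
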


\begin{proof}
By Lemma \ref{cor_0_pic0}, we have $\lambda \bM. \sim 0$ as b-divisor.
Then, the claim follows, as the statement of Theorem \ref{introthm:main-thm} concerns linear equivalence up to multiplication by $\lambda'=\lcm(\lambda,2)$.
\end{proof}

\subsection{Coregularity and finite quotients}\label{quotients_subsection}
In this subsection, we show that the coregularity is preserved by finite maps.

\begin{proposition}\label{prop_quotient_coreg}
Let $(X,\Delta_X)$ and $(Y,\Delta_Y)$ be two log canonical pairs with a generically finite surjective morphism $f \colon (X,\Delta_X) \rar (Y,\Delta_Y)$.
Assume that $K_X + \Delta_X=f^*(K_Y+\Delta_Y)$.
Then, we have $\mathrm{coreg}(X,\Delta_X)=\mathrm{coreg}(Y,\Delta_Y)$.
\end{proposition}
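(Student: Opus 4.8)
The plan is to prove the two inequalities $\mathrm{coreg}(Y,\Delta_Y) \le \mathrm{coreg}(X,\Delta_X)$ and $\mathrm{coreg}(X,\Delta_X) \le \mathrm{coreg}(Y,\Delta_Y)$ separately, using the characterization of coregularity in terms of the dimensions of log canonical centers on a dlt modification. Throughout, the crepant hypothesis $K_X+\Delta_X = f^*(K_Y+\Delta_Y)$ is what keeps discrepancies under control: pulling back valuations along $f$ multiplies log discrepancies by ramification indices along the relevant divisors, so log canonical places upstairs and downstairs correspond, and in particular a log canonical center of $(X,\Delta_X)$ dominates a log canonical center of $(Y,\Delta_Y)$ and vice versa.

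\textbf{First inequality.} For $\mathrm{coreg}(Y,\Delta_Y) \le \mathrm{coreg}(X,\Delta_X)$, I would start from a minimal log canonical center $W \subset X$ realizing the coregularity of $(X,\Delta_X)$, so $\dim W = \dim X - \mathrm{coreg}(X,\Delta_X) - 1$ after passing to a dlt modification. Its image $f(W) \subset Y$ is a log canonical center of $(Y,\Delta_Y)$: this is where the crepant condition enters, via the standard fact that images of log canonical centers under crepant finite morphisms are log canonical centers (one can see this by comparing log discrepancies of the valuation cutting out $W$ and its pushforward, or by the behaviour of non-klt centers under crepant pullback, e.g. as in Kollár's singularities of the MMP book). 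Since $f$ is generically finite, $\dim f(W) = \dim W$, and $\dim Y = \dim X$, so $f(W)$ has the same dimension as $W$; hence the coregularity of $(Y,\Delta_Y)$ — being $\dim Y - 1$ minus the \emph{maximal} dimension of a log canonical center — is at most $\dim X - 1 - \dim W = \mathrm{coreg}(X,\Delta_X)$.

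\textbf{Second inequality and main obstacle.} For the reverse inequality I would take a minimal log canonical center $V \subset Y$ realizing $\mathrm{coreg}(Y,\Delta_Y)$ and produce a log canonical center of $(X,\Delta_X)$ of the same dimension lying over it. The natural candidate is a component of $f^{-1}(V)$, or more precisely an irreducible component of the preimage meeting the open locus where things are well-behaved; again $\dim$ is preserved because $f$ is finite onto its image. The subtlety — and I expect this to be the main obstacle — is that preimages of log canonical centers need not automatically be log canonical centers in the non-normal or highly ramified situation, and one must be careful that the center one finds is genuinely \emph{minimal}, i.e. that passing to $X$ cannot produce a strictly larger-dimensional log canonical center than expected (which would \emph{decrease} the coregularity and is exactly what the first inequality rules out, so the two halves must be reconciled). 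The clean way around this is to argue entirely on compatible dlt modifications: take a dlt modification $(Y',\Delta_{Y'}) \to (Y,\Delta_Y)$, form the normalization $X'$ of the fiber product (or of $X$ in the appropriate function field), run a further dlt modification if needed so that $(X',\Delta_{X'})$ is dlt with $f' \colon X' \to Y'$ still crepant and finite, and then use that on a dlt pair the log canonical centers are exactly the strata of $\lfloor \Delta \rfloor$. A stratum of $(Y',\Delta_{Y'})$ then pulls back under the finite crepant $f'$ to a union of strata of $(X',\Delta_{X'})$ of the same dimension (the ramification only affects coefficients, and since $f'$ is crepant the reduced part is preserved), and conversely every stratum of $(X',\Delta_{X'})$ dominates a stratum of $(Y',\Delta_{Y'})$; combining with the first inequality gives equality of the minimal stratum dimensions, hence equality of coregularities. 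I would single out the construction of the compatible crepant dlt modification on $X$ as the step requiring the most care.
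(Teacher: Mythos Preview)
Your overall strategy matches the paper's --- compare log canonical centers on compatible dlt models via a finite crepant map, using \cite[Proposition 5.20]{KM98} --- and you correctly single out the compatible dlt construction as the crux. But two things go wrong. First, the dimensional bookkeeping is off: by Definition~\ref{defn:coreg} the coregularity \emph{is} the minimum dimension of a log canonical center on a dlt model, so a minimal such center $W$ has $\dim W=\mathrm{coreg}(X,\Delta_X)$, not $\dim X-\mathrm{coreg}-1$. With the correct definition your first paragraph does not give $\mathrm{coreg}(Y)\le\mathrm{coreg}(X)$: pushing $W$ (through the dlt modification and then $f$) down to $Y$ produces a small log canonical center of $(Y,\Delta_Y)$ itself, but the minimum dimension of log canonical centers on $Y$ can be strictly smaller than $\mathrm{coreg}(Y,\Delta_Y)$ --- take the vertex of the cone over an elliptic curve --- so this inequality points the wrong way.

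Second, and this is the gap you anticipated but did not close, you cannot ``run a further dlt modification so that $(X',\Delta_{X'})$ is dlt with $f'\colon X'\to Y'$ still crepant and finite'': a dlt modification is birational, and once you blow up $X'$ the map to $Y'$ is no longer finite. The paper's fix is a Stein-factorization sandwich. After reducing to $f$ finite and replacing $(Y,\Delta_Y)$ by a dlt modification (pulling $X$ along via normalized base change, so $f$ stays finite), take a dlt modification $\tilde X\to X$ and form the Stein factorization $\tilde X\to\tilde Y\to Y$. The first arrow is finite, so by \cite[Proposition 5.20]{KM98} the minimum lc-center dimensions on $\tilde X$ and $\tilde Y$ agree; the second is crepant birational with target already dlt, so that minimum on $\tilde Y$ is at most $\mathrm{coreg}(\tilde Y)=\mathrm{coreg}(Y)$, which in turn equals the minimum lc-center dimension on $Y$. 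Chaining this with the easy observation that the minimum lc-center dimension can only increase when passing from $X$ to its dlt model $\tilde X$, together with one more application of \cite[Proposition 5.20]{KM98} to $f\colon X\to Y$, forces all the quantities to coincide.
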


\begin{proof}
Throughout the proof, given a log canonical pair $(W,\Gamma)$, we denote by $\mathrm{dmLCC}(W,\Gamma)$ the minimum among the dimensions of the log canonical centers of $(W,\Gamma)$. Note that \begin{equation}\label{eq:dmlcoreg}\mathrm{dmLCC}(W,\Gamma) \leq \mathrm{coreg}(W,\Gamma),\end{equation} and the equality holds if $(W,\Gamma)$ is dlt. In particular, if $(W,\Gamma)$ and $(W',\Gamma')$ are crepant birational dlt pairs, then 
\begin{equation}\label{eq:dmlII}
\mathrm{coreg}(W,\Gamma)=\mathrm{dmLCC}(W,\Gamma)=\mathrm{dmLCC}(W',\Gamma')=\mathrm{coreg}(W',\Gamma')\end{equation} by \cite[Proposition 11]{dFKX17}.

Since the coregularity is preserved by crepant birational morphisms, we may replace $(X,\Delta_X)$ with the pair induced on the Stein factorization of $X \rar Y$.
In particular, we may assume that $f$ is finite.

Let $(Y',\Delta_{Y'})$ be a dlt modification of $(Y,\Delta_Y)$, and let $X'$ denote the normalization of the main component of $X \times_Y Y'$, and let $\pi \colon X' \rar X$ be the induced morphism.
Define $K_{X'} + \Delta_{X'}=\pi^*(K_X+\Delta_X)$.
Then, by \cite[Proposition 5.20]{KM98}, $(X',\Delta_{X'})$ is a pair.
Since the coregularity is preserved by crepant birational morphisms, we may replace $(X,\Delta_X)$ and $(Y,\Delta_Y)$ with $(X',\Delta_{X'})$ and $(Y',\Delta_{Y'})$, respectively.
So, in the following, we may assume that $(Y,\Delta_Y)$ is dlt.

Now, let $(\tilde X, \Delta_{\tilde{X}})$ denote a dlt modification of $(X,\Delta_X)$, and let $(\tilde{Y},\Delta_{\tilde Y})$ denote the pair induced on the Stein factorization of $\tilde X \rar Y$.

Since $(\tilde X,\Delta_{\tilde X}) \rar (X ,\Delta_X)$ and $(\tilde Y,\Delta_{\tilde Y}) \rar (Y ,\Delta_Y)$ are crepant birational and by \cite[Proposition 5.20]{KM98}, we have the inequalities
\[
\mathrm{dmLCC}(Y,\Delta_Y) = \mathrm{dmLCC}(X,\Delta_X) \leq \mathrm{dmLCC}(\tilde X, \Delta_{\tilde X}) = \mathrm{dmLCC}(\tilde Y, \Delta_{\tilde Y}).
\]
By \eqref{eq:dmlcoreg} and \eqref{eq:dmlII}, since $(Y, \Delta)$ is dlt and crepant to $(\tilde Y, \Delta_{\tilde Y})$, we have \[\mathrm{dmLCC}(Y,\Delta_Y)\geq \mathrm{dmLCC}(\tilde Y, \Delta_{\tilde Y}).\]
Putting together the two chains of inequalities, we conclude that 
\[\mathrm{coreg}(Y, \Delta_Y)=\mathrm{dmLCC}(Y,\Delta_Y) = \mathrm{dmLCC}(X,\Delta_X) = \mathrm{dmLCC}(\tilde X, \Delta_{\tilde X}) = \mathrm{coreg}(X, \Delta_X).\]
\end{proof}

If $f\colon (X,\Delta_X) \rar (Y,\Delta_Y)$ is a finite Galois morphism, the following stronger version of Proposition \ref{prop_quotient_coreg} holds. See Definition \ref{rmk:dualcomplex} for the definition of $\mathcal{DMR}(X,\Delta_X)$.

\begin{proposition}\label{prop:dualcomplexquotient}
Let $f \colon (X,\Delta_X) \rar (Y,\Delta_Y)$ be a finite Galois morphism of log canonical pairs $(X,\Delta_X)$ and $(Y,\Delta_Y)$ with Galois group $G$.
Assume that $K_X + \Delta_X=f^*(K_Y+\Delta_Y)$. Then, the group $G$ acts by PL-homeomorphisms on the dual complex $\mathcal{D}(X,\Delta_X)$, and there exists a PL-homeomorphism 
\[\mathcal{DMR}(Y,\Delta_Y) \simeq_{\mathrm{PL}}\mathcal{DMR}(X,\Delta_X)/G.\]
In particular, we have $\mathrm{coreg}(X,\Delta_X)=\mathrm{coreg}(Y,\Delta_Y)$.
\end{proposition}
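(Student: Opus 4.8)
The ``in particular'' clause follows at once from the PL-homeomorphism (and is already contained in Proposition~\ref{prop_quotient_coreg}), so the substance is the identification $\mathcal{DMR}(Y,\Delta_Y)\simeq_{\mathrm{PL}}\mathcal{DMR}(X,\Delta_X)/G$. The plan is to first pass to a $G$-equivariant dlt model. Using functorial (hence $G$-equivariant) resolution of singularities together with a $G$-equivariant run of the MMP over $X$ --- equivalently, the MMP for the quotient pair on $X/G$ --- one obtains a $G$-equivariant $\qq$-factorial dlt modification $(\tilde X,\Delta_{\tilde X})\to(X,\Delta_X)$. Since $\Delta_{\tilde X}=f^*\Delta_Y-R$ with $R$ the $G$-invariant ramification divisor, we have $g^*\Delta_{\tilde X}=\Delta_{\tilde X}$ with coefficients preserved for every $g\in G$; hence $G$ permutes the prime components of $\Delta_{\tilde X}^{=1}$ and the connected components of their multiple intersections, so it acts simplicially --- in particular by PL-homeomorphisms --- on $\mathcal{D}(\tilde X,\Delta_{\tilde X})\simeq_{\mathrm{PL}}\mathcal{DMR}(X,\Delta_X)$. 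This settles the first assertion. Now let $(\bar Y,\Delta_{\bar Y})$ be the quotient pair, characterized by $K_{\tilde X}+\Delta_{\tilde X}=\bar f^*(K_{\bar Y}+\Delta_{\bar Y})$ for $\bar f\colon\tilde X\to\bar Y=\tilde X/G$. It is log canonical by \cite[Proposition 5.20]{KM98} and crepant birational to $(Y,\Delta_Y)$, so $\mathcal{DMR}(\bar Y,\Delta_{\bar Y})\simeq_{\mathrm{PL}}\mathcal{DMR}(Y,\Delta_Y)$ by \cite[Proposition 11]{dFKX17}. We are thus reduced to proving $\mathcal{D}(\tilde X,\Delta_{\tilde X})/G\simeq_{\mathrm{PL}}\mathcal{DMR}(\bar Y,\Delta_{\bar Y})$, the left-hand quotient being the topological quotient of the simplicial $G$-action.

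The combinatorics is straightforward: a Riemann--Hurwitz computation on coefficients shows that a prime component of $\Delta_{\tilde X}$ has coefficient $1$ exactly when its image component of $\Delta_{\bar Y}$ does, so the vertices of $\mathcal{DMR}(\bar Y,\Delta_{\bar Y})$ are the $G$-orbits of the vertices of $\mathcal{D}(\tilde X,\Delta_{\tilde X})$; and since $\bar f$ is Galois, $G$ acts transitively on the components of the preimage of each component of $\Delta_{\bar Y}^{=1}$, and more generally on the strata of $\mathcal{D}(\tilde X,\Delta_{\tilde X})$ lying over a fixed stratum downstairs. So the only real issue is topological: identifying the topological quotient of $|\mathcal{D}(\tilde X,\Delta_{\tilde X})|$ with the dual complex of a dlt modification of $\bar Y$.

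The main obstacle is that $(\bar Y,\Delta_{\bar Y})$ need \emph{not} be dlt: a crepant finite quotient of a dlt pair may fail to be dlt --- e.g.\ $(\mathbb{A}^2,\{x=0\}+\{y=0\})$ modulo the involution $x\leftrightarrow y$ has crepant quotient $(\mathbb{A}^2,\{p=0\}+\frac{1}{2}\{s^2=4p\})$ with $s=x+y$, $p=xy$, which is log canonical but not dlt, the two boundary curves being tangent at the origin, a log canonical center. So one cannot just divide $\mathcal{D}(\tilde X,\Delta_{\tilde X})$ by $G$. The way I would get around this is to build a common good model: a $G$-equivariant projective birational morphism $(\tilde X',\Delta_{\tilde X'})\to(\tilde X,\Delta_{\tilde X})$ with $(\tilde X',\Delta_{\tilde X'})$ crepant, $\qq$-factorial and dlt \emph{and} with the quotient $(\bar Y',\Delta_{\bar Y'})=(\tilde X'/G,\,\cdot)$ dlt too; one produces it by alternately taking a dlt modification downstairs, normalizing the base change upstairs, and $G$-equivariantly resolving the resulting failure of dlt-ness (which is supported over the ramification locus and whose extraction leaves the PL-homeomorphism type of the dual complex unchanged), iterating until both sides are dlt, the process terminating because the log canonical centers and the relevant discrepancies eventually stabilize. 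For such a model $\mathcal{D}(\bar Y',\Delta_{\bar Y'})$ is literally the topological quotient $|\mathcal{D}(\tilde X',\Delta_{\tilde X'})|/G$; note that some $g\in G$ may carry a cell of $\mathcal{D}(\tilde X',\Delta_{\tilde X'})$ onto itself while reversing it (this already happens in the example above, where the $1$-simplex is flipped), so the quotient is realized only after an induced subdivision --- which is exactly why one lands on a PL-homeomorphism rather than a simplicial isomorphism. Chaining this with $\mathcal{D}(\tilde X,\Delta_{\tilde X})\simeq_{\mathrm{PL}}\mathcal{D}(\tilde X',\Delta_{\tilde X'})$, with $\mathcal{DMR}(\bar Y)\simeq_{\mathrm{PL}}\mathcal{D}(\bar Y',\Delta_{\bar Y'})$, and with the reductions above completes the proof. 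An alternative to the common-good-model step is to realize $|\mathcal{D}(\tilde X,\Delta_{\tilde X})|$ as a $G$-equivariant deformation retract of a punctured neighborhood of $\Supp(\Delta_{\tilde X}^{=1})$ inside a $G$-invariant snc open subset of $\tilde X$ meeting every log canonical center, and then pass to the quotient, which meets every log canonical center of $\bar Y$. Either way, I expect that making the equivariant resolution and MMP interact cleanly with the dlt-ification downstairs will be the technical heart of the argument.
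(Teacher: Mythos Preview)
The paper does not give a self-contained argument here: its entire proof is the assertion that \cite[Theorem F]{MMS2022} goes through verbatim once the quasi-\'etale hypothesis there is replaced by the crepant hypothesis $K_X+\Delta_X=f^*(K_Y+\Delta_Y)$. So there is no detailed proof in the paper to compare yours against line by line.

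Your outline correctly isolates the genuine crux---that the quotient of a $G$-equivariant dlt model is typically only log canonical, not dlt, as your tangent-curves example illustrates---and the combinatorial reduction (strata downstairs are $G$-orbits of strata upstairs, via Riemann--Hurwitz on the coefficient-$1$ locus) is right. The gap is exactly where you yourself place it: the construction of a ``common good model'' by alternately dlt-ifying downstairs and equivariantly resolving upstairs. You give no mechanism forcing termination (``the log canonical centers and the relevant discrepancies eventually stabilize'' is an assertion, not an argument), nor do you verify that each pass preserves the PL type on the side that was already dlt. Your deformation-retract alternative is likewise only gestured at. So as written this is an honest sketch with a declared gap rather than a proof.

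For comparison, the route the paper invokes is genuinely different. The cited reference realizes $\mathcal{DMR}$ as the essential skeleton inside the Berkovich analytification (cf.\ the remark following Definition~\ref{rmk:dualcomplex}), a construction that is intrinsic and model-independent. The $G$-action and the quotient map are then visible directly on Berkovich spaces, and the identification of skeletons follows from the behaviour of weight functions under finite pullback---so one never needs a model that is simultaneously dlt upstairs and downstairs, and the termination problem you ran into simply does not arise. Your direct approach may well be completable, but it would require real work at precisely the point you flagged; the Berkovich route buys you functoriality for free.
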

\begin{proof}
The same argument of \cite[Theorem F]{MMS2022} continues to work if we replace the assumption in {\it loc.\ cit.}, namely that $f$ is not ramified in codimension $1$, with the current crepant assumption $K_X + \Delta_X=f^*(K_Y+\Delta_Y)$. 
\end{proof}

\section{Fractional and moduli divisors}\label{sec:fractionalpart}

In this section, we prove an inductive statement for the main theorem when the fractional or moduli parts are non-trivial.

\begin{proposition} \label{fractional:part}
Let $(X,B,\bM.)$ be a projective generalized log Calabi--Yau pair of coregularity 0 and Weil index $\lambda$.
Suppose that $B^{< 1}+\bM X. \nequiv 0$ and that Theorem \ref{introthm:main-thm} holds in dimension less than $\dim X$.
Then $\lambda' (K_X + B +\bM X.) \sim 0$, where $\lambda'=\lcm(\lambda,2)$.
\end{proposition}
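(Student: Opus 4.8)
The strategy is the inductive argument from birational geometry outlined in \S\ref{sec:MainThm}. Since $B^{<1}+\bM X.\nequiv 0$, by \cite{FS20} we may first reduce to the case where $B^{=1}$ fully supports a big divisor: after a crepant birational modification this is Corollary~\ref{lem:reddlt}, so the index is unchanged. Then we run a $(K_X+B^{=1})$-MMP. Because $K_X+B+\bM X.\sim_\qq 0$ and $B^{<1}+\bM X.$ is $\qq$-linearly equivalent to $-(K_X+B^{=1})$, and $B^{=1}$ supports a big divisor, this MMP terminates with a Mori fiber space $g\colon Y\rightarrow Z$ with $\dim Z<\dim X$. Write $(Y,B_Y,\bM.)$ for the pushforward; by Corollary~\ref{lem:reddlt} it suffices to prove $\lambda'(K_Y+B_Y+\bM Y.)\sim 0$, and the Weil index is still (a divisor of) $\lambda$.

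\textbf{Adjunction to a horizontal component.} Since $B^{=1}$ fully supports a big divisor and we ran a $(K_Y+B_Y^{=1})$-MMP to a fiber space over $Z$, some component $S$ of $B_Y^{=1}$ dominates $Z$ and is $g$-ample (it must be $g$-positive since $-(K_Y+B_Y)\equiv 0$ forces $B_Y^{=1}$ to be relatively positive, and bigness guarantees a relatively ample component survives). As $(Y,B_Y)$ is dlt, $S$ is normal, so we perform generalized adjunction $K_S+B_S+\bN S.=(K_Y+B_Y+\bM Y.)|_S$. By inversion of adjunction and since $(Y,B_Y,\bM.)$ is generalized log Calabi--Yau of coregularity $0$, the pair $(S,B_S,\bN.)$ is again generalized log Calabi--Yau; its coregularity is still $0$ (one log canonical center of $(Y,B_Y)$ of dimension $0$ lies on $S$ since the dual complex has maximal dimension and $S$ is a vertex whose star is top-dimensional). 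By Lemma~\ref{lem:weil-index-under-adjunction}, the Weil index of $(S,B_S,\bN.)$ divides $\lambda'=\lcm(\lambda,2)$. Since $\dim S=\dim X-1$, the inductive hypothesis (Theorem~\ref{introthm:main-thm} in lower dimension) gives $\lambda'(K_S+B_S+\bN S.)\sim 0$.

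\textbf{Lifting the section via vanishing.} It remains to promote the section on $S$ to a section of $\lambda'(K_Y+B_Y+\bM Y.)$ on $Y$. Set $N\coloneqq \lambda'(K_Y+B_Y+\bM Y.)$ and consider the restriction sequence
\[
0\to \mathcal{O}_Y(N-S)\to \mathcal{O}_Y(N)\to \mathcal{O}_S(N|_S)\to 0,
\]
which is exact because $S$ is normal and $N$, $N-S$ are the reflexive hulls (here one checks $N-S=\lambda'(K_Y+B_Y-\frac{1}{\lambda'}S+\bM Y.)+\text{(Cartier)}$, so that $N-S$ restricted to $S$ recovers $\lambda'(K_S+B_S+\bN S.)$ up to the adjunction formula and one must track the remaining Weil part carefully). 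Now $N-S-K_Y\equiv -\,(\text{effective supported on }B_Y)-S-\bM Y.$, and since $S$ is $g$-ample while everything else is $g$-trivial, $-(N-S-K_Y)$ is $g$-nef and $g$-big; together with the klt-ness of the relevant pair away from $S$ and $\bM Y.$ nef, a relative Kawamata--Viehweg vanishing over $Z$ gives $R^1g_*\mathcal{O}_Y(N-S)=0$. Since $\dim Z<\dim X$ and the base has the same structure (or is a point), pushing forward we conclude $H^1(Y,\mathcal{O}_Y(N-S))=0$, hence $H^0(Y,\mathcal{O}_Y(N))\twoheadrightarrow H^0(S,\mathcal{O}_S(N|_S))\neq 0$. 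Thus $N$ has a nonzero section; since $N\equiv 0$ as well, Lemma~\ref{lemma_Weil_torsion} (applied on a suitable resolution, or directly on $Y$ once we know $N$ is integral) forces $N\sim 0$, i.e. $\lambda'(K_Y+B_Y+\bM Y.)\sim 0$, and by Corollary~\ref{lem:reddlt} the same holds on $X$.

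\textbf{Main obstacle.} The delicate point is the bookkeeping in the restriction sequence: ensuring $\lambda'(K_Y+B_Y+\bM Y.)$ and $\lambda'(K_Y+B_Y+\bM Y.)-S$ are precisely the reflexive sheaves for which the sequence stays exact, and that the restriction to $S$ is exactly $\lambda'(K_S+B_S+\bN S.)$ — this is where the computation of \S\ref{subsec:Weil-index} (Lemma~\ref{lem:weil-index-under-adjunction}) that $\lambda'$ clears all denominators under adjunction is essential. The second subtlety is verifying the hypotheses of relative Kawamata--Viehweg vanishing: one needs the coefficient $\frac{1}{\lambda'}$ drop on $S$ to keep a klt pair and the negativity of $N-S-K_Y$ to be genuinely $g$-nef and $g$-big, which uses that $S$ is $g$-ample and $B^{=1}$ fully supports a big divisor.
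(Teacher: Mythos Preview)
Your strategy coincides with the paper's: reduce via \cite{FS20} so that $B^{=1}$ fully supports a big divisor, run a $(K_X+B^{=1})$-MMP to a Mori fiber space $g\colon Y\to Z$, adjoin to a horizontal $S\subset B_Y^{=1}$, invoke Lemma~\ref{lem:weil-index-under-adjunction} and induction on $S$, then lift the section via relative vanishing and Lemma~\ref{lemma_Weil_torsion}. Two points, however, are not actually settled in your write-up.

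The first is the one you yourself flag as the main obstacle: exactness of the restriction sequence. Knowing that $\lambda'(K_S+B_S+\bN S.)$ is integral on $S$ does not by itself make the sequence exact on $Y$; you need $\lambda'(K_Y+B_Y+\bM Y.)$ to be Cartier in codimension $2$ along $S$. The paper proves this by a local computation: at a codimension-$2$ point of $Y$ on $S$ the local class group is cyclic of some order $m$, generated by a boundary divisor $S'$; writing $\lambda'(K_Y+B_Y+\bM Y.)\sim\mu S'$ locally and restricting to $S$ (where $S'|_S\sim\tfrac{1}{m}\{0\}$), the linear triviality of $\lambda'(K_S+B_S+\bN S.)$ forces $m\mid\mu$, so $\mu S'$ is Cartier. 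Then \cite[\S 2.41 and Lemma 2.42]{Bir19} gives the short exact sequence. The second is the vanishing step: your expression for $N-S-K_Y$ has a sign error, and $(Y,B_Y)$ need not be dlt after the MMP (only $(Y,B_Y^{=1})$ is, which is what makes $S$ normal). The clean formulation is $N-S\sim_{\qq,g}K_Y+(B_Y-S+\bM Y.)$; since $\rho(Y/Z)=1$ and $B_Y^{<1}+\bM Y.\equiv_g -(K_Y+B_Y^{=1})$ is $g$-ample, the divisor $B_Y-S+\bM Y.=(B_Y^{<1}+\bM Y.)+(B_Y^{=1}-S)$ is $g$-ample and $Y$ is klt, so relative Kawamata--Viehweg gives $R^1g_*\O Y.(N-S)=0$. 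Combined with $g_*\O Y.(N-S)=0$ (as $N-S\sim_{\qq,g}-S$ with $S$ $g$-ample and $\dim Z<\dim Y$), pushing forward the sequence yields $g_*\O Y.(N)\simeq g_*\O S.(N|_S)$ and hence the nonzero global section; you do not need (and do not directly get) $H^1(Y,\O Y.(N-S))=0$.
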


\begin{proof}
We proceed by induction on the dimension.
The base case is $\dim X=1$;
by the assumption $B^{< 1}+\bM. \nequiv 0$, it follows that $X = \pr 1.$.
Then, the claim follows immediately.
Thus, in the rest of the proof, we will address the inductive step, and we will assume that Theorem \ref{introthm:main-thm} is known in lower dimension.

By Corollary \ref{lem:reddlt}, we may replace $(X,B,\bM.)$ in its crepant birational class, as long as we control the Weil index of the new pair.
Thus, by taking a $\qq$-factorial generalized dlt modification, then applying \cite[Theorem 4.2]{FS20}, and finally by taking again a $\qq$-factorial generalized dlt modification, we may assume that $X$ is $\qq$-factorial, $(X,B,\bM.)$ is generalized dlt, and $B\sups =1.$ fully supports a big divisor.
Notice that, by the properties of the birational transformations in \cite[Theorem 4.2]{FS20} and dlt modifications, all the new divisors extracted in the process appear in $B$ with coefficient 1.
Thus, the Weil index $\lambda$ remains unchanged by this reduction.
Similarly, the property $B \sups <1. + \bM X. \nequiv 0$ is preserved.

Now, we run a $(K_X + B^{=1})$-MMP with scaling, which terminates with a Mori fiber space $g \colon Y \to Z$, as $B \sups <1. + \bM X. \nequiv 0$.
Let $(Y, B_Y,\bM.)$ be the induced generalized log Calabi--Yau pair, which is not necessarily generalized dlt.
We observe that the pair $(Y, B^{=1}_{Y})$ is dlt, and $B^{=1}_{Y} \neq 0$ dominates $Z$ since $B^{=1}_{Y}$ supports a big divisor.
Furthermore, the Weil index $\tilde \lambda$ of $(Y,B_Y,\bM.)$ divides the Weil index $\lambda$ of $(X,B,\bM.)$, since we may have contracted some components of $B \sups <1.$.

Let $S$ be an irreducible component of $B^{=1}_{Y}$ dominating $Z$.
Notice that $S$ is normal, since $(Y, B^{=1}_{Y})$ is dlt.
The generalized pair $(S, B_{S}, \bN.)$ obtained by generalized adjunction
\begin{equation}\label{eq:adjunction}
    K_{S} + B_S + \bN S. \sim_{\qq} (K_Y+ B_Y+\bM Y.)|_{S}
\end{equation}
is log Calabi--Yau, and its Weil index divides $\lambda'=\lcm(\lambda,2)$ by Lemma \ref{lem:weil-index-under-adjunction}.
By induction on the dimension, we obtain 
\[
\lambda' (K_{S} + B_S + \bN S.) \sim 0.
\]

The coefficients of $B_S$ and $\bN S.$ control the coefficients of $\mathrm{Diff}_S(0)$, as we explain in what follows.
By \cite[\S 3.35]{Kol13}, at the codimension 2 points of $X$ contained in $S$, $X$ has cyclic singularities.
Then, given a prime divisor $P$ in $S$, an \'{e}tale local neighbourhood of a general point $p \in P$ is isomorphic to
\[
(p \in (X, B, \bM.))\simeq (0 \in (\Af^2=(x,y), (x=0) + c(y=0)))/(\zz/m\zz)\times \Af^{\dim X -2},
\]
where $S=(x=0)$ and $S'=(y=0)$.
Since the class group of $Z \coloneqq \Af^2/(\zz/m\zz)$ is generated by $S'$, there exists an integer $\mu$ such that locally
\begin{equation}\label{eq:linear-equiv-toric}
\lambda'(K_X+B+\bM X.) \sim \mu S'.
\end{equation}
By adjunction, $S'|_S \sim \frac{1}{m}\{0\}$.
Since the denominators of the coefficients of $B_S$ and $\bN S.$ divide $\lambda'$, $\lambda'(K_{S} + B_S + \bN S.)$ is a Weil divisor on the smooth germ $S$ of $p$, and it is $\qq$-linearly trivial by \eqref{eq:adjunction}.
Therefore, $\lambda'(K_{S} + B_S + \bN S.)$ is actually a trivial Cartier divisor at $p$, and $\lambda'(K_X+B+\bM X.)|_S\sim 0$. 
Since we write
\[
0 \sim \lambda'(K_X+B+M)|_S \sim 
\mu S'|_S \sim \frac{\mu}{m}\{0\},
\] 
we conclude that $m$ divides $\mu$.
In particular, we have that $\mu S'$ is a Cartier divisor, as $m$ is the order of the class group of $X$ at $\{0\}$.
By the linear equivalence~\eqref{eq:linear-equiv-toric}, we conclude that the divisor
$\lambda'(K_X+B+M)$ is Cartier in codimension 2. 

Then, by \cite[\S 2.41 and Lemma 2.42]{Bir19}, we have the following short exact sequence
\begin{equation}\label{ses_fiber_space2}
0 \rar \O Y.(\lambda'(K_{Y}+B_Y + \bM Y.)-S) \rar \O Y.(\lambda'(K_{Y}+B_Y+ \bM Y.)) \rar \O S.(\lambda'(K_{S}+B_{S}+ \bN S.)) \rar 0.
\end{equation}
Since $\lambda'(K_{Y}+B_Y + \bM Y.)-S\sim_{\qq,g} -S$, the divisor $-S$ is $g$-ample, and $\dim Z < \dim Y$,
we have
\[
g_*\O Y.(\lambda'(K_{Y}+B + \bM Y.)-S)=0.
\]
Similarly, we write 
\[
\lambda'(K_{Y}+B_Y + \bM Y.)-S\sim_{\qq,g} -S \sim_{\qq,g} K_Y +(B_Y-S+\bM Y.).
\]
Note that $Y$ is klt and $B_Y-S+\bM Y. = B^{<1}_Y+\bM Y. + (B^{=1}_Y - S)$ is $g$-ample, since $g$ is a Mori fiber space and the divisor $B_Y^{<1}+\bM Y.$ is $g$-ample.
Thus, by the relative version of Kawamata--Viehweg vanishing, we have
\[
R^1g_*\O Y.(\lambda'(K_{Y}+B_Y + \bM Y.)-S)=0.
\]
Therefore, by pushing forward \eqref{ses_fiber_space2} via $g$, we obtain 
\[
g_* \O Y.(\lambda'(K_{Y}+B_Y+ \bM Y.)) \simeq g_* \O S.(\lambda'(K_{S}+B_{S}+ \bN S.)).
\]
Now, taking global sections, we have
\begin{equation} \label{eq:sections}
H^0(Y, \O Y.(\lambda'(K_{Y}+B+ \bM Y.))) = H^0(S, \O S.(\lambda'(K_{S}+B_{S}+ \bN S.)))=H^0(S, \O S.) \neq 0.
\end{equation}
By Lemma \ref{lemma_Weil_torsion}, \eqref{eq:sections} implies that $\lambda'(K_{Y}+B_Y+ \bM Y.) \sim 0$.
Therefore, by Corollary \ref{lem:reddlt}, we conclude that $\lambda' (K_X + B_X +\bM X.) \sim 0$.
\end{proof}

\section{Index of log Calabi--Yau pairs and orientability of dual complexes}\label{section reduced} 
In this section, we prove the main theorem of this article.

\subsection{Orientability of dual complexes}\label{sec:orientability}
In this subsection, we recall the concept of pseudo-manifold and dual complex, and we prove some facts about their orientability for log Calabi--Yau pairs. 

\begin{definition}[Pseudo-manifold]\label{defn:pseudo}
A topological space $T$ is called a $n$-dimensional \emph{pseudo-manifold with
boundary} if it admits a triangulation $\mathcal{T}$ satisfying the following conditions:
\begin{enumerate}
    \item (pure dimension) $T = |\mathcal{T}|$ is the union of all $n$-simplices;
    \item (non-branching) every $(n - 1)$-simplex is a face of precisely one or two $n$-simplices; and
    \item (strong connectedness) for every pair of $n$-simplices $\sigma$ and $\sigma'$ in $T$, there is a sequence of $n$-simplices
    \[\sigma = \sigma_0, \sigma_1, \ldots, \sigma_l = \sigma'\]
    such that the intersection $\sigma_i \cap \sigma_{i+1}$ is a $(n-1)$-simplex for all $i$.
\end{enumerate}
The boundary of $T$, denoted by $\partial T$, is the union of all the $(n-1)$-simplices that are faces of only one $n$-simplex.
We say that $T$ is a \emph{closed pseudo-manifold} if $\partial T = \emptyset$.
\end{definition}
\begin{lemma}\label{lem:orientability}
Let $T$ be a $n$-dimensional pseudo-manifold with
boundary, then $H^n(T, \zz) = \zz$ or $0$. If $\partial T \neq \emptyset$, then $H^n(T, \zz)=0$.
\end{lemma}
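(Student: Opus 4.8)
The statement is a purely topological fact about pseudo-manifolds, so the plan is to compute the top-dimensional simplicial cohomology directly from a triangulation $\mathcal{T}$ realizing the pseudo-manifold structure. First I would set up the simplicial cochain complex with $\zz$ coefficients and focus on the boundary map $\partial_n \colon C_n(\mathcal{T},\zz) \to C_{n-1}(\mathcal{T},\zz)$, whose transpose $\delta^{n-1}$ has cokernel $H^n(T,\zz)$. Since $T$ has pure dimension $n$, there are no $(n+1)$-simplices, so $H^n(T,\zz) = C^n(\mathcal{T},\zz)/\operatorname{im}(\delta^{n-1}) = \coker(\partial_n)^\vee$ up to the usual identifications; equivalently, by the universal coefficient theorem it suffices to understand $H_n(T,\zz) = \ker(\partial_n)$ and $H_{n-1}$, but working directly with cochains is cleanest.

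The key step is to analyze an $n$-cochain $\varphi = \sum_\sigma a_\sigma \, \sigma^\vee$ (sum over $n$-simplices $\sigma$, with $a_\sigma \in \zz$) that represents a cohomology class, and to exploit conditions (2) and (3). The non-branching condition (2) says every $(n-1)$-simplex $\tau$ is a face of one or two $n$-simplices. I would argue as follows: a cochain $\varphi$ is automatically a cocycle (no $(n+1)$-cells), so $H^n = C^n/\delta C^{n-1}$. Choosing compatible orientations, the coboundary $\delta(\tau^\vee)$ of an $(n-1)$-cochain is, up to sign, $\sigma^\vee - \sigma'^\vee$ when $\tau$ is a face of exactly two $n$-simplices $\sigma,\sigma'$, and $\pm\sigma^\vee$ when $\tau$ is a face of exactly one $n$-simplex $\sigma$. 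In the closed case ($\partial T = \emptyset$), every $(n-1)$-simplex is a face of exactly two $n$-simplices, so modulo coboundaries one can use the strong connectedness (3) to propagate the coefficient: starting from one $n$-simplex and moving across shared $(n-1)$-faces via the chain $\sigma = \sigma_0, \ldots, \sigma_l = \sigma'$, each coefficient $a_{\sigma_i}$ becomes forced to equal $\pm a_{\sigma_0}$. Hence $H^n(T,\zz)$ is cyclic, generated by the class of a single $\sigma^\vee$; depending on whether the signs can be made globally consistent (the orientable case) this class has infinite order giving $\zz$, or it is $2$-torsion but since we are over $\zz$ and the group is cyclic on one generator the relation forces it to be either $\zz$ (orientable) — and one checks directly it cannot be finite cyclic of order $>1$ in a way compatible with cochains over $\zz$; the non-orientable possibility collapses to $0$. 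For the second assertion, if $\partial T \neq \emptyset$ there exists an $(n-1)$-simplex $\tau_0$ that is a face of exactly one $n$-simplex $\sigma_0$; then $\delta(\tau_0^\vee) = \pm \sigma_0^\vee$, so $\sigma_0^\vee$ is a coboundary, and by strong connectedness every generator $\sigma^\vee$ is a coboundary, forcing $H^n(T,\zz) = 0$.

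The main obstacle is bookkeeping the signs correctly so as to distinguish the $\zz$ case from the $0$ case without inadvertently producing spurious torsion: one must argue that the cokernel of $\partial_n^\vee$ on a cyclic group is either free of rank $1$ or zero. The clean way is to pass to $\zz/2$ coefficients first to see $H^n(T,\zz/2\zz) = \zz/2\zz$ always holds for a closed pseudo-manifold (all signs disappear, strong connectedness forces the coefficient constant, non-branching makes it a cocycle condition that is vacuous on the fundamental class), and then lift: the integral $H^n$ surjects onto $\zz/2\zz$ after tensoring, is cyclic by the propagation argument, hence is $\zz$ or $\zz/2^k\zz$; a direct inspection of the relations coming from the two-to-one incidence shows no odd or higher $2$-power torsion can appear, and the only relation is either trivial (orientable, giving $\zz$) or $2a = 0$ type which over a cyclic group generated by that element cannot hold nontrivially unless the generator is zero — but we showed it surjects onto $\zz/2\zz$, so in the non-orientable case we must re-examine and conclude the integral group is $0$ precisely when orientability fails. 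I would cite the standard references on pseudo-manifolds (e.g. the discussion in \cite{KX16} and \cite{Matthews}) for the sign analysis, since this is classical, and keep the proof in the paper to the short argument via strong connectedness and non-branching as above.
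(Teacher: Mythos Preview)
The paper gives no argument at all---its proof is a bare citation to Hatcher, \S3.3, p.\,238---so there is nothing to compare at the level of strategy. Your direct simplicial approach is the standard one and begins correctly: since there are no $(n{+}1)$-simplices, $H^n = C^n/\operatorname{im}\delta^{n-1}$; the non-branching condition makes every coboundary relation of the form $[\sigma^\vee]=\pm[\sigma'^\vee]$ (interior face) or $[\sigma^\vee]=0$ (boundary face), and strong connectedness then shows the quotient is cyclic on any single $[\sigma_0^\vee]$. Your treatment of the case $\partial T\neq\emptyset$ is fine.

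There is, however, a genuine error in your handling of the closed non-orientable case. You assert that the cyclic group ``cannot be finite cyclic of order $>1$'' and that ``the non-orientable possibility collapses to $0$'', but this is false for \emph{cohomology}: for a closed non-orientable $n$-pseudo-manifold one has $H^n(T,\zz)\cong\zz/2\zz$, not $0$. The simplest witness is $H^2(\mathbb{RP}^2,\zz)=\zz/2\zz$. Your own $\zz/2\zz$-coefficient step already shows $H^n(T,\zz/2\zz)=\zz/2\zz$, and via the universal coefficient theorem this forces $H^n(T,\zz)\neq 0$, directly contradicting the conclusion you are aiming for. Concretely, in the non-orientable case a closed chain of $n$-simplices returning to $\sigma_0$ with a sign flip gives the relation $2[\sigma_0^\vee]=0$ and no further relations, so the quotient is exactly $\zz/2\zz$.

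What Hatcher actually proves on the cited page is the statement for top \emph{homology}: $H_n(T,\zz)$ is $\zz$ or $0$. There the dual analysis works cleanly---an $n$-cycle $\sum a_\sigma\sigma$ has all $a_\sigma=\pm a_{\sigma_0}$ by non-branching plus strong connectedness, and if the signs are globally inconsistent then no nonzero choice of $a_{\sigma_0}$ gives a cycle, so $H_n=0$. This is the version you should write down. The lemma as stated (with $H^n$) is literally false, though harmless for the paper: Definition~\ref{defn:orient} only uses the condition ``$H^n(T,\zz)=\zz$'', and the later arguments work over $\qq$ or $\kk$, where the $\zz/2\zz$ discrepancy vanishes.
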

\begin{proof}
See \cite[\S3.3, p.238]{H02}.
\end{proof}
\begin{definition}[Orientable pseudo-manifold]\label{defn:orient}
A $n$-dimensional closed pseudo-manifold $T$ is \emph{orientable} if $H^n(T, \zz) = \zz$.
\end{definition}

\begin{lemma}
The dual complex $\mathcal{D}(B)$ of a dlt log Calabi--Yau pair $(X, B)$ is a pseudo-manifold with boundary.
\end{lemma}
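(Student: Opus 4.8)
The plan is to verify the three defining properties of a pseudo-manifold with boundary (pure dimension, non-branching, strong connectedness) for the regular $\Delta$-complex $\mathcal{D}(B)$, using standard properties of dlt pairs and adjunction. Set $\dim X = n+1$, so that $\mathcal{D}(B)$ has dimension $\leq n$; the coregularity $0$ hypothesis is \emph{not} needed for this lemma (it only enters to guarantee $\dim \mathcal{D}(B) = n$, hence pure dimension at the top), so in fact I should be careful: the statement as phrased is about an arbitrary dlt log Calabi--Yau pair, and the cleanest route is to reduce to the coregularity $0$ case or to interpret ``pseudo-manifold with boundary'' relative to the actual dimension $d = \dim \mathcal{D}(B)$. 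I will take the latter reading: $\mathcal{D}(B)$ is a pseudo-manifold of dimension $d = \dim \mathcal{D}(B)$.

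First, for pure dimension: I would recall that $\mathcal{D}(B)$ is built from log canonical centers of the dlt pair $(X,B)$, and by the dlt condition these centers stratify nicely; a maximal face corresponds to a minimal lc center (a connected component of an intersection $B_{i_0} \cap \cdots \cap B_{i_k}$ that is itself an lc center with no deeper strata through its generic point). Every face is contained in a maximal one because lc centers of a dlt pair form a poset in which every element lies below a minimal one. For the non-branching property: let $\tau$ be a $(d-1)$-face corresponding to a component $W$ of $B_{i_0}\cap\cdots\cap B_{i_{d-1}}$ which is a minimal lc center of dimension $\dim X - d$. I would perform adjunction: repeatedly restricting $(X, B)$ to the components $B_{i_0}, \dots$ gives a dlt log Calabi--Yau pair $(W, B_W)$ with $B_W$ reduced along the locus where further components of $B^{=1}$ meet $W$. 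The $d$-faces containing $\tau$ then correspond to components of $B_W^{=1}$, i.e.\ to the divisorial part of the different; since $(W, B_W)$ is a log Calabi--Yau pair and $-K_W \sim_\qq B_W$ with $B_W$ reduced (being a divisor on $W$ coming from adjunction of the dlt pair) — and more precisely $\dim W = 1$ when $\tau$ has dimension $d-1$ in a $d$-dimensional complex only if the lc center is a curve, hence $W \cong \pp^1$ with exactly two marked points, or $W$ is an elliptic curve or similar with no reduced boundary (no $d$-face above $\tau$, so $\tau \subset \partial \mathcal{D}(B)$). Thus each $(d-1)$-face is a face of exactly one or two $d$-faces, and $\partial \mathcal{D}(B)$ is the union of those lying in only one — matching the definition. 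The key input here is that a log Calabi--Yau curve is either $(\pp^1, p_1 + p_2)$ or an elliptic curve with empty boundary, so the different supports $0$ or $2$ points.

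For strong connectedness, I would argue by connectedness of log canonical centers: given two $d$-faces $\sigma, \sigma'$ corresponding to minimal lc centers $Z, Z'$ of the dlt pair, the union of \emph{all} lc centers is connected (this is the Kollár--Shokurov connectedness principle for dlt log Calabi--Yau pairs, or follows from $-(K_X+B)\sim_\qq 0$), and one can pass from $Z$ to $Z'$ through a chain of lc centers each meeting the next; at the level of the dual complex, ``meeting'' along a codimension-one stratum translates to sharing a $(d-1)$-face. More carefully, strong connectedness should be deduced after slicing by a general hyperplane if $\dim \mathcal{D}(B) > 0$, to reduce to the curve case, or one cites the general structural result of de Fernex--Kollár--Xu that $\mathcal{D}(B)$ is a ``$\Delta$-complex'' enjoying these properties. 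I expect the main obstacle to be the non-branching step: one must argue carefully that \emph{exactly} one or two top faces sit above each codimension-one face, which boils down to the adjunction computation identifying the local structure of $(X,B)$ along a minimal lc center with that of $\pp^1$ with two points or an elliptic-type curve with none — and to the fact that the dlt hypothesis forbids three components of $B^{=1}$ from meeting transversally along a curve that is a minimal lc center of codimension making the corresponding face codimension one. This is essentially the content of the adjunction theory in \cite[\S4.1, \S3.35]{Kol13} and the dual complex formalism of \cite{dFKX17}, which I would invoke rather than reprove.
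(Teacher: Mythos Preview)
The paper does not prove this directly: it cites \cite[Theorem 4.1.4]{NX16}, remarking that Nicaise--Xu's argument for degenerations of Calabi--Yau varieties works verbatim for log Calabi--Yau pairs. Your closing fallback --- invoking the structural results of \cite{dFKX17} and \cite[\S 4]{Kol13} rather than reproving them --- is therefore essentially what the paper does. Your attempted direct verification, though, has a genuine gap in the non-branching step. You assert that the stratum $W$ corresponding to a $(d-1)$-face satisfies $\dim W = 1$, so that the classification of log Calabi--Yau curves applies. But a $(d-1)$-face corresponds to an lc center of codimension $d$ in $X$, hence $\dim W = \dim X - d = c+1$, where $c$ is the coregularity (using $d = \dim \mathcal{D}(B) = \dim X - c - 1$). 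Your reduction to a curve is thus only valid when $c=0$; for $c>0$, $(W, B_W)$ is a higher-dimensional dlt log Calabi--Yau pair, and bounding the number of components of $B_W^{=1}$ by two requires instead the theorem that the non-klt locus of a connected dlt log Calabi--Yau pair has at most two connected components, together with the observation that these components are pairwise disjoint (a nonempty intersection would yield an lc center of dimension $<c$). The curve argument you give is precisely the $c=0$ instance of this.

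There is also a smaller gap in the pure dimension step: you argue that every face lies in some maximal face, but not that all maximal faces have the same dimension --- that needs the $\pp^1$-linkage of minimal lc centers, \cite[Theorem 4.40]{Kol13}. Relatedly, the case ``$W$ an elliptic curve with no $d$-face above $\tau$'' that you list under non-branching is actually excluded by pure dimension (such a $\tau$ would itself be top-dimensional), and is not a boundary face as you write: boundary $(d-1)$-faces lie in exactly one $d$-face, not zero.
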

\begin{proof}
See \cite[Theorem 4.1.4]{NX16}.\footnote{The authors of \cite{NX16} work in the context of degenerations of Calabi--Yau varieties, but their proof works verbatim for log Calabi--Yau pairs.}
\end{proof}

\begin{proposition}[Orientability of dual complexes]\label{lem:pseudomflddualcomplex}
Let $(X, B)$ be a projective dlt log Calabi--Yau pair of coregularity 0 with $B=B \sups =1.$.
Then, the dual complex $\mathcal{D}(B)$ is an orientable closed pseudo-manifold if and only if $K_{X}+ B \sim 0$.
\end{proposition}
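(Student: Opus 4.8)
The plan is to identify $H^0(X,\mathcal{O}_X(K_X+B))$ with the top singular cohomology of the dual complex $\mathcal{D}(B)$, and then invoke Lemma~\ref{lem:orientability} and Definition~\ref{defn:orient}. First I would fix a dlt pair $(X,B)$ of coregularity $0$ with $B=B^{=1}$ of dimension $n+1$, so that $\mathcal{D}(B)$ is an $n$-dimensional closed pseudo-manifold (coregularity $0$ forces $\dim\mathcal{D}(B)=n$, and the absence of log canonical centers of lower dimension on a dlt model forces $\partial\mathcal{D}(B)=\emptyset$ — this last point deserves a careful word, since $\partial\mathcal{D}(B)$ corresponds precisely to the codimension-one strata that are not contained in two components). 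Thus $H^n(\mathcal{D}(B),\zz)$ is either $\zz$ or $0$, and it is $\zz$ exactly when $\mathcal{D}(B)$ is orientable. The statement then reduces to the claim that $H^0(X,\mathcal{O}_X(K_X+B))\neq 0$ if and only if $K_X+B\sim 0$, combined with an isomorphism $H^0(X,\mathcal{O}_X(K_X+B))\cong H^n(\mathcal{D}(B),\kk)$.

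The key step is the construction of this isomorphism. The idea is that a section of $\mathcal{O}_X(K_X+B)$ is a rational top form on $X$ with at worst log poles along $B=\sum_i B_i$, and iterated Poincar\'e residue along a maximal chain $B_{i_0}\cap\cdots\cap B_{i_n}$ (a vertex of $\mathcal{D}(B)$, which by coregularity $0$ is a point) produces a scalar; compatibility of these residues as one moves between adjacent top-dimensional cells of $\mathcal{D}(B)$ — governed by the $\pm 1$ ambiguity of the residue, i.e. the choice of ordering of the $B_{i_j}$ — is precisely the condition for these scalars to glue into a simplicial $n$-cocycle on $\mathcal{D}(B)$. Concretely, I would proceed on a neighborhood of the zero-dimensional strata, where by \cite[\S 3.35]{Kol13} and toric-type local models the pair looks like a simple normal crossing divisor $\{x_0\cdots x_n=0\}$ (times a smooth factor), the form $\frac{dx_0}{x_0}\wedge\cdots\wedge\frac{dx_n}{x_n}$ trivializes $\mathcal{O}_X(K_X+B)$ locally and has residue $\pm 1$; reconciling these local trivializations over the $1$-skeleton (curves $\cong\pp^1$, where $\frac{dx}{x}$ has residues $1$ and $-1$ at its two poles) produces the cocycle, and one checks this assignment is an isomorphism onto $H^n(\mathcal{D}(B),\kk)$ by a \v{C}ech/simplicial computation — essentially that the sheaf of such log forms, pushed to the combinatorial data, computes the cohomology of the complex, using that $H^0(X,\mathcal{O}_X)=\kk$ and that strict log Calabi--Yau forces no further conditions. (Since the residue calculus and its compatibility will be developed in \S\ref{sec:dualcomplexcoreg0}, here I would cite those constructions rather than reproving them.)

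Granting the isomorphism $H^0(X,\mathcal{O}_X(K_X+B))\cong H^n(\mathcal{D}(B),\kk)$, the equivalence is immediate in one direction: if $K_X+B\sim 0$ then $\mathcal{O}_X(K_X+B)\cong\mathcal{O}_X$ has a nowhere-vanishing section, so the left side is $1$-dimensional, hence $H^n(\mathcal{D}(B),\kk)\cong\kk$, so $H^n(\mathcal{D}(B),\zz)=\zz$ (the integral group cannot be torsion by Lemma~\ref{lem:orientability}, which only allows $\zz$ or $0$), i.e. $\mathcal{D}(B)$ is orientable. Conversely, if $\mathcal{D}(B)$ is an orientable closed pseudo-manifold, then $H^n(\mathcal{D}(B),\zz)=\zz$, so $H^n(\mathcal{D}(B),\kk)\neq 0$, hence $H^0(X,\mathcal{O}_X(K_X+B))\neq 0$; since $K_X+B\sim_\qq 0$ is an integral $\qq$-Cartier divisor (it is Cartier as $B$ is reduced and $(X,B)$ dlt, hence $X$ is Cohen--Macaulay... more carefully, $K_X+B$ is Weil and numerically trivial, and $\qq$-Cartier), Lemma~\ref{lemma_Weil_torsion} applies and gives $K_X+B\sim 0$.

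The main obstacle I expect is the construction and well-definedness of the residue isomorphism: making precise how the sign ambiguity in iterated Poincar\'e residues is exactly the coboundary data of $\mathcal{D}(B)$, checking that the local trivializations by logarithmic forms are compatible over the full $1$-skeleton (not just near $0$-strata), and verifying the assignment is an isomorphism rather than merely an injection. This requires the local structure theory of dlt pairs of coregularity $0$ near their minimal strata and a careful bookkeeping of orientations of simplices versus orderings of divisor components — which is precisely the content deferred to \S\ref{sec:dualcomplexcoreg0}. Everything else (the pseudo-manifold and boundary statements, Lemma~\ref{lemma_Weil_torsion}) is already in place.
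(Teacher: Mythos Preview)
Your approach is genuinely different from the paper's, and while the underlying idea is sound, the execution has a structural gap.

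The paper does not construct a direct residue isomorphism $H^0(X,\mathcal{O}_X(K_X+B))\cong H^n(\mathcal{D}(B),\kk)$. Instead it passes through the cohomology of the boundary divisor $B$: from the short exact sequence $0\to\mathcal{O}_X(-B)\to\mathcal{O}_X\to\mathcal{O}_B\to 0$, Serre duality gives $H^{n+1}(X,\mathcal{O}_X(-B))\simeq H^0(X,\mathcal{O}_X(K_X+B))$, and the long exact sequence identifies this with $H^n(B,\mathcal{O}_B)$ once one knows $H^n(X,\mathcal{O}_X)=H^{n+1}(X,\mathcal{O}_X)=0$. This vanishing is where coregularity $0$ enters: by \cite[\S18]{KX16}, $X$ is rationally connected. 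The passage from $H^n(B,\mathcal{O}_B)$ to $H^n(\mathcal{D}(B),\kk)$ is then an already established comparison (\cite[Lemma~25]{KX16}, \cite[Proposition~A.7]{KLSV2018}), and the converse direction is cited from \cite[Claim~32.3]{KX16}. This route entirely sidesteps the residue bookkeeping you describe.

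Your plan, by contrast, defers the key construction to \S\ref{sec:dualcomplexcoreg0} --- but in the paper that section \emph{uses} Proposition~\ref{lem:pseudomflddualcomplex} (in the proofs of Proposition~\ref{cor:reducedCY} and Corollary~\ref{cor_orientability}), so your forward citation is circular. The residue viewpoint you outline does appear later, in \S\ref{sec:residue}, but only as an \emph{alternative} proof of Proposition~\ref{cor:reducedCY}; even there it does not establish the isomorphism $H^0(X,\mathcal{O}_X(K_X+B))\cong H^n(\mathcal{D}(B),\kk)$ directly, merely that the iterated residues to the zero-dimensional strata coincide up to sign and are equivariant in even powers. Remark~\ref{rem:comparison} indicates the two viewpoints are morally equivalent, but the paper never carries out the simplicial cocycle construction you sketch. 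To make your approach self-contained you would have to supply that argument in full --- a genuine task --- whereas the paper's route reduces everything to one short exact sequence, rational connectedness, and three citations.
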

\begin{proof}
Set $n+1=\dim X$.
The short exact sequence
\[0 \to \O X.(-B) \to \O X. \to \O B. \to 0\]
induces the long exact sequence in cohomology
\[
H^{n}(X, \O X.) \to H^{n}(B, \O B.) \to H^{n+1}(X, \O X.(-B)) \simeq H^{0}(X, \O X.(K_{X} + B)) \to H^{n+1}(X, \O X.).
\]
Since $(X, B)$ has coregularity 0, $X$ is rationally connected by \cite[\S18]{KX16}, so we have
\[H^{n}(X, \O X.)=H^{n+1}(X, \O X.)=0.\]
Therefore, $K_{X} + B \sim 0$ is equivalent to $H^{n}(B, \O B.) \neq 0$; see also Lemma \ref{lemma_Weil_torsion}. Observe that $H^{n}(\mathcal{D}(B), \kk) \simeq H^{n}(B, \O B.)$ by \cite[Lemma 25]{KX16} and its generalization to the dlt setting in \cite[Proposition A.7]{KLSV2018}. We conclude that the orientability of $\mathcal{D}(B)$ (i.e., $H^{n}(\mathcal{D}(B), \kk) \neq 0$) implies $H^{n}(B, \O B.) \neq 0$, or equivalently $K_{X} + B \sim 0$. The converse follows from \cite[Claim 32.3]{KX16}.
\end{proof}

\subsection{Dual complexes of log Calabi--Yau pairs in coregularity zero}\label{sec:dualcomplexcoreg0}

In this subsection, we prove the statements in \S\ref{sec:MainThm} and \S\ref{sec:orientability:intro} including the main theorem of the paper.

\begin{proof}[Proof of Proposition~\ref{cor:reducedCY}]
Let $m$ be the smallest integer such that $m(K_X+B) \sim 0$. 
Correspondingly, there exists a degree $m$ quasi-\'{e}tale cyclic cover 
\begin{equation}\label{eq:indexone}
    q \colon (\widetilde{X}, \widetilde{B}) \to (X, B)
\end{equation} with $K_{\widetilde{X}} + \widetilde{B} \sim 0$, alias the index one cover of the pair $(X,B)$.
The cover induces a quotient map of regular $\Delta$-complexes  \[\mathcal{D}(\widetilde{B}) \to \mathcal{D}(\widetilde{B})/(\zz/m\zz) \simeq \mathcal{D}(B);\]
see \cite[\S17]{KX16}. Set $\dim X = n+1$ and suppose $m \neq 1$.
By Proposition \ref{lem:pseudomflddualcomplex}, we have \[H^{n}(\mathcal{D}(\widetilde{B}), \zz) = \zz \text{, but }H^{n}(\mathcal{D}(B), \zz)=0.
\]
Note that the Galois group $\zz/m\zz$ of $q$ acts on $ H^{n}(\mathcal{D}(\widetilde{B}), \zz) = \zz$ and exchanges the generators $1$ and $-1$.
Otherwise, we would have
\[\qq = H^{n}(\mathcal{D}(\widetilde{B}), \qq)^{\zz/m\zz} = H^{n}(\mathcal{D}(B), \zz)\otimes \qq =0,
\]
where the second equality follows from \cite[Theorem III.2.4]{Bredon1972}.
Therefore, there exists a group homomorphism $\zz/m\zz \to\zz/2\zz \simeq \{1, -1\}$ that is non-trivial.
In particular, $m$ is even.

Now, let $(Y, B_Y)$ be the dlt pair corresponding to the quotient $\zz/m\zz \to\zz/2\zz$ as in \cite[Theorem 2.(5)]{KX16}. We obtain the sequence of quotient maps
\begin{align*}
    (\widetilde{X}, \widetilde{B}) & \xrightarrow{\frac{m}{2}:1} (Y, B_Y) \xrightarrow{2:1} (X, B)\\
    \mathcal{D}(\widetilde{B}) & \xrightarrow{\frac{m}{2}:1}     \mathcal{D}(B_Y) \xrightarrow{2:1}  \mathcal{D}(B).
\end{align*}
Taking cohomology and using \cite[Theorem III.2.4]{Bredon1972}, we obtain
\[H^{n}(\mathcal{D}(\widetilde{B}), \qq) = \qq \longleftarrow H^n(\mathcal{D}(B_Y), \qq) = H^{n}(\mathcal{D}(\widetilde{B}), \qq)^{\zz/\frac{m}{2}\zz} = \qq \longleftarrow H^{n}(\mathcal{D}(B), \qq) 
= 0, \]
i.e., $K_{Y}+B_Y \sim 0$ by Proposition \ref{lem:pseudomflddualcomplex} and so $m=2$.
The norm of the section trivializing $K_{Y}+B_Y$ descends to a section of $2(K_{X}+B)$.
\end{proof}

\begin{proof}[{Proof of Corollary \ref{cor_orientability}}]
This is immediate from Proposition \ref{lem:pseudomflddualcomplex} and the proof of Proposition \ref{dlt:reduced}.
\end{proof}

\begin{proof}[Proof of Theorem \ref{introthm:main-thm}]
We proceed by induction on the dimension of $X$.
The base case is $\dim X =1$, in which case $X = \pr 1.$ and the statement is clear.
Thus, in the rest of the proof, we will address the inductive step, and we will assume that Theorem \ref{introthm:main-thm} holds in lower dimension.

By Corollary \ref{lem:reddlt}, we may assume that $(X,B,\bM.)$ is generalized dlt.
If $B \sups <1. + \bM X. \nequiv 0$, then the statement follows from Proposition \ref{fractional:part}.
Thus, we may assume that $B \sups <1. + \bM X. \equiv 0$.
Then, by Corollary \ref{M:torsion}, we may assume that $\bM . = 0$ as b-divisor.
But then, we have that $(X,B,\bM.)=(X,B)$ is a dlt pair with $B=B \sups =1.$, and we conclude by Proposition \ref{dlt:reduced}.
\end{proof}

\begin{proof}[Proof of Corollary~\ref{cor:frac1/2}]
Let $\Lambda$ be the set of coefficients of $B$.
By assumption, we have that $\Lambda \subset \left[ \frac{1}{2},1\right]$.
We first show that $2B$ is integral. Then, by Theorem~\ref{introthm:main-thm}, we obtain that $2(K_X+B)\sim 0$.

Since $\Lambda \subset \left[\frac{1}{2},1\right]$, the set
$D_\Lambda$ (see Notation~\ref{not:1}) is contained in $\{0\} \cup \left[\frac{1}{2},1\right]$. The proof is elementary: an element of $D_\Lambda$ is of the form
\begin{equation}
\label{eq:form_of_coeff}
1-\frac{1}{m} + \frac{\sum_{i=1}^k m_i\lambda_i}{m} = \begin{cases} \sum_{i=1}^k m_i\lambda_i \geq \frac{1}{2} \quad \text{ or }=0\qquad & \text{ if } m=1,\vspace{0.1 cm}\\ 
\geq 1-\frac{1}{m} \geq \frac{1}{2} & \text{ if } m>1,
\end{cases}
\end{equation}
where $m,m_i, k \in \zz_{>0}$ and $\lambda_i \in \Lambda \cup \{0, 1\}$.

Let $r\in \left[\frac{1}{2},1\right]$
be a coefficient of $B$. By Lemma~\ref{lem:producing-lcy-p1},
there exists a log Calabi--Yau pair $(\pp^1,B_{\pp^1})$ satisfying the following conditions:
\begin{itemize}
    \item the coefficients of $B_{\pp^1}$ belong to $D_\Lambda$;
    \item there is $q\in \pp^1$ such that
    ${\rm coeff}_q(B_{\pp^1})=1$; and 
    \item there is $p\in \pp^1$, with $p\neq q$, for which 
    ${\rm coeff}_p(B_{\pp^1})\in D_\Lambda(r)$.
\end{itemize}
By~\eqref{eq:form_of_coeff}, we know that ${\rm coeff}_x(B_{\pp^1})\geq \frac{1}{2}$ for any $x \in \Supp(B_{\pr 1.})$.
Then, we deduce that 
\begin{equation} 
\label{eq:deg-bp}
2=
{\rm deg}(B_{\pp^1}) = 
{\rm coeff}_p(B_{\pp^1}) +
{\rm coeff}_q(B_{\pp^1}) + 
\sum_{s\not\in \{p,q\}}{\rm coeff}_s(B_{\pp^1}) \geq \frac{3}{2} +
\sum_{s\not\in \{p,q\}}{\rm coeff}_s(B_{\pp^1}).
\end{equation} 

First, assume that ${\rm coeff}_s(B_{\pp^1})>0$ for some 
$s\not\in\{p,q\}$.
Due to~\eqref{eq:form_of_coeff} and ~\eqref{eq:deg-bp}, 
there must be a single such $s$, and it appears in $B_{\pp^1}$ with coefficient $\frac{1}{2}$, so that
\[
 {\rm coeff}_p(B_{\pp^1})=
1-\frac{1}{m_p} + \frac{m_0r+\sum_{i=1}^km_i\lambda_i}{m_p}=\frac{1}{2}.
\]
The equality implies $m_p=k=m_1=m_0=1$, $\lambda_1=0$ and $r = \frac{1}{2}$. 

Now, assume that $B_{\pp^1}$ is supported
only on $p$ and $q$.
By~\eqref{eq:deg-bp}, we write 
\[
{\rm coeff}_p(B_{\pp^1})=
1-\frac{1}{m_p} + \frac{m_0r+\sum_{i=1}^km_i\lambda_i}{m_p}=1.
\]
So we have that $m_p=k=m_1=m_0=1$, and either $\lambda_1=\frac{1}{2}$ and $ r=\frac{1}{2}$ or $\lambda_1=0$ and $ r=1$. In all cases, we conclude that 
$r\in \left\{\frac{1}{2},1\right\}$.
This means that $2B$ is an integral divisor.

Finally, if $\Lambda \subset \left( \frac{1}{2},1\right]$, i.e., $r \neq \frac{1}{2}$, the previous argument forces $r=1$, i.e., $B$ is reduced. 
\end{proof}

\subsection{Index and residues}\label{sec:residue}
In this subsection, we present an alternative proof of Proposition~\ref{cor:reducedCY} that does not use the language of dual complexes.
This alternative approach was kindly suggested to us by J\'anos Koll\'{a}r. 

Let $(X,B)$ be a proper dlt Calabi--Yau pair of coregularity $0$ and $\{ Z_i\}_{i}$ be the set of its 0-dimensional log canonical centers.
Let $r$ be a positive integer such that $r(K_{X} + B)$ is Cartier.
In particular, we have $\mathcal{O}_X(r(K_X+B))\simeq \omega_{X}^{[r]}(rB) \simeq (\omega_{X}(B))^{\otimes r}$.
Then, the Poincar\'{e} residue maps (see \cite[4.18]{Kol13})
\[
\mathcal{R}^{r}_{X \to Z_i} \colon (\omega_{X}(B))^{\otimes r} \to 
\omega_{Z_i}^{\otimes r}
\]
induce a $\kk$-linear residue map in cohomology
\begin{equation}\label{eq:residue}
\mathcal{R}^{r} \coloneqq  \big( \ldots,  \mathcal{R}^{r}_{X \to Z_i, *}, \ldots\big) \colon  H^0(X, \mathcal{O}_{X}(r( K_{X}+B)))
\to \bigoplus_{i} H^0(Z_i, \omega_{Z_i}^{\otimes r})\simeq \bigoplus_{i} \kk.
\end{equation}
The maps are defined up to sign for $r$ odd, and they are unique for $r$ even; see again \cite[4.18]{Kol13}. 

\begin{lemma}\label{lem:residueequivariant}
Let $(X,B)$ be a proper dlt pair of coregularity $0$, and $G$ be a subgroup of $\Aut(X,B)$, i.e., a subgroup of the group of automorphisms $g \colon X \to X$ such that $g^{*}(B)=B$.
For any even integer $r$, the map $\mathcal{R}^{r}$ is $G$-equivariant with respect to the natural action induced on its domain and codomain. 
\end{lemma}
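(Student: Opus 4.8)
The plan is to reduce the $G$-equivariance of $\mathcal{R}^r$ to the fact that the Poincar\'e residue maps $\mathcal{R}^r_{X\to Z_i}$ themselves are compatible with the $G$-action on $X$. First I would unwind the action of $G$ on the two sides of \eqref{eq:residue}: since $g^*B=B$ for $g\in G$, each $g$ is an automorphism of the pair $(X,B)$, hence induces a pullback isomorphism $g^*\colon \omega_X(B)\xrightarrow{\sim}\omega_X(B)$, and therefore $g^*\colon (\omega_X(B))^{\otimes r}\xrightarrow{\sim}(\omega_X(B))^{\otimes r}$, which on global sections is the natural $G$-action on $H^0(X,\mathcal{O}_X(r(K_X+B)))$. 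On the target side, $G$ permutes the zero-dimensional log canonical centers $\{Z_i\}_i$ (since the locus of such centers is canonically determined by $(X,B)$ and hence $G$-invariant as a set), and for $g\in G$ with $g(Z_i)=Z_j$, the pullback gives an isomorphism $g^*\colon \omega_{Z_j}^{\otimes r}\xrightarrow{\sim}\omega_{Z_i}^{\otimes r}$; on $H^0$ this is the permutation action on $\bigoplus_i\kk$ induced by the permutation of indices, combined with the multiplication-by-$\pm1$ that a priori could appear when identifying $H^0(Z_j,\omega_{Z_j}^{\otimes r})$ with $\kk$.

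Next I would carry out the key step: checking the commutativity of the square
\[
\begin{CD}
(\omega_X(B))^{\otimes r} @>{\mathcal{R}^r_{X\to Z_i}}>> \omega_{Z_i}^{\otimes r}\\
@A{g^*}AA @AA{g^*}A\\
(\omega_X(B))^{\otimes r} @>{\mathcal{R}^r_{X\to Z_j}}>> \omega_{Z_j}^{\otimes r}
\end{CD}
\]
for $g\in G$ with $g(Z_j)=Z_i$. This is where I would invoke the naturality of the Poincar\'e residue. Working \'etale-locally near a general point of $Z_i$, the pair looks like a normal crossing divisor $B^{=1}$ cutting out $Z_i$ as a component of a maximal intersection, and the iterated residue $\mathcal{R}^r_{X\to Z_i}$ is given in local coordinates by the usual formula $(dx_1/x_1\wedge\cdots\wedge dx_{n+1}/x_{n+1}\otimes\cdots)\mapsto(\text{restriction to }x_1=\cdots=x_{n+1}=0)$; since $g$ maps the coordinates defining $Z_j$ to those defining $Z_i$ preserving $B^{=1}$, the residue commutes with $g^*$ on the nose up to the sign ambiguity inherent in ordering the local branches through the point. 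Here is the crucial point where the hypothesis that $r$ is \emph{even} enters: by \cite[4.18]{Kol13}, for $r$ even the residue map $\mathcal{R}^r_{X\to Z_i}$ is \emph{canonical} --- independent of the choice of ordering of the branches of $B^{=1}$ through $Z_i$, because reordering changes the residue by a sign and the sign gets squared away when $r$ is even. Thus for $r$ even the square above commutes without any sign ambiguity, and passing to global sections gives $\mathcal{R}^r\circ g^* = g^*\circ\mathcal{R}^r$, which is precisely $G$-equivariance.

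Finally I would assemble these: summing the commutative squares over $i$ (equivalently, over the $G$-orbits of the $Z_i$) yields that the map $\mathcal{R}^r$ of \eqref{eq:residue} intertwines the $G$-action on $H^0(X,\mathcal{O}_X(r(K_X+B)))$ with the (permutation) $G$-action on $\bigoplus_i H^0(Z_i,\omega_{Z_i}^{\otimes r})\simeq\bigoplus_i\kk$. The main obstacle — really the only subtlety — is the sign indeterminacy of $\mathcal{R}^r_{X\to Z_i}$ for odd $r$; the whole point of restricting to even $r$ is to kill it, so I would be careful to state explicitly that for even $r$ the cited result of Koll\'ar makes the local residue formula canonical, and hence the diagram chase is clean. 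Everything else is formal functoriality of pullback on dualizing sheaves of normal varieties together with the $G$-invariance of the set of zero-dimensional log canonical centers (which itself follows since log canonical centers are intrinsic to the pair).
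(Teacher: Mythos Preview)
Your proposal is correct and follows essentially the same approach as the paper: both arguments work locally near a zero-dimensional log canonical center, use the explicit coordinate form $a\prod_j(dx_j/x_j)^{\otimes r}$ of a section and of the iterated Poincar\'e residue, and observe that the only obstruction to equivariance is the sign ambiguity in $\mathcal{R}^r_{X\to Z_i}$, which disappears for even $r$. The paper carries out the computation of $g^*(\mathcal{R}^r_{X\to Z_i,*}(s))=\mathcal{R}^r_{X\to g^{-1}(Z_i),*}(g^*s)$ by hand in coordinates, whereas you phrase the same step as commutativity of a square and cite the canonicity from \cite[4.18]{Kol13}; these are the same content.
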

\begin{proof}
Locally around a 0-dimensional log canonical center $Z_i$, a section $s$ of $H^0(X, \mathcal{O}_{X}(r( K_{X}+B)))$ is given by 
\[
a \prod_{j} \bigg(\frac{dx_j}{x_j}\bigg)^{\otimes r},
\]
where $a \in \mathcal{O}_{{X}, Z_i}$ and $\prod_j x_j$ is a local equation for ${B}$ at $Z_{i}$.
For any $g \in G$, the function $y_j \coloneqq x_i \circ g$ is a local equation for an irreducible component of $B$ passing through $g^{-1}(Z_i)$.
We obtain
\begin{equation}\label{eq:evenequivari}
\begin{split}
    g^*(\mathcal{R}^{r}_{X \to Z_i, *}(s))&=a(Z_i)\\ &=a \circ g(g^{-1}(Z_i))\\ &= \mathcal{R}^{r}_{X \to g^{-1}(Z_i), *}\left(a \circ g \prod_{j} \bigg(\frac{dy_j}{y_j}\bigg)^{\otimes r}\right)\\ &= \mathcal{R}^{r}_{X \to g^{-1}(Z_i), *}(g^*s),
\end{split}
\end{equation}
i.e., $\mathcal{R}^{r}$ is $G$-equivariant.\footnote{Note that if $r$ is odd, the equality \eqref{eq:evenequivari} holds in general only up to sign.}
\end{proof}

\begin{lemma}\label{computationresidue}
For any 0-dimensional log canonical centers $Z_1$ and $Z_2$ of the proper dlt Calabi--Yau pair $(X, B)$, we have $\mathcal{R}^{r}_{X \to Z_1, *} = \pm \mathcal{R}^{r}_{X \to Z_2, *}$.
In particular, if $r$ is even, we have $\mathcal{R}^{r}_{X \to Z_1, *} =  \mathcal{R}^{r}_{X \to Z_2, *}$.
\end{lemma}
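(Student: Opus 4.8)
\textbf{Proof proposal for Lemma~\ref{computationresidue}.}
The plan is to connect any two $0$-dimensional log canonical centers $Z_1$ and $Z_2$ by a chain of codimension-one log canonical centers, and to show that the residue maps are compatible (up to sign) when passing between consecutive centers in the chain; the sign ambiguity collapses once $r$ is even, exactly as in Lemma~\ref{lem:residueequivariant}. Concretely, since $(X,B)$ is dlt of coregularity $0$, the dual complex $\mathcal{D}(B)$ is a closed pseudo-manifold of dimension $n$ where $n+1=\dim X$, and its $n$-faces correspond to the $0$-dimensional log canonical centers $Z_i$. By the strong connectedness in Definition~\ref{defn:pseudo}, there is a sequence of $n$-simplices $\sigma_{Z_1}=\sigma_0,\sigma_1,\dots,\sigma_l=\sigma_{Z_2}$ such that $\sigma_j\cap\sigma_{j+1}$ is an $(n-1)$-simplex, i.e., the corresponding centers $Z^{(j)}$ and $Z^{(j+1)}$ both lie on a common $1$-dimensional log canonical center $W_j$, which by dlt-ness is a smooth rational curve, necessarily $\pp^1$ (it is a log Calabi--Yau pair of coregularity $0$ in dimension $1$ with two reduced boundary points, namely $Z^{(j)}$ and $Z^{(j+1)}$).

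The key local computation is then the case $\dim X=1$: on $(\pp^1, \{0\}+\{\infty\})$, a generator of $H^0(\pp^1,\mathcal{O}(r(K_{\pp^1}+B)))$ is $(dx/x)^{\otimes r}$, and its residues at $0$ and $\infty$ differ by $(-1)^r$, since $dx/x=-d(1/x)/(1/x)$. This is precisely the statement made in the introduction (\S\ref{sec:residue}): the rational form $dx/x$ on $\pp^1$ has residues $1$ and $-1$ at its two poles. For the general step one restricts a section $s$ to a neighborhood of $W_j\cong\pp^1$; iterating adjunction (Poincar\'e residue) along the $n-1$ reduced boundary components cutting out $W_j$ reduces $\mathcal{R}^r_{X\to Z^{(j)},*}(s)$ and $\mathcal{R}^r_{X\to Z^{(j+1)},*}(s)$ to the residues at the two points $0,\infty$ of the induced section on $W_j$, which therefore agree up to the sign $(-1)^r$. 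Composing the chain, $\mathcal{R}^r_{X\to Z_1,*}=\pm\,\mathcal{R}^r_{X\to Z_2,*}$, with the sign a product of $\pm1$'s; and when $r$ is even every sign is $+1$, giving $\mathcal{R}^r_{X\to Z_1,*}=\mathcal{R}^r_{X\to Z_2,*}$.

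The main obstacle I anticipate is the bookkeeping in the intermediate-dimensional reduction: one must check that iterated Poincar\'e residues are functorial, so that residuing down to a $1$-dimensional center $W_j$ and then to its two endpoints really computes the same linear functional as residuing directly to the $0$-dimensional centers, and that the only sign discrepancy introduced is the $(-1)^r$ coming from the $\pp^1$ computation (the other choices of ordering of boundary components introduce signs that cancel between the two endpoints because they traverse the \emph{same} components of $B$ near $W_j$). This is a standard but slightly delicate compatibility of residue maps; invoking the explicit local normal form $s=a\prod_j(dx_j/x_j)^{\otimes r}$ as in the proof of Lemma~\ref{lem:residueequivariant} makes it transparent, since the residue just reads off $a$ evaluated at the relevant center and the only ambiguity is the ordering-induced sign of $\prod_j(dx_j/x_j)^{\otimes r}$, which is $\pm1$ and becomes trivial for $r$ even.
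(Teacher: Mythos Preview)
Your proposal is correct and follows essentially the same route as the paper: reduce to the case where $Z_1$ and $Z_2$ lie on a common $1$-dimensional log canonical center $W\simeq\pp^1$ by a chain argument, factor the residue map through $W$, and compute on $\pp^1$ that $(dx/x)^{\otimes r}$ has residues $1$ and $(-1)^r$ at its two poles. The only noteworthy difference is that you justify the chain via strong connectedness of the pseudo-manifold $\mathcal{D}(B)$, whereas the paper invokes $\pp^1$-linking of minimal log canonical centers \cite[Theorem 4.40]{Kol13}; since \S\ref{sec:residue} is presented as an alternative argument avoiding the language of dual complexes, the paper's choice is deliberate, but the two are equivalent here. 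Your discussion of the sign bookkeeping is also slightly more explicit than the paper's, which simply writes the factorization $\mathcal{R}^r_{X\to Z_i}=\mathcal{R}^r_{W\to Z_i}\circ\mathcal{R}^r_{X\to W}$ and observes that the first factor is common to both endpoints.
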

\begin{proof}[Proof]
Without loss of generality, we can assume that $Z_1$ and $Z_2$ belong to the same 1-dimensional log canonical center $W \simeq \pp^1$.
Indeed, any two minimal log canonical centers of a log Calabi--Yau pair are $\pp^1$-linked; see \cite[Theorem 4.40]{Kol13}.
In our setup, this means that there exists a sequence of 0-dimensional log canonical centers $Z'_1, \ldots, Z'_s$ with $Z'_1=Z_1$ and $Z'_s = Z_2$ such that $Z'_{k}$ and $Z'_{k+1}$ are contained in the same 1-dimensional log canonical center, which is necessarily isomorphic to $\pp^1$.
Now, the Poincar\'{e} residue map $\mathcal{R}^{r}_{{X} \to Z_i}$ factors as
\[
(\omega_{ {X}}( {B}))^{\otimes r} \xrightarrow{\mathcal{R}^{r}_{ {X} \to W}} (\omega_{W}(Z_1+Z_2))^{\otimes r} \xrightarrow{\mathcal{R}^{r}_{W \to Z_i}} \omega_{Z_i}^{\otimes r},
\]
where $i=1,2$.
A generator of $H^0(\pp^1, \mathcal{O}_{\pp^1}(r (K_{\pp^1} + Z_1+Z_2)))$ is $(dx/x)^{\otimes r}$, which has residue $1$ at $Z_1$ and $(-1)^r$ at $Z_2$.
\end{proof}

\begin{proof}[Alternative proof of Proposition~\ref{cor:reducedCY}]
Let $q \colon (\widetilde{X}, \widetilde{B}) \to (X, B)$ be the index one cover defined in \eqref{eq:indexone}.
Up to a dlt modification, we can suppose that $(X, B)$ is dlt, and so $(\widetilde{X}, \widetilde{B})$ is too.
Let $\{ Z_i\}_{i}$ be the 0-dimensional log canonical centers of $(\widetilde{X}, \widetilde{B})$. Since the Galois group $\zz/m\zz$ of the cover $q$ is a subgroup of $\Aut(\widetilde{X}, \widetilde{B})$, the map 
\[
\mathcal{R}^{2} = \big( \ldots,  \mathcal{R}^{2}_{\widetilde{X} \to Z_i, *}, \ldots\big) \colon  H^0(\widetilde{X}, \mathcal{O}_{\widetilde{X}}(2( K_{\widetilde{X}}+\widetilde{B})))
\to \bigoplus_{i} H^0(Z_i, \omega_{Z_i}^{\otimes r})\simeq \bigoplus_{i} \kk
\]
is $\zz/m\zz$-equivariant by Lemma~\ref{lem:residueequivariant}. The image of $\mathcal{R}^{2}$ is the line spanned by the vector $(1, \ldots, 1) \in \bigoplus_{i} \kk$ by Lemma~\ref{computationresidue}, and it is fixed by $\zz/m\zz$, since the Galois group acts on $\bigoplus_{i} \kk$ by permuting the copies of $\kk$.

Since $(\widetilde{X}, \widetilde{B})$ is log Calabi--Yau, $\mathcal{R}^{2}$ is injective.
Therefore, we obtain that $H^0(\widetilde{X}, \mathcal{O}_{\widetilde{X}}(r( K_{\widetilde{X}}+\widetilde{B})))$ is invariant under the action of $\zz/m\zz$.
Then, a trivializing section of $2( K_{\widetilde{X}}+\widetilde{B})$ descends to a section of $2(K_{X}+B)$.
In particular, this implies that $m=2$.
\end{proof}

\begin{remark}\label{rem:comparison}
The two proposed proofs of Proposition~\ref{cor:reducedCY} are essentially equivalent.
One should regard the residue map \eqref{eq:residue} as the restriction of a global orientation of the dual complex $\mathcal{D}(\widetilde{B})$ to a local orientation at a point of a maximal cell $\sigma$ in $\mathcal{D}(\widetilde{B})$
\[
H^{n}(\mathcal{D}(\widetilde{B}), \zz) \to H^{n}(\mathcal{D}(\widetilde{B}),\mathcal{D}(\widetilde{B}) \setminus \sigma,  \zz)\simeq H^{n-1}(\partial \sigma, \zz) \simeq \zz;
\]
see \cite[\S 3.3, p.\ 234]{H02}.
\end{remark}

\section{Index of semi-log canonical Calabi--Yau pairs}
In this section, we prove a version of our main theorem for semi-log canonical pairs (Corollary \ref{introthm:slc}).
This is a key ingredient for the proof of Theorem~\ref{introcor-GS}; see \S\ref{sec:applicationmirrorsymmetry}. 
For the concept of semi-dlt and semi-log canonical pair,
we refer the readers to \cite[\S5]{Kol13}.
We will use the language of
admissible and pre-admissible sections.

\begin{definition}\label{def:admissible}
Let $(X,B)$ 
be a projective semi-dlt pair, possibly disconnected, of dimension $n$.
Assume that $m(K_X+B)$ is Cartier.
Let 
$(X^\nu,B^\nu)$
be the normalization of $(X,B)$.
Let $D^\nu \subset X^\nu$ be 
the normalization of
the double locus $D\subset X$.
We write
$(D^\nu,B_{D^\nu})$ 
for the dlt pair obtained by the adjunction of $(X^\nu,B^\nu)$ to $D^\nu$.
We write 
$X_i^\nu$ for the connected components of $X^\nu$
and $(X^\nu_i,B^\nu_i)$ for the pairs obtained by restriction of $(X^\nu,B^\nu)$ to $X^\nu_i$.
We define 
{\em pre-admissible}
and 
{\em admissible}
sections in $H^0(X,\mathcal{O}_X(m(K_X+B)))$
inductively using the following rules:
\begin{enumerate}
    \item a section
    \[
    s\in 
    H^0(X,\mathcal{O}_X(m(K_X+B))
    \] 
    is {\em pre-admissible} if its restriction
    $s|_{D^\nu} 
    \in H^0(D^\nu,\mathcal{O}_{D^\nu}(m(K_{D^\nu}+B_{D^\nu})))$ is admissible.
    The set of pre-admissible sections is denoted by
    $PA(X,m(K_X+B))$;
    \item a section
    \[
    s\in 
    H^0(X,\mathcal{O}_X(m(K_X+B))
    \]
    is {\em admissible} if 
    it is pre-admissible
    and 
    for every crepant birational map
    $g\colon (X^\nu_i,B^\nu_i)\dashrightarrow (X^\nu_j,B^\nu_j)$, we have that
    $g^*(s|_{X^\nu_i})=s|_{X^\nu_j}$.
    The set of admissible sections
    is denoted by
    $A(X,m(K_X+B))$.
\end{enumerate}
\end{definition}

In what follows, we recall several lemmas regarding pre-admissible and admissible sections.
The following lemma is clear by definition; see ~\cite[Remark 5.2]{Gon13}.

\begin{lemma}\label{lem:pull-back-admissible}
Let $(X,B)$ be a projective semi-dlt pair.
Let 
$\pi\colon (X^\nu,B^\nu)\rightarrow (X,B)$
be its normalization.
Assume $m(K_X+B)\sim 0$.
A section
$s\in H^0(X,\mathcal{O}_X(m(K_X+B)))$
is pre-admissible 
(resp. admissible)
if and only if
$\pi^*s\in H^0(X^\nu,\mathcal{O}_{X^\nu}(m(K_{X^\nu}+B^\nu)))$ is pre-admissible
(resp. admissible).
\end{lemma}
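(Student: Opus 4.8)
The plan is to establish the equivalence by tracing a section through the normalization and showing that the combinatorial constraints in the definition of pre-admissibility and admissibility are compatible with pull-back by $\pi$. First I would recall that, since $m(K_X+B)\sim 0$, the sheaf $\mathcal{O}_X(m(K_X+B))$ is an invertible sheaf, and pulling back along the finite morphism $\pi\colon X^\nu\to X$ gives $\pi^*\mathcal{O}_X(m(K_X+B))\simeq \mathcal{O}_{X^\nu}(m(K_{X^\nu}+B^\nu))$ because $(X,B)$ is semi-dlt and hence the adjunction formula $K_{X^\nu}+B^\nu=\pi^*(K_X+B)$ holds by \cite[\S5]{Kol13}. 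Thus there is a natural injection $\pi^*\colon H^0(X,\mathcal{O}_X(m(K_X+B)))\hookrightarrow H^0(X^\nu,\mathcal{O}_{X^\nu}(m(K_{X^\nu}+B^\nu)))$, and a section $s$ and its pull-back $\pi^*s$ literally determine one another. The content of the lemma is therefore purely about how the defining conditions interact with this identification.

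Next I would observe that the normalization of the (possibly disconnected) semi-dlt pair $(X,B)$ is the same as the normalization of its own normalization: that is, $(X^\nu)^\nu=X^\nu$, the double locus of $X^\nu$ is empty except along the loci that glue to form $X$, and more precisely the normalization-with-adjunction data $(D^\nu,B_{D^\nu})$ attached to $(X,B)$ is by construction the data attached to $(X^\nu,B^\nu)$. Hence the restriction $s|_{D^\nu}$ appearing in the definition of pre-admissibility for $(X,B)$ is identical to the restriction $(\pi^*s)|_{D^\nu}$ appearing in the definition of pre-admissibility for $(X^\nu,B^\nu)$; this gives the pre-admissible case immediately. For the admissible case, the extra condition concerns crepant birational maps $g\colon(X_i^\nu,B_i^\nu)\dashrightarrow(X_j^\nu,B_j^\nu)$ between connected components of the normalization, together with $g^*(s|_{X_i^\nu})=s|_{X_j^\nu}$. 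But the connected components $X_i^\nu$ of $X^\nu$ are the same whether we regard $X^\nu$ as the normalization of $X$ or of $X^\nu$ itself, and the restrictions $s|_{X_i^\nu}$ agree with $(\pi^*s)|_{X_i^\nu}$ under the identification above; so this condition, too, is unchanged. Combining the two, $s$ is admissible for $(X,B)$ precisely when $\pi^*s$ is admissible for $(X^\nu,B^\nu)$.

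The only subtlety — and the step I would be most careful about — is the well-definedness of the induction: pre-admissibility and admissibility are defined by descending induction on dimension, so one must check that the inductive hypothesis is being applied to the \emph{same} lower-dimensional pair in both cases. Since $(D^\nu,B_{D^\nu})$ is genuinely the same dlt pair on both sides (it is already normal, so its own normalization equals itself, and the lemma at that lower dimension becomes the trivial statement $\mathrm{id}^*s=s$), there is no circularity, and the induction bottoms out correctly. I would then simply remark, following \cite[Remark 5.2]{Gon13}, that all of this is immediate from unwinding Definition~\ref{def:admissible}, which is why the statement is labeled ``clear by definition.''
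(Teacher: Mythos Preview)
Your proposal is correct and matches the paper's approach: the paper gives no proof beyond declaring the lemma ``clear by definition'' with a reference to \cite[Remark 5.2]{Gon13}, and your write-up is precisely the unwinding of Definition~\ref{def:admissible} that justifies this. The key observation you make---that normalization is idempotent, so the auxiliary data $(X^\nu_i,B^\nu_i)$ and $(D^\nu,B_{D^\nu})$ entering the recursive definition are literally the same whether one starts from $(X,B)$ or from $(X^\nu,B^\nu)$---is exactly the content of the lemma.
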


Pre-admissible
sections of normalizations are designed to descend to non-normal varieties, as explained in the following statement; see, e.g.,~\cite[Lemma 4.2]{Fuj00}. 

\begin{lemma}\label{lem:descending-nonormal}
Let $(X,B)$ be a projective semi-log canonical pair
for which $m(K_X+B)$ is Weil.
Let $(X^\nu,B^\nu)\rightarrow (X,B)$ be its normalization
and $(Y,B_Y)$ a dlt modification
of $(X^\nu,B^\nu)$.
Assume $m(K_Y+B_Y)$ is Cartier.
Then, 
$s\in PA(Y,m(K_Y+B_Y))$
descends to
$H^0(X,\mathcal{O}_X(m(K_X+B)))$.
\end{lemma}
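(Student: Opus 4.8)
The plan is to show that a pre-admissible section $s \in PA(Y, m(K_Y + B_Y))$ on a dlt modification $(Y, B_Y)$ of the normalization $(X^\nu, B^\nu)$ first descends to a section on $(X^\nu, B^\nu)$, and then descends further along the normalization $\pi \colon (X^\nu, B^\nu) \to (X, B)$ to give a section of $\mathcal{O}_X(m(K_X+B))$. The first descent, from $Y$ down to $X^\nu$, is essentially a pushforward: since $(Y, B_Y) \to (X^\nu, B^\nu)$ is crepant birational and both are normal, pushforward along this birational morphism identifies $H^0(Y, \mathcal{O}_Y(m(K_Y+B_Y)))$ with $H^0(X^\nu, \mathcal{O}_{X^\nu}(m(K_{X^\nu}+B^\nu)))$ over a big open set, and then the $S_2$ property of the reflexive sheaf on the normal variety $X^\nu$ extends the section over all of $X^\nu$. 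By Lemma~\ref{lem:pull-back-admissible}, pre-admissibility is preserved under this identification (read in the appropriate direction), so we obtain a pre-admissible section $s^\nu \in PA(X^\nu, m(K_{X^\nu}+B^\nu))$.

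The second and substantive step is to show that a pre-admissible section on the normalization of a semi-log canonical pair descends to the non-normal pair itself. This is the content of the gluing construction for semi-log canonical pairs (cf.~\cite[\S5]{Kol13}): the sheaf $\mathcal{O}_X(m(K_X+B))$ on the non-normal variety $X$ is recovered from $\mathcal{O}_{X^\nu}(m(K_{X^\nu}+B^\nu))$ together with the gluing datum along the conductor. Concretely, writing $D \subset X$ for the double locus, $D^\nu \subset X^\nu$ for its preimage (the conductor divisor), and $\tau \colon D^\nu \to D^\nu$ for the involution exchanging the two preimages of a general point of $D$, a section $s^\nu$ on $X^\nu$ descends to $X$ precisely when $s^\nu|_{D^\nu}$ is $\tau$-invariant, i.e.\ $\tau^*(s^\nu|_{D^\nu}) = s^\nu|_{D^\nu}$ as sections of $\mathcal{O}_{D^\nu}(m(K_{D^\nu}+B_{D^\nu}))$. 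By definition of pre-admissibility, $s^\nu|_{D^\nu}$ is an \emph{admissible} section on $(D^\nu, B_{D^\nu})$, and admissibility requires exactly compatibility with all crepant birational self-maps of the components of $D^\nu$ — in particular with $\tau$, which is a crepant birational (indeed biregular) involution. Hence $s^\nu|_{D^\nu}$ is $\tau$-invariant and descends.

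I would carry this out as follows: first recall the gluing description of $\mathcal{O}_X(m(K_X+B))$ as the equalizer of the two restriction maps $\pi_*\mathcal{O}_{X^\nu}(m(K_{X^\nu}+B^\nu)) \rightrightarrows (\text{pushforward of }\mathcal{O}_{D^\nu}(m(K_{D^\nu}+B_{D^\nu})))$, one via the identity and one via $\tau$; this is where $m(K_X+B)$ being Weil is used, so that the reflexive hull behaves well. Then observe that $\tau$ induces a crepant birational self-map on the relevant component(s) of the normalization $(D^\nu, B_{D^\nu})$, so admissibility of $s^\nu|_{D^\nu}$ forces $\tau$-invariance. Finally conclude that $s^\nu$ lies in the equalizer, hence descends to the desired element of $H^0(X, \mathcal{O}_X(m(K_X+B)))$. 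The main obstacle — or rather the point requiring care — is tracking that the involution $\tau$ on the conductor is genuinely among the crepant birational maps accounted for in the definition of admissibility, and that the adjunction-compatibility $K_{D^\nu} + B_{D^\nu} = (K_{X^\nu}+B^\nu)|_{D^\nu}$ makes the restriction of $s^\nu$ land in the correct linear system; both are standard in the semi-log canonical gluing theory but should be stated precisely. One should also note that the hypothesis $m(K_Y+B_Y)$ Cartier guarantees the section $s$ makes sense as stated and that $m(K_X+B)$ being Weil is exactly what is needed for the reflexive descent.
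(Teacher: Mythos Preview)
The paper does not give its own proof of this lemma; it simply records the statement and cites \cite[Lemma~4.2]{Fuj00}. Your sketch is the standard argument underlying that reference: push the section down along the crepant birational morphism $Y\to X^\nu$, then use Koll\'ar's gluing description of $\mathcal{O}_X(m(K_X+B))$ along the conductor, where the required $\tau$-invariance of $s^\nu|_{D^\nu}$ is exactly what admissibility of the restriction (coming from pre-admissibility of $s$) guarantees. So your approach is correct and is, in substance, the one the paper defers to.

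One small point of care: your appeal to Lemma~\ref{lem:pull-back-admissible} for the step $Y\to X^\nu$ is not quite on target, since that lemma concerns the normalization map $X^\nu\to X$, not a dlt modification, and moreover $(X^\nu,B^\nu)$ is only log canonical, so $PA(X^\nu,\,\cdot\,)$ is not literally defined by Definition~\ref{def:admissible}. This is not a genuine gap: you do not actually need pre-admissibility on $X^\nu$. What you need is that the restriction of $s^\nu$ to the (normalized) conductor $D^\nu$ is $\tau$-invariant, and this follows directly by computing that restriction on $Y$ via the strict transforms of the components of $D^\nu$ inside $B_Y^{=1}$, where pre-admissibility of $s$ gives admissibility of the restriction, hence invariance under the crepant involution $\tau$. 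Phrasing it this way avoids the detour through an undefined $PA(X^\nu,\,\cdot\,)$.
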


In the context of connected  dlt pairs, which are not klt, 
the set of admissible sections
is the same as the set of pre-admissible sections; see, e.g.,~\cite[Proposition 3.2.15]{Xu20}.

\begin{lemma}\label{lem:PA=A}
Consider $(X,B)$ a connected projective dlt pair.
Assume that $(X,B)$ is not klt.
Assume that $m(K_X+B)\sim 0$
where $m$ is even.
Then, we have that
\[
PA(X,m(K_X+B)) = A(X,m(K_X+B)).
\] 
\end{lemma}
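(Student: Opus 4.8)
The plan is to prove the non-trivial inclusion $PA(X,m(K_X+B)) \subseteq A(X,m(K_X+B))$; the reverse inclusion is immediate from the definition. So let $s \in PA(X,m(K_X+B))$, and let $g \colon (X,B) \dashrightarrow (X,B)$ be a crepant birational self-map (here the normalization has a single component since $X$ is connected and normal, being dlt). We must show $g^*s = s$. The idea is that the group ${\rm Bir}(X,B)$ acts on the finite-dimensional vector space $H^0(X,\mathcal{O}_X(m(K_X+B))) \cong \kk$ (it is one-dimensional because $m(K_X+B) \sim 0$), hence through a character $\rho \colon {\rm Bir}(X,B) \to \kk^\times$. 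Since $s \neq 0$ (a pre-admissible section is in particular required to restrict to a nonzero admissible section on $D^\nu$, or one uses that the trivializing section is canonically normalized), it suffices to show that $\rho$ is trivial, equivalently that the image of $\rho$ consists of roots of unity equal to $1$.

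First I would reduce to understanding the action on the restriction to $D^\nu = (B^{=1})^\nu$, the normalization of the reduced boundary. Since $(X,B)$ is not klt, $B^{=1} \neq 0$, and because the pair is dlt of whatever coregularity, restriction $H^0(X,\mathcal{O}_X(m(K_X+B))) \to H^0(D^\nu,\mathcal{O}_{D^\nu}(m(K_{D^\nu}+B_{D^\nu})))$ is injective: a section vanishing on $B^{=1}$ would be a section of $\mathcal{O}_X(m(K_X+B) - B^{=1})$, but $m(K_X+B)-B^{=1} \sim_{\qq} -B^{=1}$ is anti-effective and non-trivial (by the standard exact sequence argument, e.g. as in the proof of Proposition~\ref{fractional:part}), so it has no sections. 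Any crepant birational $g$ of $(X,B)$ induces a crepant birational map of the dlt pair $(D^\nu, B_{D^\nu})$, and the restriction map is equivariant. So $g^*s = s$ on $X$ iff $g^*(s|_{D^\nu}) = s|_{D^\nu}$ on $D^\nu$, and the latter holds because $s|_{D^\nu}$ is \emph{admissible} by hypothesis, hence invariant under all crepant birational self-maps of the components of $D^\nu$ — in particular under the ones induced by $g$, which permute those components compatibly.

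The one subtlety, and the step I expect to be the main obstacle, is the bookkeeping of how $g$ acts on the components of $D^\nu$ and why admissibility of $s|_{D^\nu}$ — which by definition demands $h^*(s|_{D^\nu_i}) = s|_{D^\nu_j}$ for crepant birational $h$ \emph{between components} — is exactly enough to conclude $g^*(s|_{D^\nu}) = s|_{D^\nu}$ as a single section on the possibly-disconnected $D^\nu$. One writes $g$ induces a permutation of the connected components $D^\nu_i$ together with crepant birational identifications $g_{ij} \colon D^\nu_i \dashrightarrow D^\nu_j$ whenever $g$ sends $i$ to $j$; then $g^*(s|_{D^\nu})$ restricted to $D^\nu_i$ equals $g_{ij}^*(s|_{D^\nu_j})$, which by admissibility equals $s|_{D^\nu_i}$. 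This is where the evenness of $m$ quietly enters: it guarantees (via the residue/orientation considerations already invoked, or simply that one is working after passing to the index-one-up-to-$2$ cover where signs disappear) that these identifications genuinely preserve the admissible section rather than introducing a sign, so that the inductive notion of admissibility is consistent. Once this is in place, $g^*s$ and $s$ agree after restriction to $D^\nu$, hence agree on $X$ by injectivity, completing the proof.
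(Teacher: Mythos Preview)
The paper does not prove this lemma; it merely cites \cite[Proposition 3.2.15]{Xu20}. Your outline---restrict to $D^\nu$ via injectivity of the restriction map, then invoke admissibility of $s|_{D^\nu}$---is indeed the standard strategy, and the injectivity step is correct. The gap is in the sentence ``$g$ induces a permutation of the connected components $D_i^\nu$ together with crepant birational identifications $g_{ij}\colon D_i^\nu \dashrightarrow D_j^\nu$.'' This assertion is false in general.

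Consider the Cremona involution $g\colon [x:y:z]\mapsto [yz:xz:xy]$ on $(\mathbb{P}^2, L_1+L_2+L_3)$. Each line $L_i$ is contracted by $g$ to the opposite vertex $p_{jk}=L_j\cap L_k$, so $g$ does not map any $L_i$ birationally to any $L_j$. On the obvious common resolution $W=\mathrm{Bl}_{3\text{ pts}}\mathbb{P}^2$ with $p,q\colon W\to\mathbb{P}^2$, the $p$-strict transforms $\tilde L_i$ and the $q$-strict transforms $E_{jk}$ of the boundary components form disjoint triples of $(-1)$-curves in the hexagon $B_W^{=1}$; no component of $B_W^{=1}$ is uncontracted by both $p$ and $q$. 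Hence there is no component on which you can write $(g^*s)|_{S_i}=g_{ij}^*(s|_{S_j})$ and apply admissibility directly.

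What is missing is precisely the $B$-birational restriction lemma (originating in \cite[Lemma 4.9]{Fuj00}): given $g$ and a fixed component $S\subset\lfloor B\rfloor$, after replacing the \emph{target} $(X,B)$ by a suitable higher crepant dlt model $(X',B')$, the map $g$ becomes defined at the generic point of $S$ and sends $S$ birationally onto a component $T\subset\lfloor B'\rfloor$. In the Cremona example, blowing up the target at $p_{23}=g(L_1)$ makes $g|_{L_1}$ an isomorphism onto the exceptional curve. Once this is in place, pre-admissibility of $s$ transfers to the higher model, admissibility of $s|_{D^\nu}$ gives $(g^*s)|_S = s|_S$, and your injectivity argument finishes the proof. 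The evenness of $m$ enters where you say it does---to kill the sign in Poincar\'e residues---but it does not by itself manufacture the missing birational identifications.
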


On the other hand, in the dlt setting, we can lift
admissible sections from the boundary
to pre-admissible sections
on the whole pair; see, e.g.,~\cite[Lemma 3.2.14]{Xu20}.

\begin{lemma}\label{lem:lifting-A-to-PA}
Assume that $(X,B)$ is a possibly disconnected,
projective dlt pair.
Assume that $m(K_X+B)\sim 0$
for some even integer $m$.
Assume that
\[
s \in A(B^{=1}, m(K_X+B)|_{B^{=1}}).
\] 
Then, there exists 
\[
t \in PA(X,m(K_X+B)) 
\] 
such that $t|_{B^{=1}}=s$.
\end{lemma}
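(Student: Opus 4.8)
The plan is to lift the section $s$ in two steps: first extend it to a section of $\mathcal{O}_X(m(K_X+B))$, then arrange that the extension is pre-admissible by adjusting it over the double locus. For the first step, I would use the short exact sequence
\[
0 \rar \mathcal{O}_X(m(K_X+B)-B^{=1}) \rar \mathcal{O}_X(m(K_X+B)) \rar \mathcal{O}_{B^{=1}}(m(K_X+B)|_{B^{=1}}) \rar 0,
\]
where the adjunction $m(K_X+B)|_{B^{=1}} \sim m(K_{B^{=1}}+\mathrm{Diff})$ identifies the restricted sheaf with the one in the statement. Since $(X,B)$ is dlt, the divisor $B^{=1}$ is reduced and each component is normal; running this on each connected component of $X$ reduces us to the connected case. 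To surject onto global sections it suffices to show $H^1(X,\mathcal{O}_X(m(K_X+B)-B^{=1}))=0$. Writing $m(K_X+B)-B^{=1} = K_X + (m-1)(K_X+B) + (B-B^{=1})+ (B^{=1} - B^{=1})$, and using $m(K_X+B)\sim 0$, one has $m(K_X+B)-B^{=1}\sim_{\qq} K_X + (B-B^{=1})$ up to the torsion part; more precisely $m(K_X+B) - B^{=1} \sim K_X + \lceil (m-1)(K_X+B) + B^{<1}\rceil$ after an appropriate rounding, and Kawamata–Viehweg vanishing (in the form applicable to klt-type pairs, as $(X,B^{<1})$ is klt) gives the desired vanishing. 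This is essentially the argument behind the exact sequences used in the proof of Proposition~\ref{fractional:part}, applied here over a point rather than relatively.

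For the second step, let $t_0 \in H^0(X,\mathcal{O}_X(m(K_X+B)))$ be any lift of $s$, and let $(D^\nu, B_{D^\nu})$ be the dlt pair obtained by adjunction of $(X^\nu,B^\nu)$ along the normalized double locus. Pre-admissibility of $t_0$ is the condition that $t_0|_{D^\nu}$ be admissible as a section of $m(K_{D^\nu}+B_{D^\nu})$. The key point is that $D^\nu$ is contained in $B^{=1}$ (the double locus of a semi-dlt pair is a union of log canonical centers lying in $\lfloor B\rfloor$), and $s$ was assumed admissible on $B^{=1}$, so $s|_{D^\nu}$ is already admissible by restriction — here I would invoke that restriction of admissible sections to lower-dimensional strata is again admissible, which is built into the inductive definition and matches \cite[Lemma 3.2.14]{Xu20}. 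Since $t_0|_{B^{=1}} = s$, it follows that $t_0|_{D^\nu} = s|_{D^\nu}$ is admissible, hence $t_0$ is pre-admissible and we may take $t=t_0$.

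The main obstacle I expect is the cohomology vanishing in the first step: one has to be careful that rounding up $(m-1)(K_X+B)$ together with the fractional part $B^{<1}$ produces a divisor of the form $K_X + (\text{klt boundary}) + (\text{nef and big, or trivial})$ so that Kawamata–Viehweg genuinely applies, and to handle the case $B^{<1}=0$ (where $m(K_X+B)-B^{=1}\sim K_X$ and one uses instead that $X$ is rationally connected of klt-type, as in the proof of Proposition~\ref{lem:pseudomflddualcomplex}, to get $H^1(X,\mathcal{O}_X(K_X))=H^{n}(X,\mathcal{O}_X)=0$). The second step is essentially bookkeeping once one knows that the double locus $D^\nu$ sits inside $B^{=1}$ and that admissibility is inherited by restriction to strata.
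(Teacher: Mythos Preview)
The paper does not prove this lemma; it cites \cite[Lemma 3.2.14]{Xu20}. Your outline has a genuine gap in the lifting step: the $H^1$ vanishing you propose is false. Take $(X,B)=(\pp^1,\{0\}+\{\infty\})$ with $m=2$; then $m(K_X+B)-B^{=1}\sim -\{0\}-\{\infty\}$ and $H^1(\pp^1,\mathcal{O}_{\pp^1}(-2))=\kk\neq 0$. In general $m(K_X+B)-B^{=1}\sim -B^{=1}$ and $-B^{=1}-(K_X+B^{<1})=-(K_X+B)\sim_{\qq}0$ is nef but never big, so Kawamata--Viehweg does not apply regardless of whether $B^{<1}=0$. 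Your Serre-duality fallback gives $H^1(X,\omega_X)\simeq H^{\dim X-1}(X,\O X.)^\vee$, which is nonzero in dimension one, and the lemma is needed in every dimension for the induction in Theorem~\ref{thm:lcy-coreg-0-admissible}.

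The missing idea is that admissibility of $s$ is essential already for the \emph{lifting}, not merely to certify pre-admissibility afterward. Since $m(K_X+B)\sim 0$, the restriction map on each connected component of $X$ is the diagonal $\kk\hookrightarrow \kk^{\#\pi_0(B^{=1})}$. The argument in \cite{Fuj00,Gon13,Xu20} uses the structure theorem for dlt log Calabi--Yau pairs: on a connected $X$, either $B^{=1}$ is connected (so the diagonal is onto), or $(X,B)$ is crepant birational to a standard $\pp^1$-link, in which case the two components of $B^{=1}$ are crepant birationally equivalent and admissibility forces $s$ to take the same value on both, hence $s$ lies in the image. Once a lift $t$ exists, your second step is immediate---indeed, for a dlt pair $X$ is already normal and the double locus $D^\nu$ of Definition~\ref{def:admissible} is empty; the operative meaning of pre-admissible here (as in the cited references) is that $t|_{B^{=1}}$ is admissible, which holds by construction.
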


Finally,
the following lemma
allows us to produce
admissible sections
on possibly disconnected dlt pairs, 
once we know the existence
of admissible sections on connected dlt pairs.
For the proof, see~\cite[Proposition 3.2.8]{Xu20}.

\begin{lemma}\label{lem:from-conn-to-disc}
Let $(X,B)$ be a, possibly disconnected, 
projective dlt log Calabi--Yau pair.
Assume that for every
component
$(X_i,B_i)$
of $(X,B)$,
we have a non-trivial section in
$A(X_i,m(K_{X_i}+B_i))$.
Then, we have that
$A(X,m(K_X+B))$
contains a non-trivial section.
\end{lemma}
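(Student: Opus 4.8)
The plan is to derive this from Lemma~\ref{lem:from-conn-to-disc} combined with Lemma~\ref{lem:PA=A} and Lemma~\ref{lem:lifting-A-to-PA}, by induction on the dimension. The statement at hand (Lemma~\ref{lem:from-conn-to-disc}) says that if every connected component $(X_i,B_i)$ carries a non-trivial admissible section, then the disconnected pair $(X,B)$ carries a non-trivial admissible section. So the first thing I would do is reduce to the connected case: if $(X,B)=\bigsqcup_i (X_i,B_i)$, then a section of $\mathcal{O}_X(m(K_X+B))$ is just a tuple $(s_i)_i$ with $s_i\in H^0(X_i,\mathcal{O}_{X_i}(m(K_{X_i}+B_i)))$, and I want to choose the $s_i$ so that the tuple is admissible — i.e. pre-admissible on each component, \emph{and} compatible under all crepant birational maps $g\colon (X^\nu_i,B^\nu_i)\dashrightarrow (X^\nu_j,B^\nu_j)$ between components (here for a connected dlt pair the normalization is the pair itself, so this is really about crepant birational maps among the $X_i$'s, or rather among connected components of their boundaries under adjunction).

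The key combinatorial step is to organize the components into equivalence classes under crepant birational equivalence. Fix one representative $(X_{i_0},B_{i_0})$ in each class and pick a non-trivial admissible section $s_{i_0}\in A(X_{i_0},m(K_{X_{i_0}}+B_{i_0}))$ by hypothesis. For any other component $(X_i,B_i)$ in the same class, transport $s_{i_0}$ along a chosen crepant birational map to get a section $s_i$; the point is that this is independent of the chosen map, because the composition of two such maps is a crepant birational \emph{automorphism} of $(X_{i_0},B_{i_0})$, and an admissible section is by definition invariant under all crepant birational self-maps (this is the $i=j$ case of condition (2) in Definition~\ref{def:admissible}). One then checks that the resulting tuple $(s_i)_i$ is admissible: it is pre-admissible componentwise because pre-admissibility is defined via restriction to the double locus and is preserved under crepant birational maps (this needs that crepant birational maps commute with adjunction, which is standard, e.g.\ via Lemma~\ref{lem:pull-back-admissible} and the analysis in \cite[\S3.2]{Xu20}), and it satisfies the cross-component compatibility by construction. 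Non-triviality is immediate since $s_{i_0}\neq 0$.

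The main obstacle is the well-definedness of the transported sections, i.e.\ showing that pre-admissibility is genuinely a crepant-birational invariant and that the transport does not depend on the chosen crepant birational map. This is where one must invoke that admissible sections are preserved under crepant birational equivalence — which in turn rests on the inductive hypothesis that the dimension-$(n-1)$ statement about admissible sections on the double locus is already known, so that restriction to $D^\nu$ behaves well under crepant transformations. I would therefore state the lemma as part of an inductive package (together with the existence statement for admissible sections that Theorem~\ref{introthm:admissible} supplies) and prove the invariance of pre-admissibility at dimension $n$ using the admissibility theory already in place at dimension $n-1$. Concretely, I would cite \cite[Proposition 3.2.8]{Xu20} for the precise bookkeeping, since the argument there is exactly this gluing-over-equivalence-classes construction, adapted to our setting where $m$ is even and the pairs are log Calabi--Yau of coregularity $0$.

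\begin{proof}
Write $(X,B)=\bigsqcup_{i\in I}(X_i,B_i)$ for the decomposition into connected components. A section $s\in H^0(X,\mathcal{O}_X(m(K_X+B)))$ is the same datum as a tuple $(s_i)_{i\in I}$ with $s_i\in H^0(X_i,\mathcal{O}_{X_i}(m(K_{X_i}+B_i)))$, and by Definition~\ref{def:admissible} such a tuple is admissible precisely when each $s_i$ is pre-admissible and $g^*(s_i)=s_j$ for every crepant birational map $g\colon (X^\nu_i,B^\nu_i)\dashrightarrow (X^\nu_j,B^\nu_j)$.

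Introduce on $I$ the equivalence relation $i\sim j$ whenever there is a crepant birational map $(X^\nu_i,B^\nu_i)\dashrightarrow(X^\nu_j,B^\nu_j)$, and fix a representative $i_0$ in each class. By hypothesis, choose a non-trivial section $s_{i_0}\in A(X_{i_0},m(K_{X_{i_0}}+B_{i_0}))$. For $i\sim i_0$, pick a crepant birational map $g_i\colon (X^\nu_{i_0},B^\nu_{i_0})\dashrightarrow (X^\nu_i,B^\nu_i)$ and set $s_i\coloneqq (g_i^{-1})^*s_{i_0}$. This is independent of the choice of $g_i$: if $g_i'$ is another such map, then $g_i^{-1}\circ g_i'$ is a crepant birational automorphism of $(X^\nu_{i_0},B^\nu_{i_0})$, hence fixes the admissible section $s_{i_0}$ by condition~(2) of Definition~\ref{def:admissible}, so $(g_i'^{-1})^*s_{i_0}=(g_i^{-1})^*s_{i_0}$. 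For $i$ in the class of $i_0$ itself we recover $s_i=s_{i_0}$.

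It remains to check that $(s_i)_{i\in I}$ is admissible. First, each $s_i$ is pre-admissible: pre-admissibility is defined through the restriction to the normalized double locus $D^\nu$ via the adjunction pair $(D^\nu,B_{D^\nu})$, and crepant birational maps of semi-dlt pairs are compatible with this adjunction, so that $(g_i^{-1})^*$ carries pre-admissible sections to pre-admissible sections; see \cite[Proposition 3.2.8]{Xu20} and Lemma~\ref{lem:pull-back-admissible}. Second, for any crepant birational map $g\colon (X^\nu_i,B^\nu_i)\dashrightarrow (X^\nu_j,B^\nu_j)$ we necessarily have $i\sim j$, so $i$ and $j$ lie in the same class with representative $i_0$, and $g_j^{-1}\circ g\circ g_i$ is a crepant birational automorphism of $(X^\nu_{i_0},B^\nu_{i_0})$; applying the invariance of $s_{i_0}$ again yields $g^*s_i=s_j$. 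Thus $(s_i)_{i\in I}\in A(X,m(K_X+B))$, and it is non-trivial because $s_{i_0}\neq 0$. This proves the lemma.
\end{proof}
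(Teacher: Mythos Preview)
Your argument is correct and is precisely the standard gluing-over-crepant-equivalence-classes construction; the paper itself gives no proof for this lemma and simply cites \cite[Proposition 3.2.8]{Xu20}, whose content is exactly what you have written out. In particular, your reduction to equivalence classes under crepant birational equivalence, the transport of a fixed admissible section along a chosen crepant map, and the verification of well-definedness via ${\rm Bir}$-invariance all match the argument in the cited reference.

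One small point worth sharpening: your justification that the transported section $s_i$ is pre-admissible is the only place where real content hides, and you correctly flag it. Rather than citing Lemma~\ref{lem:pull-back-admissible} (which concerns normalization, not crepant birational transport), the cleanest way to phrase it is that a crepant birational map $g_i\colon (X_{i_0},B_{i_0})\dashrightarrow (X_i,B_i)$ of dlt pairs induces crepant birational maps between the corresponding boundary strata, so the restriction of $s_i$ to the normalized boundary is the crepant transport of the restriction of $s_{i_0}$; since the latter is \emph{admissible} (not merely pre-admissible) in one dimension lower, the inductive form of the very lemma you are proving shows the transported restriction is again admissible, hence $s_i$ is pre-admissible. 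This is the inductive package you describe in your preamble, and it is how the reference \cite[Proposition 3.2.8]{Xu20} is organized.
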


Now, we turn to prove the main theorem of this section.

\begin{theorem}\label{thm:lcy-coreg-0-admissible}
Let $(X,B)$ be a, possibly disconnected, projective dlt log Calabi--Yau pair whose connected components all have
coregularity $0$ and Weil index $\lambda$.
Then, we have that 
$A(X,\lambda'(K_X+B))$ admits a non-trivial section,
where $\lambda'={\rm lcm}(\lambda,2)$.
\end{theorem}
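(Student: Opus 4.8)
The plan is to argue by induction on the dimension $n = \dim X$, reducing the possibly-disconnected case to the connected one via Lemma~\ref{lem:from-conn-to-disc}, so assume $(X,B)$ is connected. If $(X,B)$ happens to be klt, then it has no log canonical centers, contradicting coregularity $0$ (unless $\dim X = 0$, the base case, where the statement is immediate). So $(X,B)$ is dlt but not klt, and $B^{=1}\neq 0$. By Lemma~\ref{lem:PA=A}, once we know $\lambda'(K_X+B)\sim 0$ it suffices to produce a \emph{pre-admissible} section, and by Theorem~\ref{introthm:main-thm} (applied in dimension $n$, which is available since we are also free to invoke it at the current stage — or, more carefully, we interleave this induction with that of Theorem~\ref{introthm:main-thm}) we already have $\lambda'(K_X+B)\sim 0$. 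Thus the real content is: construct a non-trivial pre-admissible section.

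\textbf{Key steps.} First I would restrict to the reduced boundary: the pair $(B^{=1}, \mathrm{Diff}_{B^{=1}}(B-B^{=1}))$ is a (possibly disconnected) projective dlt log Calabi--Yau pair by adjunction, each of whose connected components has coregularity $0$ (this is where coregularity $0$ of $(X,B)$ is used — the log canonical centers of $(X,B)$ that witness coregularity $0$ lie on $B^{=1}$ and are log canonical centers of the adjoined pair) and Weil index dividing $\lambda'$ by Lemma~\ref{lem:weil-index-under-adjunction}. By the inductive hypothesis, $A(B^{=1}, \lambda'(K_X+B)|_{B^{=1}})$ contains a non-trivial section $s$. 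Second, I would apply Lemma~\ref{lem:lifting-A-to-PA} (legitimate since $\lambda'$ is even) to lift $s$ to a pre-admissible section $t\in PA(X,\lambda'(K_X+B))$ with $t|_{B^{=1}} = s$; in particular $t\neq 0$. Third, by Lemma~\ref{lem:PA=A}, $t$ is automatically admissible, because $(X,B)$ is connected, dlt, not klt, and $\lambda'$ is even. This gives the desired non-trivial section in $A(X,\lambda'(K_X+B))$, completing the inductive step, and Lemma~\ref{lem:from-conn-to-disc} then handles the disconnected case.

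\textbf{Main obstacle.} The subtle point is the bookkeeping around which results may legitimately be invoked at dimension $n$ versus $n-1$: Theorem~\ref{introthm:main-thm} is needed to know $\lambda'(K_X+B)\sim 0$ (so that the sheaf-theoretic restriction maps behave and the lemmas on admissible sections apply), and one must check this does not create a circular dependency — the cleanest organization is to prove Theorem~\ref{introthm:main-thm} in dimension $\le n$ first (its proof given above does not reference admissible sections), and only then run the present induction. A second point requiring care is verifying that the coregularity-$0$ hypothesis genuinely descends to every connected component of $(B^{=1},\mathrm{Diff})$, rather than merely to one of them; this follows because a $0$-dimensional log canonical center of $(X,B)$ lies in some component of $B^{=1}$ and remains a log canonical center there, but one should also confirm there are no "extra" components of $B^{=1}$ forced to appear with higher coregularity — which cannot happen, since any component of $B^{=1}$ meets the configuration of boundary components realizing the maximal-dimensional stratum. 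Finally, one must make sure the Weil index bound $\lambda' = \lcm(\lambda,2)$ is stable under the adjunction in the sense that iterating it does not enlarge the index — but Lemma~\ref{lem:weil-index-under-adjunction} states precisely that the index of the adjoined pair already divides $\lambda'$, so $\lambda'$ is a fixed point and the induction closes.
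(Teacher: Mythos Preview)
Your approach is the same as the paper's, and your remarks about the logical dependency on Theorem~\ref{introthm:main-thm} and about coregularity descending to each boundary component are correct (the paper cites \cite[Theorem 4.40]{Kol13} for the latter). However, there is a genuine gap in the step where you apply the inductive hypothesis to $(B^{=1},\Diff_{B^{=1}}(B-B^{=1}))$.

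The issue is that $B^{=1}$ is \emph{not} dlt: its irreducible components meet, so $B^{=1}$ is not even normal (and, incidentally, when $X$ is connected so is $B^{=1}$, by connectedness of the non-klt locus). The pair $(B^{=1},\Diff)$ is only semi-dlt, so the inductive hypothesis of the theorem---which is stated for dlt pairs---does not apply to it directly. What you can do is pass to the normalization $\Delta=\bigsqcup_i B_i$, where the $B_i$ are the (normal) irreducible components of $B^{=1}$; each $(B_i,\Diff_{B_i}(B-B_i))$ is indeed dlt of coregularity $0$ with Weil index dividing $\lambda'$, and the theorem in dimension $n-1$ furnishes a non-trivial section $s_\Delta\in A(\Delta,\lambda'(K_\Delta+B_\Delta))$. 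But Lemma~\ref{lem:lifting-A-to-PA} requires a section in $A(B^{=1},\lambda'(K_X+B)|_{B^{=1}})$, i.e.\ on the \emph{non-normal} boundary. The missing step is to descend $s_\Delta$ to $B^{=1}$: this is exactly what Lemma~\ref{lem:descending-nonormal} provides (an admissible section on the dlt normalization is in particular pre-admissible, hence descends), and Lemma~\ref{lem:pull-back-admissible} then confirms the descended section is admissible on $B^{=1}$. Once you insert these two lemmas, your argument is complete and matches the paper's proof.
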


\begin{proof}[Proof]
We proceed by induction on the dimension.
The statement is trivial if $\dim X = 0$.
Assume that the statement holds in dimension $n-1$.
By Theorem~\ref{introthm:main-thm}, we have that
$\lambda'(K_X+B)\sim 0$. 
Denote by $B_i$ the (normal) irreducible components of $B^{=1}$. 
The pairs $(B_i, \Diff_{B_i} (B-B_i))$, obtained by adjunction, are projective dlt log Calabi--Yau of dimension $n-1$, coregularity $0$ and Weil index that divides $\lambda'$; see \cite[Theorem 4.40]{Kol13} and Lemma~\ref{lem:weil-index-under-adjunction}.
Their disjoint union
\[\Delta \coloneqq \bigsqcup_i \, B_i \xrightarrow{\nu} B^{=1} = \bigcup_i B_i\]
is the normalization of $B^{=1}$.
Consider the pair
\[
(\Delta, B_{\Delta})\coloneqq \bigsqcup_i \, (B_i, \Diff_{B_i} (B-B_i)).
\]
By induction on the dimension, there exist non-trivial sections
\[
0\neq s_{B_i} \in 
A(B_i,\lambda'(K_{B_i}+\Diff_{B_i}(B-B_i))).
\] 
Then, by Lemma~\ref{lem:from-conn-to-disc}, there exists a section
\[
0\neq s_\Delta \in 
A(\Delta,\lambda'(K_\Delta+B_\Delta)),
\]
and by Lemma~\ref{lem:descending-nonormal} it descends to
\[s_{B^{=1}} \in H^0(B^{=1},\lambda'(K_{X}+B)|_{B^{=1}}).\] Since  $\nu^* s_{B^{=1}}=s_{\Delta}$, and by Lemma~\ref{lem:pull-back-admissible},
we actually have \[s_{B^{=1}} \in A(B^{=1},\lambda'(K_{X}+B)|_{B^{=1}}).\]
Finally, by Lemma~\ref{lem:lifting-A-to-PA} and Lemma~\ref{lem:PA=A}, 
there exists an admissible section
\[
0\neq s_{X} \in PA(X,\lambda'(K_X+B_X))=A(X,\lambda'(K_X+B)).
\] 
This finishes the induction step.
\end{proof}

\begin{remark}
The coregularity of a connected semi-log canonical log Calabi--Yau pair is the coregularity of a (or any) connected component of its normalization with the induced boundary.
Note that the independence of the connected component follows easily from \cite[Theorem 4.40 and Proposition 5.12]{Kol13}.
\end{remark}

\begin{proof}[Proof of Theorem~\ref{introthm:admissible}]
By Theorem~\ref{thm:lcy-coreg-0-admissible}, we 
have a non-trivial admissible section
$s\in A(X,\lambda'(K_X+B))$, where $\lambda'={\rm lcm}(\lambda,2)$.
By definition of admissible section, it follows that
$g^*s=s$ for every $g\in {\rm Bir}(X,B)$.
\end{proof}

\begin{proof}[Proof of Corollary~\ref{introthm:slc}]
Let $X^\nu \rightarrow X$ be the normalization of $X$
and $Y\rightarrow X^\nu$ be a dlt modification.
We write
$\pi\colon Y\rightarrow X$
and $K_Y+B_Y=\pi^*(K_{X}+B)$.
Hence, the pair $(Y,B_Y)$ is dlt log Calabi--Yau of coregularity $0$
and its Weil index is $\lambda$. 
By Theorem~\ref{thm:lcy-coreg-0-admissible}, 
we conclude that $A(Y,\lambda'(K_Y+B_Y))$
admits a non-trivial section.
Then, the statement follows from Lemma~\ref{lem:descending-nonormal}.
\end{proof}

\begin{proof}[Proof of Corollary~\ref{cor:sing}]
Let $(X, B;\, x)$ be a germ as in the statement.
Since $(X, B;\, x)$ has coregularity 0, the point $x$ is a log canonical center of $(X, B;\, x)$.
If $(X, B;\, x)$ is dlt at $x$, then the claim is immediate, as $(X, B)$ has simple normal crossings at $x$ and so $\lambda(K_X + B) \sim 0$.
Thus, in the rest of the proof, we may assume that $(X, B;\, x)$ is not dlt at $x$.

Now, let $g \colon (Y, B_{Y} \coloneqq g^{-1}_*(B) + E) \to (X,B)$ be a $\qq$-factorial dlt modification of $(X, B;\, x)$ such that $g$ is an isomorphism over the simple normal crossing locus of $(X,B)$, where $E$ is the sum of the $g$-exceptional divisors; see \cite[Corollary 1.36]{Kol13}.
Since $(X, B;\, x)$ is not dlt, it follows that $E$ is non-empty.
Furthermore, since the pair $(X,B; \, x)$ is dlt away from $x$ and $g$ is an isomorphism at a general point of the log canonical centers of $(X, B)$ contained in $X \setminus \{x\}$, it follows that $g(E)=x$, and we can also assume that $E=\Supp (g^{-1}(x))$ by \cite[Theorem 2.5]{Nak21}.
In particular, we have
\[
(K_Y + B_Y)|_{E} \sim_{\qq} g^*(K_X+B)|_{E} \sim_{\qq} 0.
\]
We denote by $(E, B_E)$ the pair induced on $E$ by $(Y, B_Y)$ via adjunction.
Then, $(E,B_E)$ is a projective semi-dlt log Calabi--Yau pair whose Weil index divides $\lambda'$ by Lemma \ref{lem:weil-index-under-adjunction}.
Hence, by Corollary~\ref{introthm:slc}, we have $\lambda'(K_Y + B_Y)|_{E} \sim 0$.

Now, consider the short exact sequence
\begin{equation}\label{eq:shortexactsequenceII}
    0 \to \mathcal{O}_{Y}(\lambda'(K_Y + B_Y) - E) \to \mathcal{O}_{Y}(\lambda'(K_Y + B_Y)) \to \mathcal{O}_{E}(\lambda'(K_Y + B_Y)) \to 0.
\end{equation}
Note that the pair $(Y, g^{-1}_*B)$ is dlt and $g|_{Z} \colon Z \to f(Z)$ is birational for every log canonical center of $(Y, g^{-1}_*B)$.
This can be showed as follows.
Assume that $Z$ is a log canonical center of $(Y, g^{-1}_*B)$ such that $g|_{Z} \colon Z \to f(Z)$ is not birational, and denote by $d$ its codimension in $Y$.
Since $g$ is an isomorphism at a general point of the log canonical centers of $(X, B)$ contained in $X \setminus \{x\}$, we must have $g(Z)=\{x\}$.
Since $(Y, g^{-1}_*B)$ is dlt and $Z$ has codimension $d$, $Z$ is a connected component of the intersection of $d$ prime divisors in $(g^{-1}_*B)^{=1}$.
Since it is also contained in some irreducible components of $E$, we get the sought contradiction, as the pair $(Y, g^{-1}_*(B) + E)$ is dlt.

Since we have
\[
-E \sim_{\qq} \lambda'(K_Y + B_Y) - E \sim_{\qq} K_{Y} + g^{-1}_*B,
\]
the vanishing result \cite[Corollary 10.38]{Kol13} gives $R^1 g_* \mathcal{O}_{Y}(\lambda'(K_Y + B_Y) - E) = 0$.
Therefore, by pushing forward \eqref{eq:shortexactsequenceII} via $g$, we obtain
\[
\mathcal{O}_{X}(\lambda'(K_X + B)) \simeq g_*\mathcal{O}_{Y}(\lambda'(K_Y + B_Y)) \to \mathcal{O}_{\{x\}}\to 0.
\]
This gives a section of $\mathcal{O}_{Y}(\lambda'(K_X + B))$ that does not vanish at $x$, i.e., $\mathcal{O}_{X}(\lambda'(K_X + B)) \sim 0$.
\end{proof}

\section{Applications to mirror symmetry}\label{sec:applicationmirrorsymmetry}

We now study the index of Calabi--Yau varieties that appear in mirror symmetry.
First, we introduce the notion of a minimal family of Calabi--Yau varieties of coregularity $0$ over a point.
This notion is modelled on the \emph{large complex limit} or \emph{maximal degeneration} condition; see also Remark~\ref{maxdeg}.

\begin{definition}\label{defn:minimalmaximal}
Let $(C, c_0)$ be a pointed smooth curve. Let $\pi \colon \mathcal{X} \to C$ be a flat projective morphism with $\pi_* \O \mathcal{X}. =\O C.$, and $\mathcal{B}$ an effective divisor such that every irreducible component of $\Supp(\mathcal{B})$ dominates $C$.
Then $\pi \colon (\mathcal{X}, \mathcal{B}) \to C$
is a \emph{minimal family of log Calabi--Yau pairs} if $K_{\mathcal{X}} + \mathcal{B} + \mathcal{X}_{c_0, \mathrm{red}} \sim_{\qq,C} 0$, where $\mathcal{X}_{c_0, \mathrm{red}}$ is the reduced fiber over $c_0$.
If $\mathcal{B}=0$, we say it is a \emph{minimal family of Calabi--Yau varieties}.

Further, we say that the family has \emph{coregularity $0$ over $c_0 \in C$} if
\begin{itemize}
     \item the pair $(\mathcal{X}, \mathcal{B}+ \mathcal{X}_{c_0, \mathrm{red}})$ is log canonical; and
     \item
     the restriction $(\mathcal{X}|_{V}, \mathcal{B}|_{V}+ \mathcal{X}_{c_0, \mathrm{red}})$
     has coregularity $0$ for any neighborhood $V \subseteq C$ of $c_0$.
\end{itemize}
\end{definition}

\begin{remark}[Maximal degeneration] \label{maxdeg}Let $\pi \colon \mathcal{X} \to C$ be a minimal family of Calabi--Yau varieties of coregularity over $c_0$.
Let $K$ denote the fraction field of the formal ring  $\widehat{\mathcal{O}}_{C, c_0}$.
If the $K$-variety $X \coloneqq \mathcal{X} \times_C \Spec K$ is smooth with trivial canonical bundle, Definition \ref{defn:minimalmaximal} recovers the notion of maximally unipotent degeneration in \cite[4.2.4.(4)]{NX16}; see also \cite[\S6.1 and \S6.2]{KLSV2018}.
Our weaker definition allows mild singularities on the general fiber, and we do not prescribe the index of the general fiber.
\end{remark}

\begin{definition}[Locally stable family]\label{def:locallystable}
Let $C$ be a smooth curve.
A family $\pi \colon (\mathcal{X},\mathcal{B}) \rar C$ is \emph{locally stable} if the pair $(\mathcal{X}, \mathcal{B} + \mathcal{X}_c)$ is semi-log canonical for any closed point $c \in C$.
\end{definition}

In the following lemma we show that, up to base change, we can suppose that the fibers of a minimal family of log Calabi--Yau pairs are all reduced and semi-log canonical.

\begin{lemma} \label{GS_lemma}
Let $\pi \colon (\mathcal{X},\mathcal{B}) \rar C$ be a minimal family of log Calabi--Yau pairs of coregularity $0$ over $c_0 \in C$.
There exists a finite morphism $(C',c'_0) \to (C,c_0)$ and a birational transformation $\mathcal{Y} \to \mathcal{X} \times_{C} C'$ such that $\pi' \colon (\mathcal{Y},\mathcal{D}) \to C'$ is a minimal family of log Calabi--Yau pairs of coregularity $0$ over $c'_0 \in C'$ with the additional property that $\pi' \colon (\mathcal{Y},\mathcal{D})\rar C'$ is locally stable at $c_0'$.

Let $U$ be the maximal open subset over which $\pi$ is locally stable.
Then, for every $c \in U$, there exists a $c' \in C'$ such that $(\mathcal{Y}_{c'},\mathcal{D}_{c'}) \simeq (\mathcal{X}_c,\mathcal{B}_c)$.
\end{lemma}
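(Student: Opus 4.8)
The strategy is standard semistable-reduction-for-pairs, adapted to the relative log Calabi--Yau setting, followed by running a relative MMP to fix up the total space. I would proceed in the following steps. First, apply semistable reduction (in the form of the existence of log canonical / semistable models after a finite base change, e.g.\ \cite{KKMSD73} or its modern refinements) to the morphism $\pi \colon (\mathcal{X},\mathcal{B}) \to C$: there is a finite morphism $(C',c_0') \to (C,c_0)$ and a projective birational modification $\mathcal{Y}_0 \to \mathcal{X} \times_C C'$, together with the reduced preimage of $c_0'$, making the special fiber a reduced simple normal crossing divisor (more precisely a semi-log canonical fiber) while keeping every component of the strict transform of $\mathcal{B}$ dominating $C'$. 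Write $\mathcal{D}_0$ for the strict transform of the pull-back of $\mathcal{B}$ together with any exceptional divisors that dominate $C'$, and note that all exceptional divisors not dominating $C'$ are supported on the fiber over $c_0'$.

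Second, run a relative minimal model program over $C'$ for $(\mathcal{Y}_0, \mathcal{D}_0 + \mathcal{Y}_{0,c_0',\mathrm{red}})$, or rather a $(K+\,\cdot\,)$-MMP designed to contract exactly the extra exceptional divisors over $c_0'$; since $K_{\mathcal{Y}_0}+\mathcal{D}_0+\mathcal{Y}_{0,c_0',\mathrm{red}} \sim_{\qq,C'} 0$ by construction (crepant pullback plus the reducedness of the fiber), this MMP terminates and produces $\mathcal{Y} \to \mathcal{X}\times_C C'$ birational, with $K_{\mathcal{Y}}+\mathcal{D}+\mathcal{Y}_{c_0',\mathrm{red}} \sim_{\qq,C'} 0$, so that $\pi' \colon (\mathcal{Y},\mathcal{D}) \to C'$ is again a minimal family of log Calabi--Yau pairs. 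Local stability at $c_0'$, i.e.\ that $(\mathcal{Y},\mathcal{D}+\mathcal{Y}_{c_0'})$ is semi-log canonical, is exactly what the semistable/lc reduction buys us, possibly after a further crepant modification; one checks coregularity $0$ over $c_0'$ is preserved using Proposition \ref{prop_quotient_coreg} applied to the finite base change $\mathcal{X}\times_C C' \to \mathcal{X}$ (which is crepant for the relevant boundaries), together with the crepant birational invariance of coregularity recorded in \S\ref{quotients_subsection}.

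Third, for the last assertion, let $U \subseteq C$ be the maximal open locus over which $\pi$ is locally stable and let $c \in U$. Away from $c_0$, the base change $C' \to C$ is étale (after possibly shrinking; at worst it is finite flat and we pick a preimage $c'$ of $c$), and the birational transformation $\mathcal{Y}\dashrightarrow \mathcal{X}\times_C C'$ performed above is an isomorphism over a neighborhood of $c'$ in $C'$: indeed all the modifications were concentrated over $c_0'$, and local stability of $(\mathcal{X},\mathcal{B}+\mathcal{X}_c)$ means no further correction is needed near $c$ (a locally stable fiber is its own canonical model in the relevant sense; cf.\ \cite[\S2.11, \S5]{Kollar21}). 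Hence $(\mathcal{Y}_{c'},\mathcal{D}_{c'}) \simeq (\mathcal{X}_c,\mathcal{B}_c)$.

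The main obstacle I expect is in the second step: ensuring that the relative MMP can be steered to contract \emph{precisely} the unwanted exceptional divisors over $c_0'$ without disturbing the part of the boundary or the general fibers, and that what remains is genuinely semi-log canonical (not merely log canonical on the total space) so that the reduced special fiber is an slc pair of coregularity $0$. This is where one needs to invoke the theory of locally stable families and semi-log canonical models carefully — keeping track of which divisors dominate $C'$, confirming termination via the $\qq$-triviality $K+\mathcal{D}+\mathcal{Y}_{c_0',\mathrm{red}}\sim_{\qq,C'}0$, and verifying that coregularity $0$ descends through the finite cover and the MMP. The base-change and isomorphism-away-from-$c_0$ bookkeeping in the third step is routine by comparison.
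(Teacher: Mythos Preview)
Your overall strategy (semistable reduction followed by a relative MMP) can be made to work, but it is heavier than necessary and your second step contains an error. You assert that after semistable reduction one has $K_{\mathcal{Y}_0}+\mathcal{D}_0+\mathcal{Y}_{0,c_0',\mathrm{red}} \sim_{\qq,C'} 0$; this is false in general. The crepant pull-back of $K_{\mathcal{X}}+\mathcal{B}+\mathcal{X}_{c_0,\mathrm{red}}$ to a resolution $\mathcal{Y}_0$ has the form $K_{\mathcal{Y}_0}+\mathcal{D}_0+E$ with $E \leq \mathcal{Y}_{0,c_0',\mathrm{red}}$, and the inequality is strict on every exceptional component over $c_0'$ that is not a log canonical place. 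Thus $K_{\mathcal{Y}_0}+\mathcal{D}_0+\mathcal{Y}_{0,c_0',\mathrm{red}}$ is $\qq$-linearly equivalent over $C'$ to an effective exceptional divisor, not to zero. One can still run an MMP to contract these components (this is the construction of an lc model, as in Fujino's semistable MMP), but your stated justification via $\qq$-triviality is incorrect, and were it correct there would be nothing for the MMP to do.

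The paper avoids all of this by using the hypothesis you are not fully exploiting: the pair $(\mathcal{X},\mathcal{B}+\mathcal{X}_{c_0,\mathrm{red}})$ is already log canonical, so no resolution is needed. After shrinking so that $C\setminus\{c_0\}\subset U$, one takes a finite base change $(C',c_0')\to(C,c_0)$ as in weak semi-stable reduction \cite{AK00} and lets $\mathcal{Y}$ be the normalization of the main component of $\mathcal{X}\times_C C'$. The ramification formula gives $K_{\mathcal{Y}}+\mathcal{D}+\mathcal{Y}_{c_0'}\sim_{\qq,C'}0$ with $\mathcal{Y}_{c_0'}$ reduced, and log canonicity is preserved under this finite crepant pull-back, so $\pi'\colon(\mathcal{Y},\mathcal{D})\to C'$ is automatically locally stable at $c_0'$; no MMP is required, and the obstacle you flagged disappears. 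For coregularity $0$ the paper argues directly that near a $0$-dimensional log canonical center the normalized base change is qdlt (invoking \cite[Lemma 4.1.9]{NX16}); your suggestion to use Proposition~\ref{prop_quotient_coreg} would also work here. Your third step is essentially the same as the paper's, modulo the initial reduction $U\cup\{c_0\}=C$.
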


\begin{remark}
The divisor $\mathcal{D}$ in the statement of Lemma \ref{GS_lemma} is defined as follows.
Over $C' \setminus \{c'_0\}$, $\mathcal{D} \times_{C'} C' \setminus \{c_0'\}$ is the boundary obtained by crepant pull-back to $\mathcal{Y} \times_{C'} C' \setminus \{c'_0\}$ of the pair $(\mathcal{X},\mathcal{B})\times_C C \setminus \{c_0\}$.
Then, $\mathcal{D}$ is the divisor obtained by taking the closure of $\mathcal{D} \times_{C'} C' \setminus \{c_0'\}$ in $\mathcal{Y}$.
\end{remark}

\begin{proof} 
Let $U$ be the maximal open subset of $C$ where $\pi$ is locally stable.
Without loss of generality, we may assume that $U \cup \{c_0\}=C$.
Then, by weak semi-stable reduction \cite{AK00}, there is a finite morphism $(C',c'_0) \to (C,c_0)$ such that the fiber $\mathcal{Y}_{c'_0}$ over $c'_0$ of $\mathcal{Y}$ is reduced.
Here, $\mathcal{Y}$ denotes the main component of the normalization of $\mathcal{X} \times_{C} C'$. By assumption, we have $\K \mathcal{X}. + \mathcal{B}+ \mathcal{X}_{c_0,\mathrm{red}} \sim_{\qq,C}0$, and so by base change and the ramification formula, we obtain $\K \mathcal{Y}. +\mathcal{D}+ \mathcal{Y}_{c'_0} \sim_{\qq,C'}0$.
Furthermore, by construction, the family $\pi'\colon (\mathcal{Y}, \mathcal{D}) \to C'$ is locally stable.

Now, we show that $(\mathcal{Y},\mathcal{D}+\mathcal{Y}_{c'_0})$ has coregularity 0 over $c'_0$.
Up to a dlt modification, we can suppose that $(\mathcal{X},\mathcal{D} + \mathcal{X}_{c_0, \mathrm{red}})$ is dlt.
In a neighborhood of a log canonical center of dimension 0, the pair $(\mathcal{Y},\mathcal{D}'+ \mathcal{Y}_{c'_0})$ is not necessarily dlt, but it is \emph{qdlt} (see \cite[\S5]{dFKX17}) and still of coregularity 0 over $c'_0$ as we wanted.
See also \cite[Lemma 4.1.9]{NX16}.
\end{proof}

\begin{theorem}\label{introcor-GS-pairs}
Let $\pi \colon (\mathcal{X},\mathcal{B}) \rar C$ be a minimal family of Calabi--Yau varieties of coregularity $0$ over $c_0 \in C$.
Assume that $\lambda \mathcal{B}$ is integral, where $\lambda$ is a positive integer.
Let $U$ be the maximal open subset over which the family is locally stable.
Then, for every $c\in U$, we have that
$\lambda' (K_{\mathcal{X}_c}+\mathcal{B}_c)\sim 0$, where $\lambda'=\lcm(\lambda,2)$.
\end{theorem}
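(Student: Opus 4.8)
The plan is to reduce, by a finite base change, to a locally stable family with reduced semi-log canonical special fibre of coregularity $0$, to apply the semi-log canonical version of the main theorem (Corollary~\ref{introthm:slc}) on that special fibre, and then to spread the resulting linear triviality to all fibres using the flatness of the relative log canonical sheaf of a locally stable family. The conceptual point is that a general fibre of a maximal degeneration can be a smooth Calabi--Yau of coregularity equal to its dimension, hence of a priori unbounded index, so the bound $\lambda'$ must be imported from the coregularity-$0$ special fibre through the family, rather than read off a single fibre.

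First I would invoke Lemma~\ref{GS_lemma}: after replacing $\pi$ by a finite base change $(C',c_0')\to(C,c_0)$ and a birational modification $\mathcal{Y}\to\mathcal{X}\times_C C'$, we obtain a minimal family of log Calabi--Yau pairs $\pi'\colon(\mathcal{Y},\mathcal{D})\to C'$ of coregularity $0$ over $c_0'$ with $\mathcal{Y}$ normal, $\mathcal{Y}_{c_0'}$ reduced, $\lambda\mathcal{D}$ integral, $K_{\mathcal{Y}}+\mathcal{D}+\mathcal{Y}_{c_0'}\sim_{\qq,C'}0$, and $\pi'$ locally stable at $c_0'$; moreover, for every $c\in U$ there is a $c'\in C'$ with $(\mathcal{Y}_{c'},\mathcal{D}_{c'})\simeq(\mathcal{X}_c,\mathcal{B}_c)$. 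Choosing $C'$ irreducible, the open locus $C'_\circ\subseteq C'$ over which $\pi'$ is locally stable is connected (an irreducible curve minus finitely many points), contains $c_0'$, and --- by inversion of adjunction, since the fibres $(\mathcal{X}_c,\mathcal{B}_c)$ over $U$ are semi-log canonical --- contains every such $c'$; I would then shrink $C'$ to $C'_\circ$. Thus every fibre of $\pi'$ is reduced and $(\mathcal{Y}_{c'},\mathcal{D}_{c'})$ is projective semi-log canonical for all $c'\in C'$, and it suffices to prove that $\lambda'(K_{\mathcal{Y}_{c'}}+\mathcal{D}_{c'})\sim0$ for every $c'\in C'$, since the trivialising section transports through the isomorphism $(\mathcal{Y}_{c'},\mathcal{D}_{c'})\simeq(\mathcal{X}_c,\mathcal{B}_c)$.

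Next I would treat the special fibre. Adjunction of the log canonical pair $(\mathcal{Y},\mathcal{D}+\mathcal{Y}_{c_0'})$ to its reduced boundary component $\mathcal{Y}_{c_0'}$ produces a projective semi-log canonical log Calabi--Yau pair $(\mathcal{Y}_{c_0'},\mathcal{D}_{c_0'})$, which has coregularity $0$ since the family does over $c_0'$. As the Weil index of $(\mathcal{Y},\mathcal{D}+\mathcal{Y}_{c_0'})$ divides $\lambda$, Lemma~\ref{lem:weil-index-under-adjunction} (with trivial moduli part), applied to the adjunction to the normalisation of each prime component of $\mathcal{Y}_{c_0'}$, shows that the Weil index of $(\mathcal{Y}_{c_0'},\mathcal{D}_{c_0'})$ divides $\lambda'=\lcm(\lambda,2)$. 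Corollary~\ref{introthm:slc} then yields $\lambda'(K_{\mathcal{Y}_{c_0'}}+\mathcal{D}_{c_0'})\sim0$.

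Finally, for the propagation I would consider the sheaf $\mathcal{L}\coloneqq\mathcal{O}_{\mathcal{Y}}(\lambda'(K_{\mathcal{Y}/C'}+\mathcal{D}))\simeq\omega_{\mathcal{Y}/C'}^{[\lambda']}(\lambda'\mathcal{D})$. Since $\lambda'\mathcal{D}$ is integral, every fibre of $\pi'$ is reduced, and $\pi'$ is locally stable, the theory of locally stable families (see, e.g., \cite{Kollar21}) shows that $\mathcal{L}$ is flat over $C'$, that its formation commutes with base change, and that $\mathcal{L}|_{\mathcal{Y}_{c'}}\simeq\mathcal{O}_{\mathcal{Y}_{c'}}(\lambda'(K_{\mathcal{Y}_{c'}}+\mathcal{D}_{c'}))$ for every $c'\in C'$. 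Hence $\pi'_*\mathcal{L}$ is a torsion-free, thus locally free, sheaf on the smooth curve $C'$, whose fibre at $c'$ is $H^0(\mathcal{Y}_{c'},\mathcal{O}_{\mathcal{Y}_{c'}}(\lambda'(K_{\mathcal{Y}_{c'}}+\mathcal{D}_{c'})))$. By the previous step and the connectedness of $\mathcal{Y}_{c_0'}$, this fibre equals $\kk$ at $c_0'$, so $\pi'_*\mathcal{L}$ has rank $1$ and $H^0(\mathcal{Y}_{c'},\mathcal{O}_{\mathcal{Y}_{c'}}(\lambda'(K_{\mathcal{Y}_{c'}}+\mathcal{D}_{c'})))\neq0$ for every $c'\in C'$. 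Since $\lambda'(K_{\mathcal{Y}_{c'}}+\mathcal{D}_{c'})$ is an integral, numerically trivial, $\qq$-Cartier divisor, Lemma~\ref{lemma_Weil_torsion} (applied on the normalisation of $\mathcal{Y}_{c'}$ if needed) forces $\lambda'(K_{\mathcal{Y}_{c'}}+\mathcal{D}_{c'})\sim0$, and pulling the trivialising section back through $(\mathcal{Y}_{c'},\mathcal{D}_{c'})\simeq(\mathcal{X}_c,\mathcal{B}_c)$ gives $\lambda'(K_{\mathcal{X}_c}+\mathcal{B}_c)\sim0$ for every $c\in U$. I expect the main obstacle to be precisely this last step: verifying the flatness and base-change statement for $\omega_{\mathcal{Y}/C'}^{[\lambda']}(\lambda'\mathcal{D})$ --- which is exactly why the reduction in Lemma~\ref{GS_lemma} to a reduced special fibre and to an integral multiple $\lambda'\mathcal{D}$ is needed --- together with the bookkeeping that places all fibres isomorphic to the $(\mathcal{X}_c,\mathcal{B}_c)$, $c\in U$, inside the connected locally stable locus of $C'$.
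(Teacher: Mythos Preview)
Your strategy matches the paper's: reduce via Lemma~\ref{GS_lemma}, apply Corollary~\ref{introthm:slc} on the special fibre, then propagate through the flat family of divisorial sheaves and conclude by Lemma~\ref{lemma_Weil_torsion}. The gap is exactly where you predicted it would be.

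You assert that the fibre of $\pi'_*\mathcal{L}$ at $c'$ equals $H^0(\mathcal{Y}_{c'},\mathcal{L}_{c'})$ and then read off rank $1$ from the value at $c_0'$. But that identification is cohomology and base change for the \emph{pushforward}, which is not the same as the base-change compatibility of $\mathcal{L}$ itself that locally stable families guarantee. What you actually have is the injection $(\pi'_*\mathcal{L})\otimes k(c')\hookrightarrow H^0(\mathcal{Y}_{c'},\mathcal{L}_{c'})$; together with $h^0(\mathcal{L}_{c_0'})=1$ this gives only $\rank\pi'_*\mathcal{L}\le 1$, and upper semicontinuity of $h^0$ goes the wrong way to exclude rank $0$ (nothing you wrote rules out $\pi'_*\mathcal{L}=0$). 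The paper inserts an extra step to close this: since $\lambda'(K_{\mathcal{Y}_{c_0'}}+\mathcal{D}_{c_0'})$ is Cartier on the fibre and $\mathcal{Y}_{c_0'}$ is a Cartier divisor in $\mathcal{Y}$, Nakayama's lemma makes $\lambda'(K_{\mathcal{Y}}+\mathcal{D})$ an honest line bundle in a neighbourhood of the special fibre; then $\mathcal{L}$ and $\mathcal{L}^{-1}$ are both relatively semi-ample there, and \cite[Corollary~2.65]{Kollar21} gives $h^0(\mathcal{L}_{c'})=1$ on an open neighbourhood of $c_0'$, in particular at the generic point. Only after that does upper semicontinuity for flat families of divisorial sheaves \cite[Theorem~3.32]{Kollar21} propagate $h^0\ge 1$ to all of $C'$, and the argument finishes as you wrote. (Incidentally, Lemma~\ref{lem:weil-index-under-adjunction} is not needed: since $\mathcal{Y}_{c_0'}$ is Cartier in the normal variety $\mathcal{Y}$, the different vanishes and the Weil index of $(\mathcal{Y}_{c_0'},\mathcal{D}_{c_0'})$ already divides $\lambda$.)
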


\begin{proof}
By shrinking $C$, we may assume that the open subset in the statement satisfies $C \setminus \{c_0\} \subset U$.
Then, by Lemma \ref{GS_lemma}, we can suppose that $\pi$ is locally stable, i.e., $U=C$.
In particular, $\K \mathcal{X}.+\mathcal{B}$ is $\qq$-Cartier.
By Corollary~\ref{introthm:slc}, we have $\lambda'(\K \mathcal{X}_{c_0}.+\mathcal{B}_{c_0}) \sim 0$.
In particular, $\lambda'(\K \mathcal{X}_{c_0}.+\mathcal{B}_{c_0})$ is Cartier.
By \cite[Proposition 2.79]{Kollar21}, we have $\mathcal{O}_{\mathcal{X}}(\lambda'(\K \mathcal{X}/C.+ \mathcal{B}))|_{\mathcal{X}_{c_0}} \simeq \mathcal{O}_{\mathcal{X}_{c_0}}(\lambda'(\K \mathcal{X}_{c_0}.+ \mathcal{B}_{c_0}))$.
Since $\mathcal{X}_{c_0}$ is a Cartier divisor in $\mathcal{X}$, by Nakayama's lemma, it follows that $\lambda'( \K \mathcal{X}.+\mathcal{B})$ is Cartier in a neighborhood of $\mathcal{X}_{c_0}$.
By the Calabi--Yau condition in Definition \ref{defn:minimalmaximal}, $\mathcal{O}_{\mathcal{X}}(\pm \lambda' (\K \mathcal{X}.+\mathcal{B}))$ is a relatively semi-ample line bundle over a neighborhood of $c_0$.
Then, by \cite[Corollary 2.65]{Kollar21}, $h^0(\mathcal{X}_c,\mathcal{O}_{\mathcal{X}_c}(\lambda'(\K \mathcal{X}_c.+\mathcal{B}_c)))=1$ for $c$ in a neighborhood of $c_0$.
Henceforth, by Lemma \ref{lemma_Weil_torsion}, we conclude that $\lambda'(\K \mathcal{X}_c.+\mathcal{B}_c) \sim 0$ for $c$ in a neighborhood of $c_0$.

Now, since $\pi$ is a locally stable family and $\lambda \mathcal{B}$ is integral, $\omega_{\mathcal{X}/C}^{[\lambda']}(\lambda'\mathcal{B})$ is a flat family of divisorial sheaves, see \cite[Proposition 2.79.3 and \S 3.3]{Kollar21}.
Then, by \cite[Theorem 3.32]{Kollar21},
$h^0(\mathcal{X}_c,\mathcal{O}_{\mathcal{X}_c}(\lambda'(\K \mathcal{X}_c.+\mathcal{B}_c))) \geq 1$ for all $c \in C$.
Since $\K \mathcal{X}_c.+\mathcal{B}_c$ is torsion for all $c \in C$, we have that $h^0(\mathcal{X}_c,\mathcal{O}_{\mathcal{X}_c}(\lambda'(\K \mathcal{X}_c.+\mathcal{B}_c))) = 1$ for all $c \in C$.
By Lemma \ref{lemma_Weil_torsion}, this concludes the proof.
\end{proof}

\begin{corollary}\label{cor:DMRorientable}
Let $\pi \colon \mathcal{X} \rar C$ be a locally stable minimal family of Calabi--Yau varieties of coregularity $0$ over $c_0 \in C$. Suppose that the general fiber $\mathcal{X}_{c}$ is klt. 
If $K_{\mathcal{X}_c} \not\sim 0$, then $\mathcal{DMR}(\mathcal{X}, \mathcal{X}_{c_0})$ is not orientable.
\end{corollary}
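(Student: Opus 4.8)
The plan is to deduce this from Corollary~\ref{cor:nonorientamirror} after passing to a dlt modification of the total space, so that the dual complex $\mathcal{DMR}(\mathcal{X},\mathcal{X}_{c_0})$ becomes the honest dual complex of a reduced, dlt special fiber. First I would shrink $C$ around $c_0$ so that $c_0$ is a principal divisor on $C$ and so that every fiber other than $\mathcal{X}_{c_0}$ is klt (possible since the klt fibers form a dense open of $C$). Then $K_{\mathcal{X}}+\mathcal{X}_{c_0}\sim_{\qq,C}0$ upgrades to $K_{\mathcal{X}}+\mathcal{X}_{c_0}\sim_{\qq}0$; since $(\mathcal{X},\mathcal{X}_{c_0,\mathrm{red}})$ is log canonical of coregularity $0$ by Definition~\ref{defn:minimalmaximal} and $\mathcal{X}_{c_0}=\mathcal{X}_{c_0,\mathrm{red}}$ by local stability, $(\mathcal{X},\mathcal{X}_{c_0})$ is a projective log Calabi--Yau pair of coregularity $0$, and in particular $\mathcal{DMR}(\mathcal{X},\mathcal{X}_{c_0})$ is defined.

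Next I would take a $\qq$-factorial dlt modification $g\colon(\mathcal{Y},\mathcal{D}_{\mathcal{Y}})\to(\mathcal{X},\mathcal{X}_{c_0})$, chosen to be an isomorphism over the locus where $(\mathcal{X},\mathcal{X}_{c_0})$ is already dlt; by the previous shrinking this locus contains $C\setminus\{c_0\}$, since near a general (hence klt) fiber the pair is just $(\mathcal{X},0)$, which is klt there. The crucial point is that every $g$-exceptional divisor is vertical over $C$: a prime divisor over $\mathcal{X}$ dominating $C$ has center meeting a klt fiber, so it has positive log discrepancy with respect to $(\mathcal{X},0)=(\mathcal{X},\mathcal{X}_{c_0})$ in a neighborhood of that fiber, and therefore cannot be a log canonical place extracted by $g$. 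Consequently $\mathcal{D}_{\mathcal{Y}}$ — the strict transform of $\mathcal{X}_{c_0}$ together with the reduced $g$-exceptional locus — equals $\mathcal{Y}_{c_0,\mathrm{red}}$ for $\pi'\coloneqq\pi\circ g\colon\mathcal{Y}\to C$, and $K_{\mathcal{Y}}+\mathcal{Y}_{c_0,\mathrm{red}}=g^{*}(K_{\mathcal{X}}+\mathcal{X}_{c_0})\sim_{\qq,C}0$. Thus $\pi'$ is again a minimal family of Calabi--Yau varieties of coregularity $0$ over $c_0$ (flatness, $\pi'_{*}\mathcal{O}_{\mathcal{Y}}=\mathcal{O}_{C}$, and the coregularity-$0$ condition all transfer, the last because coregularity is a crepant birational invariant), now with $(\mathcal{Y},\mathcal{Y}_{c_0,\mathrm{red}})$ dlt, with general fiber $\mathcal{Y}_c\simeq\mathcal{X}_c$, and with $\mathcal{D}(\mathcal{Y}_{c_0,\mathrm{red}})\simeq_{\mathrm{PL}}\mathcal{DMR}(\mathcal{X},\mathcal{X}_{c_0})$ by Definition~\ref{rmk:dualcomplex}.

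Now Corollary~\ref{cor:nonorientamirror} applies to $\pi'$ and yields an isomorphism $H^{n}(\mathcal{DMR}(\mathcal{X},\mathcal{X}_{c_0}),\kk)\simeq H^{0}(\mathcal{X}_c,\mathcal{O}_{\mathcal{X}_c}(K_{\mathcal{X}_c}))$. If $K_{\mathcal{X}_c}\not\sim 0$, then, since $\mathcal{X}_c$ is klt with $K_{\mathcal{X}_c}\sim_{\qq}0$ (restrict $K_{\mathcal{X}}+\mathcal{X}_{c_0}\sim_{\qq}0$ to the Cartier divisor $\mathcal{X}_c$), Lemma~\ref{lemma_Weil_torsion} forces $H^{0}(\mathcal{X}_c,\mathcal{O}_{\mathcal{X}_c}(K_{\mathcal{X}_c}))=0$, whence $H^{n}(\mathcal{DMR}(\mathcal{X},\mathcal{X}_{c_0}),\kk)=0$. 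Since $\mathcal{DMR}(\mathcal{X},\mathcal{X}_{c_0})=\mathcal{D}(\mathcal{D}_{\mathcal{Y}})$ is a closed pseudo-manifold of dimension $n$ (as used in the proof of Proposition~\ref{lem:pseudomflddualcomplex}), this gives $H^{n}(\mathcal{DMR}(\mathcal{X},\mathcal{X}_{c_0}),\zz)\neq\zz$, i.e.\ $\mathcal{DMR}(\mathcal{X},\mathcal{X}_{c_0})$ is not orientable in the sense of Definition~\ref{defn:orient}.

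The step I expect to require the most care is the verticality of the $g$-exceptional divisors and the resulting identity $\mathcal{D}_{\mathcal{Y}}=\mathcal{Y}_{c_0,\mathrm{red}}$, since it is exactly this that lets $(\mathcal{Y},\mathcal{D}_{\mathcal{Y}})$ be read as the reduced special fiber of a minimal family in the sense of Definition~\ref{defn:minimalmaximal}; it relies on the klt hypothesis on the general fiber together with the openness of the klt locus over $C$. If arranging $g$ to be an isomorphism over $C\setminus\{c_0\}$ turned out to be awkward, one can instead simply observe that $\mathcal{Y}_c$ is a klt Calabi--Yau variety crepant birational to $\mathcal{X}_c$ and so has the same space of sections of its canonical sheaf, which is all that is used above.
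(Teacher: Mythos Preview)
Your argument is circular within the paper's logic: Corollary~\ref{cor:nonorientamirror} has no independent proof here --- it is established precisely as the special case of Corollary~\ref{cor:DMRorientable} in which $(\mathcal{X},\mathcal{X}_{c_0})$ is already dlt. So invoking it to prove Corollary~\ref{cor:DMRorientable} assumes the result. Your reduction to a dlt model $(\mathcal{Y},\mathcal{Y}_{c_0,\mathrm{red}})$ is reasonable and is also the paper's opening move, but the substantive content --- the chain
\[
H^{n}(\mathcal{D}(\mathcal{Y}_{c_0,\mathrm{red}}),\kk)\simeq H^{n}(\mathcal{Y}_{c_0,\mathrm{red}},\mathcal{O}_{\mathcal{Y}_{c_0,\mathrm{red}}})\simeq H^{n}(\mathcal{X}_{c_0},\mathcal{O}_{\mathcal{X}_{c_0}})\simeq H^{n}(\mathcal{X}_{c},\mathcal{O}_{\mathcal{X}_{c}})\simeq H^{0}(\mathcal{X}_{c},\mathcal{O}_{\mathcal{X}_{c}}(K_{\mathcal{X}_c}))
\]
obtained from the dual-complex/structure-sheaf comparison, the du Bois property of the fibers together with birational invariance, constancy of $h^n(\mathcal{X}_c,\mathcal{O}_{\mathcal{X}_c})$ in the locally stable family, and Serre duality on the Cohen--Macaulay klt fiber --- is entirely absent from your proposal. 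That chain \emph{is} the proof; the passage to a dlt model is only bookkeeping.

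A secondary issue: even if Corollary~\ref{cor:nonorientamirror} were available, its hypothesis asks that $(\mathcal{Y},\mathcal{Y}_{c_0})$ be dlt, which forces the scheme-theoretic fiber $\mathcal{Y}_{c_0}$ to be reduced. A dlt modification only guarantees that $(\mathcal{Y},\mathcal{Y}_{c_0,\mathrm{red}})$ is dlt; the exceptional divisors can appear in $\mathcal{Y}_{c_0}=g^{*}\mathcal{X}_{c_0}$ with multiplicity greater than $1$, so in general $\mathcal{Y}_{c_0}\neq\mathcal{Y}_{c_0,\mathrm{red}}$ and $\pi'$ does not literally meet the hypothesis of Corollary~\ref{cor:nonorientamirror}. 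The paper avoids this by working directly with $\mathcal{Y}_{c_0,\mathrm{red}}$ in the cohomological chain above rather than by citing Corollary~\ref{cor:nonorientamirror}.
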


\begin{proof} 
Up to a small $\qq$-factorial modification of $\mathcal{X}$, we can suppose that $\mathcal{X}$ has only klt $\qq$-factorial singularities.
Take a dlt modification $(\mathcal{Y}, \mathcal{Y}_{c_0, \mathrm{red}})$ of $(\mathcal{X}, \mathcal{X}_{c_0})$ as constructed in \cite[\S 1.35]{Kol13}.
In particular, observe that $\mathcal{Y} \rar \mathcal{X}$ is an isomorphism over the open set $\mathcal{X} \setminus \mathcal{X}_{c_0}$ as $\mathcal{X}$ has only klt $\qq$-factorial singularities.

By definition of dual complex of a log Calabi--Yau pair (cf Definition \ref{rmk:dualcomplex}), we write
\[\mathcal{DMR}(\mathcal{X}, \mathcal{X}_{c_0}) \simeq_{\mathrm{PL}} \mathcal{D}(\mathcal{Y}_{c_0, \mathrm{red}}).\]
Since $\pi \colon \mathcal{X} \to C$ has coregularity 0 over $c_0$, there exists an isomorphism
\[H^{n}(\mathcal{D}(\mathcal{Y}_{c_0, \mathrm{red}}), \kk) \simeq H^{n}(\mathcal{Y}_{c_0, \mathrm{red}}, \mathcal{O}_{\mathcal{Y}_{c_0, \mathrm{red}}})\]
with $n=\dim \mathcal{Y}_{c_0, \mathrm{red}}$, see \cite[Lemma 25]{KX16} and \cite[Proposition A.7]{KLSV2018}.
Recall that the fibers of $\pi$ have du Bois singularities, and $\mathcal{Y} \rar \mathcal{X}$ is an isomorphism over the open set $\mathcal{X} \setminus \mathcal{X}_{c_0}$. Then, \cite[Theorem 4.3]{Schwede2007} gives the following isomorphism:
\[
H^{n}(\mathcal{Y}_{c_0, \mathrm{red}}, \mathcal{O}_{\mathcal{Y}_{c_0, \mathrm{red}}}) \simeq H^{n}(\mathcal{X}_{c_0}, \mathcal{O}_{\mathcal{X}_{c_0}}).
\]
By \cite[Theorem 2.62]{Kollar21}, we also have
 \[ H^{n}(\mathcal{X}_{c_0}, \mathcal{O}_{\mathcal{X}_{c_0}}) \simeq H^{n}(\mathcal{X}_{c}, \mathcal{O}_{\mathcal{X}_{c}}).\]
Now, since the general fiber $\mathcal{X}_{c}$ is Cohen--Macaulay, 
by duality we obtain
\[H^{n}(\mathcal{X}_{c}, \mathcal{O}_{\mathcal{X}_{c}}) \simeq H^{0}(\mathcal{X}_{c}, \mathcal{O}_{\mathcal{X}_{c}}(K_{\mathcal{X}_{c}})).\]
Finally, we conclude
\[H^n(\mathcal{DMR}(\mathcal{X}, \mathcal{X}_{c_0})) \simeq H^{0}(\mathcal{X}_{c}, \mathcal{O}_{\mathcal{X}_{c}}(K_{\mathcal{X}_{c}})).\]
If $K_{\mathcal{X}_{c}} \not \sim 0$, then $H^{n}(\mathcal{DMR}(\mathcal{X}, \mathcal{X}_{c_0}), \kk)=0$, i.e., $\mathcal{DMR}(\mathcal{X}, \mathcal{X}_{c_0})$ is not orientable. 
\end{proof}

\begin{proof}[Proof of Corollary \ref{cor:nonorientamirror}]
Corollary \ref{cor:nonorientamirror} is a special case of  Corollary \ref{cor:DMRorientable}.
\end{proof}
See Example \ref{ex:quotientfamily} for an instance of Corollary \ref{cor:nonorientamirror}.

\begin{proof}[Proof of Theorem~\ref{introcor-GS2}] 
Up to shrinking $C$, we can assume that $\pi$ is locally stable. 
By the same proof of Theorem~\ref{introcor-GS}, it suffices to show that $\K \mathcal{X}_{c_0}. \sim 0 $, or $ \K \mathcal{X}. + \mathcal{X}_{c_0} \sim 0$ in a neighborhood of $\mathcal{X}_{c_0}$. 

First, we show that
the restriction of
$\K \mathcal{X}. + \mathcal{X}_{c_0}$
to each component $\Delta_i$ of the central fiber
is linearly trivial.
To this end, note that the $\qq$-divisor
\[
\K \Delta_i. + \Diff_{\Delta_i}(\mathcal{X}_{c_0} - \Delta_i) \sim_{\qq} (\K \mathcal{X}. + \mathcal{X}_{c_0})|_{\Delta_i} \sim_{\qq} 0
\]
is actually a linearly trivial Cartier divisor, since the inequality
\[
0 \sim_{\qq} \K \Delta_i. + \Diff_{\Delta_i}(\mathcal{X}_{c_0}-\Delta_i) \geq \K \Delta_i. + \sum_{j \neq i} \Delta_j|_{\Delta_i} \sim 0
\]
implies that $\K \Delta_i. + \Diff_{\Delta_i}(\mathcal{X}_{c_0}-\Delta_i) = \K \Delta_i. + \sum_{j \neq i} \Delta_j$, and $\K \Delta_i. + \sum_{j \neq i} \Delta_j \sim 0$ as for any toric pair.

Let $m$ be the smallest positive integer for which
$m(K_{\mathcal{X}}+\mathcal{X}_{c_0})\sim 0$
holds, up to eventually shrinking $C$.
Take the degree $m$  quasi-\'{e}tale
cover $q \colon \mathcal{X}'\rightarrow \mathcal{X}$ associated to $m(K_{\mathcal{X}}+\mathcal{X}_{c_0})\sim 0$. 
By \cite[(4.2.7)]{Kol13}, there exists an open subset $\Delta^{\circ}_{i} \subseteq \Delta_i$, with $\codim_{\Delta_i}(\Delta_i \setminus \Delta^{\circ}_{i})\geq 2$, such that for any $k>0$ \[\omega^{[k]}_{\mathcal{X}}(k\mathcal{X}_{c_0})|_{\Delta^{\circ}_i} \simeq \omega^{[k]}_{\mathcal{X}_{c_0}}(k\Diff_{\Delta_i}(\mathcal{X}_{c_0} -\Delta_i))|_{\Delta^{\circ}_i}\sim 0.\] This implies that $q^{-1}(\Delta^{\circ}_i)$
consists of $m$ disjoint copies of $\Delta^{\circ}_i$.
By the smoothness of $\Delta_i$ and purity of the branch locus, or by \cite[Lemma 2.93]{Kollar21}, the same holds for $q^{-1}(\Delta_i)$.
Therefore, the induced morphism
$
\mathcal{X}'_{c_0} \rightarrow \mathcal{X}_{c_0}
$ 
is a finite \'etale cover of degree $m$. 
Since every irreducible component $\Delta_i$
of $\mathcal{X}_{c_0}$ is simply connected, 
it follows that
\[
\pi^{\text{ét}}_1(\mathcal{X}_{c_0}) \simeq 
\pi_1(\mathcal{D}(\mathcal{X}_{c_0})) \simeq \{1\}
\] 
by \cite[Lemma 26]{KX16} and by the hypothesis on the simple connectedness of $\mathcal{D}(\mathcal{X}_{c_0})$.
This implies that $m=1$ and finishes the proof.
\end{proof}

\begin{remark}
In Theorem~\ref{introcor-GS2}, we require that the special fiber $\mathcal{X}_{c_0} = \sum \Delta_i$ is toric, but we could weaken it by requiring that all the log Calabi--Yau pairs $(\Delta_i, \Diff_{\Delta_i}(\mathcal{X}_{c_0} - \Delta_i))$ have index 1. By Theorem~\ref{introcor-GS}, the cover $q \colon \mathcal{X}'\rightarrow \mathcal{X}$ in the proof of Theorem~\ref{introcor-GS2} has actually degree at most 2.
Therefore, one could also replace the assumption on the simple connectedness of the dual complex with the weaker condition that $\pi_1(\mathcal{D}(\mathcal{X}_{c_0}))$ has no index $2$ subgroups.
\end{remark}

\section{Quotients of Calabi--Yau and holomorphic symplectic varieties}\label{sec:quotients}

As a consequence of Theorem \ref{introcor-GS}, we obtain the following bound on the index of quotients of log Calabi--Yau varieties.

\begin{theorem}\label{thm_group_actions_general}
Let
\begin{itemize}
    \item $(X, \Delta_X)$ be a projective log Calabi--Yau pair with $K_X + \Delta_X \sim_\qq 0$ where $\Delta_X = \sum_i (1 - \frac{1}{m_i})\Delta_i$ has standard coefficients;
    \item $G$ be a finite subggroup of $\mathrm{Aut}(X, \Delta_X)$, i.e., the group of automorphisms $g \colon X \to X$ such that $g^*\Delta_X=\Delta_X$; and
    \item $(Y, \Delta_Y)$ be the pair given by the quotient $Y \coloneqq X/G$ and the boundary $\Delta_Y$ induced from $(X, \Delta_X)$ by the Riemann--Hurwitz formula as in \cite[(2.41.6)]{Kol13}.
\end{itemize}
Assume that $(Y, \Delta_Y)$ is a fiber of a minimal family $\pi \colon (\mathcal{Y},\mathcal{B}) \rar C$ of Calabi--Yau pairs of coregularity $0$ over $c_0 \in C$ such that $\coeff(\mathcal{B})=\coeff(\Delta_Y)$.
Then
the index of $(Y, \Delta_Y)$ is at most $2$, or equivalently the order of the character $\rho \colon G \to \mathrm{GL}(H^0(X,\O X.(K_X + \Delta_X)))$ is at most $2$.
\end{theorem}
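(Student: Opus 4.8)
The plan is to prove the statement about the character $\rho$; the bound on the index of $(Y,\Delta_Y)$ will fall out of the same argument. First I would set up the quotient. The map $f\colon X\to Y=X/G$ is a finite Galois morphism with Galois group $G$, and by the Riemann--Hurwitz formula \cite[(2.41.6)]{Kol13} it is crepant, $K_X+\Delta_X=f^*(K_Y+\Delta_Y)$. Applying the projection formula $f_*f^*=(\deg f)$ to $\qq$-Cartier classes gives $K_Y+\Delta_Y\sim_\qq 0$, so $(Y,\Delta_Y)$ is again log Calabi--Yau, and it is log canonical because $(X,\Delta_X)$ is and $f$ is crepant. As $(Y,\Delta_Y)$ is a fiber of a minimal family of Calabi--Yau pairs of coregularity $0$ it has coregularity $0$, hence so does $(X,\Delta_X)$ by Proposition~\ref{prop_quotient_coreg}. (This is the only role of the family hypothesis in the main case; it reappears in the degenerate case below.)

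Next I would make a reduction. If $H^0(X,\O X.(K_X+\Delta_X))=0$ then $\rho$ is trivial and there is nothing to prove, so assume this group is non-zero. Since $\O X.(K_X+\Delta_X)=\O X.(K_X+\Delta_X^{=1})$ and $K_X+\Delta_X^{=1}\sim_\qq -\{\Delta_X\}$ is anti-effective, a non-zero section produces an effective divisor $E$ with $\{\Delta_X\}+E\sim_\qq 0$; as $X$ is projective, both summands vanish. Thus $\Delta_X=\Delta_X^{=1}$ is reduced, $K_X+\Delta_X\sim 0$, $H^0(X,\O X.(K_X+\Delta_X))\simeq\kk$, and the Weil index of $(X,\Delta_X)$ is $1$.

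The main step is then short, since the work is already done in Proposition~\ref{dlt:reduced} and Theorem~\ref{introthm:admissible}. By Proposition~\ref{dlt:reduced}, $2(K_X+\Delta_X)\sim 0$, so $H^0(X,\O X.(2(K_X+\Delta_X)))\simeq\kk$. Applying Theorem~\ref{introthm:admissible} to $(X,\Delta_X)$ — whose Weil index is $1$, so $\lambda'=\lcm(1,2)=2$ — we get a non-trivial section $s\in H^0(X,\O X.(2(K_X+\Delta_X)))$ fixed by every element of ${\rm Bir}(X,\Delta_X)$. Since $G\subseteq\Aut(X,\Delta_X)\subseteq{\rm Bir}(X,\Delta_X)$ and this $H^0$ is one-dimensional, $G$ acts trivially on it; writing $t$ for a trivializing section of $K_X+\Delta_X$, its square $t^{\otimes 2}$ spans $H^0(X,\O X.(2(K_X+\Delta_X)))$ and $g^*(t^{\otimes 2})=\rho(g)^2\,t^{\otimes 2}$, so $\rho(g)^2=1$ for all $g\in G$ and the order of $\rho$ is at most $2$. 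One can see the $G$-invariance of $t^{\otimes 2}$ directly, without invoking Theorem~\ref{introthm:admissible}: the Poincar\'e residue map $\mathcal{R}^2$ is $G$-equivariant by Lemma~\ref{lem:residueequivariant}, its image is the diagonal line in $\bigoplus_i\kk$ by Lemma~\ref{computationresidue} (which $G$ fixes, since it permutes the summands), and $\mathcal{R}^2$ is injective because $(X,\Delta_X)$ is log Calabi--Yau.

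Finally I would record the equivalence with the index bound and dispose of the degenerate case. In the reduced situation $\Delta_Y$ is reduced and $2(K_Y+\Delta_Y)$ is an integral $\qq$-Cartier divisor ($Y$ has quotient, hence $\qq$-factorial, singularities) with $2(K_Y+\Delta_Y)\sim_\qq 0$; since $\O X.(2(K_X+\Delta_X))=f^*\O Y.(2(K_Y+\Delta_Y))$ and $G$ acts trivially on the non-zero $H^0$ of the left-hand side, we obtain $H^0(Y,\O Y.(2(K_Y+\Delta_Y)))\neq 0$, whence $2(K_Y+\Delta_Y)\sim 0$ by Lemma~\ref{lemma_Weil_torsion}; that is, the index of $(Y,\Delta_Y)$ is at most $2$. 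Conversely, a trivialization of $2(K_Y+\Delta_Y)$ pulls back to a $G$-invariant trivialization of $2(K_X+\Delta_X)$, forcing $\rho^2=1$. In the remaining case, where $\Delta_X$ (hence $\Delta_Y$) is not reduced, $\rho$ is trivial, and the index bound $\lambda'(K_Y+\Delta_Y)\sim 0$ — with $\lambda$ the Weil index of $(Y,\Delta_Y)$, equal to that of $\mathcal{B}$ since $\coeff(\mathcal{B})=\coeff(\Delta_Y)$, and $\lambda'=\lcm(\lambda,2)$ — is Theorem~\ref{introcor-GS-pairs} applied to $\pi$, after the base change of Lemma~\ref{GS_lemma}. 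The delicate points are this reduction (non-triviality of $\rho$ forces $\Delta_X$ reduced, which is what makes Proposition~\ref{dlt:reduced} and the residue machinery applicable) and the bookkeeping for the descent of $G$-invariant sections to the $\qq$-factorial quotient $Y$; the real content sits entirely in Proposition~\ref{dlt:reduced} and Theorem~\ref{introthm:admissible}.
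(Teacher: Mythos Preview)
There is a central gap: you assert that ``$(Y,\Delta_Y)$ is a fiber of a minimal family of Calabi--Yau pairs of coregularity $0$, hence it has coregularity $0$''. That is not what the hypothesis says. By Definition~\ref{defn:minimalmaximal}, coregularity $0$ over $c_0$ means that the \emph{total space} pair $(\mathcal{Y},\mathcal{B}+\mathcal{Y}_{c_0,\mathrm{red}})$ has coregularity $0$ in a neighbourhood of $c_0$; it says nothing about the coregularity of an arbitrary fiber $(\mathcal{Y}_c,\mathcal{B}_c)=(Y,\Delta_Y)$. In a Type~III degeneration of K3 surfaces, for instance, the family has coregularity $0$ over $c_0$ while every smooth fiber has empty boundary and coregularity $2$. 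Consequently you cannot deduce that $(X,\Delta_X)$ has coregularity $0$, and your appeals to Proposition~\ref{dlt:reduced}, Theorem~\ref{introthm:admissible}, and the residue argument (Lemmas~\ref{lem:residueequivariant} and~\ref{computationresidue}, which presuppose $0$-dimensional log canonical centers) are all unjustified.

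Two smaller problems follow from the same misreading. First, even when $\Delta_X$ is reduced, $\Delta_Y$ need not be: the Riemann--Hurwitz boundary acquires standard coefficients $1-\tfrac{1}{e}$ along the branch divisor of $f$, so your claim that $2(K_Y+\Delta_Y)$ is integral needs justification. Second, in your ``degenerate'' case you only get index $\leq\lambda'=\lcm(\lambda,2)$ rather than $\leq 2$. The paper closes all of these at once by working on the family rather than on $(X,\Delta_X)$: after a base change one passes to the \emph{special} fiber $(\mathcal{Y}_{c_0},\mathcal{B}_{c_0})$, which genuinely has coregularity $0$; Lemma~\ref{lem:producing-lcy-p1} (equivalently Corollary~\ref{cor:frac1/2}) applied there forces $\coeff(\mathcal{B})\subset\{\tfrac{1}{2},1\}$, hence the Weil index of $\mathcal{B}$ divides $2$; Theorem~\ref{introcor-GS-pairs} then yields $2(K_Y+\Delta_Y)\sim 0$ for every locally stable fiber, and the bound on $\rho$ follows by pull-back. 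The coregularity-$0$ machinery has to be deployed on the degenerate fiber, not on $(Y,\Delta_Y)$ or $(X,\Delta_X)$ themselves.
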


Note that the condition $\coeff(\mathcal{B})=\coeff(\Delta_Y)$ is satisfied by a general fiber of the family $\pi$.

\begin{proof} Assume that $(Y, \Delta_Y)$ admits a degeneration $\pi \colon (\mathcal{Y},\mathcal{B}) \rar C$ as in the statement.
Up to a finite base change ramified at $c_0$, by Proposition \ref{prop_quotient_coreg}, we may further assume that $\mathcal{Y}_{c_0}$ is reduced and hence $\pi \colon (\mathcal{Y},\mathcal{B}) \rar C$ is a locally stable family near $c_0$.
Then, by adjunction, $(\mathcal{Y}_{c_0},\mathcal{B}_{c_0})$ is a semi-log canonical Calabi--Yau pair of coregularity 0.

By the Riemann--Hurwitz formula, the coefficients of $\Delta_Y$ are standard, see \cite[(2.41.6)]{Kol13}.
Then, by hypothesis, the coefficients of $\mathcal{B}$ are standard as well.
Since standard coefficients are greater than or equal to $\frac{1}{2}$, components of $\mathcal{B}$ that intersect along a divisor $D_{c_0}$ of $\mathcal{Y}_{c_0}$ must have coefficient $\frac{1}{2}$, so that $D_{c_0}$ appears with coefficient 1 in $\mathcal{B}_{c_0}$.
In particular, also $\mathcal{B}_{c_0}$ has standard coefficients.

Then, by applying Lemma~\ref{lem:producing-lcy-p1} to the normalization of $(\mathcal{Y}_{c_0},\mathcal{B}_{c_0})$, it follows that the coefficients of $\mathcal{B}_{c_0}$, and hence of $\mathcal{B}$, are in $\lbrace \frac{1}{2},1\rbrace$.
Therefore, by Theorem~\ref{introcor-GS-pairs}, we have that $2(K_{Y}+\Delta_Y) \sim 0$.
Since the sections of $\O Y.(2(\K Y.+\Delta_Y))$ correspond to $G$-invariant sections of $\O X.(2(\K X.+\Delta_X))$, it follows that $\rho^{2}$ is the trivial character.
\end{proof}

We now apply Theorem \ref{thm_group_actions_general} to degeneration of holomorphic symplectic varieties.

\begin{definition}\label{def:holosympl}
A normal proper variety $X$ with canonical (Gorenstein) singularities is:
\begin{enumerate}
    \item \emph{holomorphic symplectic} if it admits a nondegenerate closed holomorphic 2-form $\omega_X \in H^0(X^{\mathrm{reg}}, \Omega^2_{X^{\mathrm{reg}}})$ on its regular locus;
    \item \emph{primitive symplectic} if it is holomorphic symplectic, $H^1(X, \mathcal{O}_X)=0$ and $H^0(X, \Omega^{[2]}_X)$ is generated by $\omega_X$. 
\end{enumerate} 
\end{definition}
\begin{definition}
An automorphism $g \colon X \to X$ of a holomorphic symplectic variety $X$ is \emph{non-symplectic} if $g^*\omega_X \neq \omega_X$, and it is \emph{purely non-symplectic} if $(g^{k})^{*}\omega_X \neq \omega_X$ for all powers $g^k \neq 1$, e.g., a non-symplectic automorphism of prime order.
\end{definition}

If $g$ is finite of order $m$,\footnote{Note that if $X$ is a projective primitive symplectic, a purely non-symplectic automorphism is automatically finite by the same proof of \cite[Corollary 3.4.]{Huybrehcts2016}.} any non-symplectic automorphism descends to a purely non-symplectic automorphism on the quotient $X/g^{k_0}$, where $g^{k_0}$ is the minimal symplectic power of $g$.

Let $(C, c_0)$ denote the germ of a smooth curve at a point $c_0 \in C$ and let $C^* \coloneqq C \setminus \{c_0\}$.
Let $\pi^* \colon X^{*} \to C^*$ be a projective family of holomorphic symplectic varieties.

\begin{definition}
The degeneration $\pi^*$ is \emph{of type III} if, up to finite base change, $\pi^*$ extends to a minimal family $\pi \colon \mathcal{X} \to C$ of Calabi--Yau varieties of coregularity $0$ over $c_0$.
\end{definition}

See \cite[Theorem 0.11]{KLSV2018} for a discussion of the other types of degenerations and the relation with the coregularity of the central fiber.

\begin{proof}[Proof of Corollary \ref{thm_hyperkahler}] 
Let $\pi \colon \mathcal{X} \to C$ be an extension of $\pi^* \colon \mathcal{X}^* \to C^*$, projective over $C$. Up to a birational modification, we can suppose that $g \colon \mathcal{X}^* \to \mathcal{X}^*$ extends to a regular morphism on $g \colon \mathcal{X} \to \mathcal{X}$, e.g., take the normalization of the main component of the graph of the rational map $\mathcal{X} \dashrightarrow \prod^{m-1}_{i=1} \mathcal{X}$ sending $x \mapsto (g(x), \ldots, g^{m-1}(x))$, where $m$ is the order of $g$.
The semi-stable reduction theorem for Calabi--Yau varieties in \cite{Fujino2011} works in the equivariant setting too: it suffices to take $g$-equivariant log resolutions \cite[Proposition 3.9.1]{Kol07} and to replace the ordinary MMP with a $g$-equivariant analog. Therefore, we can suppose that $\pi \colon \mathcal{X} \to C$ is a minimal family of Calabi--Yau varieties endowed with an automorphism $g \colon \mathcal{X} \to \mathcal{X}$ acting fiberwise and such that the pair $(\mathcal{X}, \mathcal{X}_{c_0} = \mathcal{X}_{c_0, \mathrm{red}})$ is dlt.
In particular, $g^*(K_{\mathcal{X}}+\mathcal{X}_{c_0})\sim_{\qq} K_{\mathcal{X}}+\mathcal{X}_{c_0} \sim_{\qq, C}0$.

Consider the pair $(\mathcal{Y}, \Delta_{\mathcal{Y}})$ given by the quotient $\mathcal{Y} \coloneqq \mathcal{X}/g$ and the boundary $\Delta_{\mathcal{Y}}$ induced from $(\mathcal{X}, \mathcal{X}_{c_0})$ by the Riemann--Hurwitz formula as in \cite[(2.41.6)]{Kol13}. Let $\mathcal{B}$ be the sum of the components of $\Delta_{\mathcal{Y}}$ dominating $C$. By construction, we have
\[0 \sim_{\qq, C} K_{\mathcal{Y}} + \Delta_{\mathcal{Y}} = K_{\mathcal{Y}} + \mathcal{B} + \mathcal{Y}_{c_0} = K_{\mathcal{Y}} + \mathcal{B} + \mathcal{Y}_{c_0, \mathrm{red}}.\]
Hence, the quotient family $\pi_Y \colon (\mathcal{Y}, \Delta_{\mathcal{Y}}) \to C$ is a minimal family of Calabi--Yau varieties. If $\mathcal{X}^*$ is of type III, then the pair $(\mathcal{X}, \mathcal{X}_{c_0})$ has coregularity $0$, and so the pair $(\mathcal{Y}, \Delta_{\mathcal{Y}})$ has coregularity $0$ too by Proposition \ref{prop_quotient_coreg}.
Let $\omega \in H^0(\mathcal{X}_c, \Omega^{[2]}_{\mathcal{X}_c})$. Since $g$ acts purely non-symplectically on the general fiber $\mathcal{X}_c$, then $g^*\omega = \zeta_{m} \omega$, where $\zeta_{m}$ is a primitive $m$-th root of unity, and so $g^*\omega^{n} = \zeta^{n}_{m} \omega^{n} \in H^0(\mathcal{X}_c, \mathcal{O}_{\mathcal{X}_c}(K_{\mathcal{X}_c}))$. The index of the general fiber $\mathcal{Y}_c$ is equal to the order of $\zeta^{n}_{m}$.
By Theorem \ref{thm_group_actions_general}, we conclude that $m$ divides $\dim \mathcal{X}_c=2n$.
\end{proof}

\begin{remark}\label{rmk:finaldual}
The dual complex $\mathcal{DMR}(\mathcal{Y}, \Delta_{\mathcal{Y}})$ is PL-homeomorphic to the quotient $\mathcal{D}(\mathcal{X}_{c_0})/g$ by Proposition \ref{prop:dualcomplexquotient}. In particular, if $g$ is a non-symplectic involution and $\dim \mathcal{X}_{c} \equiv 2 \mod 4$, then $\mathcal{DMR}(\mathcal{Y}, \Delta_{\mathcal{Y}})$ is not orientable, since the identity \eqref{eq:identidualcomplex} is $g$-equivariant, and $g$ acts non-trivially on $H^0(\mathcal{X}_{c}, \mathcal{O}_{\mathcal{X}_{c}}(K_{\mathcal{X}_{c}}))$.
\end{remark}

\begin{remark}
Let $\mathcal{F}$ be the moduli space of (markable)
primitive symplectic varieties of dimension $n$ with a purely non-symplectic automorphism of order $2$. A consequence of Remark~\ref{rmk_alexeev} is that the Baily--Borel compactification of the period domain of $\mathcal{F}$ does not contain 0-cusps. We refer to \cite[\S 2]{AEC2021} for an explanation of the terminology. In fact, we do not know any place in the literature where the details of the construction of these moduli spaces are carried out for arbitrary primitive symplectic varieties. For that, one can adapt the arguments in \cite[\S 2]{AEC2021} for K3 surfaces and \cite[\S 4]{BCS2016}. This goes beyond the scope of this paper, so we omit the details here. 
\end{remark}

\section{Examples}

In this section, we collect some examples showing that the statement of the theorem is optimal.

\begin{example}[{\cite[Example 60]{KX16}}]\label{ex:RPN}
Set $X_1 \coloneqq \pr 1.$ and $\Delta_1 \coloneqq (0:1)+(1:0)$.
Let $\tau_1$ denote the involution $\tau_1 \colon (x:y) \mapsto (y:x)$.
Then, for every integer $n \geq 2$, we set $(X_n,\Delta_n) \coloneqq (X_1,\Delta_1)^n$, while $\tau_n$ denotes the involution $\tau_n \coloneqq (\tau_1,\ldots ,\tau_1)$ that acts diagonally as $\tau_1$ on each factor.
We define $(Y_n,B_n) \coloneqq (X_n,\Delta_n)/(\tau_n)$.\footnote{For $n=2$, the variety $Y_2$ is a so-called Fano--Enriques threefold, and in characteristic $2$, it is a Fano compactification of a terminal non-Cohen-Macaulay singularity studied in \cite[Theorem 5.1]{Totaro2019}.}

\begin{lemma}
If $n \geq 2$, $(Y_n,B_n)$ is a dlt log Calabi--Yau pair, and $B_{n}$ is a reduced Cartier divisor.
\end{lemma}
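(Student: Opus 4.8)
The plan is to descend the log smooth structure of $(X_n,\Delta_n)$ through the quotient map $\pi\colon X_n\to Y_n$, exploiting the crucial fact that $\tau_n$ fixes no point of $\Delta_n$. Recall that $(X_n,\Delta_n)=((\pr 1.)^n,\text{toric boundary})$ is log smooth and toric log Calabi--Yau, so $K_{X_n}+\Delta_n\sim_\qq 0$, and that $\tau_n$ is an automorphism of this pair: in each factor it swaps the two toric fixed points, so it permutes the $2n$ components of $\Delta_n$ in pairs, acting freely along $\Delta_n$. In particular $\Delta_n$ descends to a reduced Weil divisor $B_n$ on $Y_n$ with $\pi^{-1}(B_n)_{\mathrm{red}}=\Delta_n$, and this is the divisor in the statement.

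First I would pin down $\mathrm{Fix}(\tau_n)$. On a single factor, the fixed points of $\tau_1\colon(x\colon y)\mapsto(y\colon x)$ are $(1\colon 1)$ and $(1\colon -1)$, both lying in the torus $\{xy\neq 0\}$; passing to a Möbius coordinate $t$ centred at such a point, $\tau_1$ becomes $t\mapsto -t$. Hence $\mathrm{Fix}(\tau_n)=\{(1\colon\pm 1)\}^{\,n}$ is a set of $2^n$ points of the open torus of $X_n$, disjoint from $\Delta_n$, and in suitable analytic coordinates $\tau_n$ is the diagonal involution $(t_1,\dots,t_n)\mapsto(-t_1,\dots,-t_n)$ near each of them.

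Next I would analyse $(Y_n,B_n)$ separately near and away from $B_n$. Near $B_n$: since $\tau_n$ acts freely in a neighbourhood of $\Delta_n$, the map $\pi$ is étale there, so $(Y_n,B_n)$ is étale-locally isomorphic to the log smooth pair $(X_n,\Delta_n)$ at every point of $B_n$; hence $(Y_n,B_n)$ is simple normal crossing along $B_n$ and $B_n$ is Cartier along $B_n$, while off $B_n$ it is the zero divisor and thus Cartier for free, so $B_n$ is a reduced Cartier divisor. Away from $B_n$: the only singular points of $Y_n$ are the $2^n$ images of $\mathrm{Fix}(\tau_n)$, each an $\qq$-factorial klt quotient singularity $\Af^n/\{\pm 1\}$ (a finite quotient of a smooth variety; see \cite[Proposition 5.20]{KM98}), so $Y_n$ is klt and $K_{Y_n}+B_n$ is $\qq$-Cartier. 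Taking $Z\coloneqq\mathrm{Sing}(Y_n)$, which is contained in $Y_n\setminus B_n$, the pair $(Y_n,B_n)$ is log smooth on $Y_n\setminus Z$, and every divisor $E$ over $Y_n$ with centre in $Z$ satisfies $a(E;Y_n,B_n)=a(E;Y_n)>-1$ because $B_n$ avoids $Z$ and $Y_n$ is klt; this is exactly the definition of dlt. Finally, $\pi$ is étale in codimension $1$, so $K_{X_n}+\Delta_n=\pi^*(K_{Y_n}+B_n)$; since $K_{X_n}+\Delta_n\sim_\qq 0$ and pull-back along the finite surjective $\pi$ reflects $\qq$-linear triviality (from $\deg\pi\cdot(K_{Y_n}+B_n)\sim_\qq\pi_*\pi^*(K_{Y_n}+B_n)$), we conclude $K_{Y_n}+B_n\sim_\qq 0$, so $(Y_n,B_n)$ is log Calabi--Yau. (The precise index, $1$ or $2$, is then the content of Proposition~\ref{cor:reducedCY}.)

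I do not expect a serious obstacle: once the fixed-point computation of the second paragraph is in hand, everything is formal descent along a map that is étale along $B_n$ and étale in codimension $1$ globally. The only places requiring a little care are the verification that $\mathrm{Fix}(\tau_n)$ is disjoint from $\Delta_n$ and the check that $(Y_n,B_n)$ literally satisfies the definition of dlt, i.e.\ correctly choosing the log smooth locus and bounding the discrepancies over its complement.
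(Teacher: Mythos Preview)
Your proposal is correct and follows essentially the same route as the paper: show that $\tau_n$ fixes only the $2^n$ torus points (hence acts freely in codimension $1$ and freely along $\Delta_n$), deduce that $\pi$ is \'etale along $\Delta_n$ so $B_n$ is reduced Cartier snc there, and conclude that $(Y_n,B_n)$ is dlt log Calabi--Yau. The only difference is that you spell out the dlt verification by separately checking the klt condition at the isolated $\Af^n/\{\pm 1\}$ quotient singularities, whereas the paper compresses this into the line ``$B_n$ is a reduced Cartier divisor with only simple normal crossings, since $\Delta_n$ is so. Hence, $(Y_n,B_n)$ is a dlt pair.''
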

\begin{proof}
If $n \geq 2$, the action of $\tau_n$ is free in codimension 1. The morphism $X_n \rar Y_n$ is \'etale in codimension 1, so the pair $(Y_{n}, B_{n})$ is log Calabi--Yau. The quotient $X_n \rar Y_n$ is \'etale along $\Delta_n$, and $B_{n}$ is a reduced Cartier divisor with only simple normal crossings, since $\Delta_n$ is so. 
Hence, $(Y_n,B_n)$ is a dlt pair.
\end{proof}

\begin{lemma}\label{lem:importanceof2}
If $n \geq 2$ is odd, $\K Y_n. + B_n \not \sim 0$ but $2(\K Y_n. +B_n) \sim 0$.
\end{lemma}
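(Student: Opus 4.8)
The plan is to establish the two assertions separately. For the linear equivalence $2(\K Y_n.+B_n)\sim 0$ I would first record that $(Y_n,B_n)$ is a dlt log Calabi--Yau pair of \emph{coregularity $0$}: the toric pair $(X_n,\Delta_n)=(\pr 1.,(0{:}1)+(1{:}0))^n$ has the torus-fixed points as $0$-dimensional log canonical centers, hence coregularity $0$, and since $X_n\to Y_n$ is finite and crepant, Proposition~\ref{prop_quotient_coreg} gives $\mathrm{coreg}(Y_n,B_n)=0$. As the previous lemma already shows that $(Y_n,B_n)$ is dlt log Calabi--Yau with $B_n=B_n\sups =1.$, Proposition~\ref{dlt:reduced} applies directly and yields $2(\K Y_n.+B_n)\sim 0$, with no restriction on the parity of $n$.

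For the nonvanishing $\K Y_n.+B_n\not\sim 0$ when $n\geq 2$ is odd, I would invoke Corollary~\ref{cor_orientability}: it suffices to show that $\mathcal{D}(B_n)$ is not an orientable closed pseudo-manifold. The dual complex of the toric boundary $\Delta_n$ of $\pn$ is the $(n-1)$-sphere, realised as the $n$-fold join $S^0*\cdots*S^0$: the $2n$ vertices are the coordinate divisors $D_{i,0},D_{i,1}$ for $i=1,\dots,n$, an intersection $\bigcap_\alpha D_{i_\alpha,\epsilon_\alpha}$ is a single (torus-fixed) point when the indices $i_\alpha$ are pairwise distinct and empty otherwise (since $D_{i,0}\cap D_{i,1}=\emptyset$), so the faces are exactly the subsets with at most one vertex per factor. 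The involution $\tau_n$ swaps $D_{i,0}\leftrightarrow D_{i,1}$ for every $i$, hence acts as the antipodal map on each $S^0$, and therefore antipodally on $S^{n-1}$. Since $X_n\to Y_n$ is quasi-\'etale and Galois with group $\langle\tau_n\rangle$, the induced quotient of regular $\Delta$-complexes (see \cite[\S17]{KX16}) gives
\[
\mathcal{D}(B_n)\;\simeq\;\mathcal{D}(\Delta_n)/\langle\tau_n\rangle\;\simeq\;S^{n-1}/(\pm 1)\;=\;\mathbb{RP}^{n-1}.
\]
Now $\mathbb{RP}^{n-1}$ is orientable precisely when $n-1$ is odd (or $n=1$), so for $n\geq 2$ odd it is a non-orientable closed pseudo-manifold, and Corollary~\ref{cor_orientability}(i) gives $\K Y_n.+B_n\not\sim 0$.

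A concrete incarnation of the same computation, which I would add as a cross-check and which sidesteps dual complexes altogether, is the following. Since $X_n\to Y_n$ is \'etale in codimension $1$ we have $\pi^*(\K Y_n.+B_n)=\K X_n.+\Delta_n\sim 0$ and $H^0(Y_n,\O Y_n.(\K Y_n.+B_n))=H^0(X_n,\O X_n.(\K X_n.+\Delta_n))^{\langle\tau_n\rangle}$; the latter is one-dimensional (as $\K X_n.+\Delta_n\sim 0$ and $X_n$ is proper connected) and spanned by the logarithmic form $\omega=\tfrac{dx_1}{x_1}\wedge\cdots\wedge\tfrac{dx_n}{x_n}$. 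Since $\tau_1^{*}\!\left(\tfrac{dx}{x}\right)=-\tfrac{dx}{x}$, one gets $\tau_n^{*}\omega=(-1)^n\omega=-\omega$ for $n$ odd, so there is no nonzero $\tau_n$-invariant section and $H^0(Y_n,\O Y_n.(\K Y_n.+B_n))=0$; were $\K Y_n.+B_n$ linearly trivial this group would be $H^0(Y_n,\O Y_n.)\neq 0$. I do not anticipate a genuine obstacle in this lemma: the only points demanding care are the identification of global sections downstairs with $\tau_n$-invariants upstairs (already implicit in the quasi-\'etale statement of the previous lemma) and the sign $\tau_1^{*}(dx/x)=-dx/x$; everything else is bookkeeping about the sphere $S^{n-1}$ and its antipodal involution.
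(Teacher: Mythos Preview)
Your argument is correct and essentially matches the paper's own proof: both identify $\mathcal{D}(B_n)\simeq\mathbb{RP}^{n-1}$ as the antipodal quotient of $\mathcal{D}(\Delta_n)\simeq S^{n-1}$ and deduce $\K Y_n.+B_n\not\sim 0$ from non-orientability, and both give (as you do in your cross-check) the direct computation $\tau_n^*\omega=(-1)^n\omega$ on the logarithmic volume form. The one difference worth noting is that for $2(\K Y_n.+B_n)\sim 0$ the paper argues directly---either by observing that $\omega^{\otimes 2}$ is $\tau_n$-invariant and hence descends, or by noting that $\Cl(Y_n)$ has only $2$-torsion---whereas you invoke Proposition~\ref{dlt:reduced}. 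Your route is logically fine, but since the whole point of the example is to exhibit the sharpness of that very proposition, the paper's self-contained argument is more in keeping with the purpose of the section; your cross-check paragraph already contains the ingredients for the direct descent, so you could simply promote it.
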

\begin{proof}
We observe that the log canonical divisor $\K Y_n. + B_n$ is not Cartier. Indeed, the involution $\tau_{n}$ does not preserve the volume form $dz_1/z_1 \wedge dz_2/z_2 \wedge \ldots \wedge dz_n/z_n$ on $\mathbb{G}^{n}_m = X_{n} \setminus \Delta_n$, where $z_i \coloneqq x/y$ is a coordinate on the torus of the $i$-th copy of $\pp^1$ in $X_{n} = (\pp^1)^n$. However, $2(\K Y_n. +B_n) \sim 0$, since the class group of $Y_{n}$ has only 2-torsion.

Alternatively, note that the dual complex $\mathcal{D}(\Delta_n)$ is an $n$-dimensional  hyperoctahedron, thus it is PL-homeomorphic to $\mathbb{S}^{n-1}$.
The involution $\tau_{n}$ induces the antipodal involution on $\mathbb{S}^{n-1}$.
Hence, $\mathcal{D}(B_n) \simeq \mathbb{RP}^{n-1}$, which is not orientable for $n \geq 2$ odd.
By Lemma \ref{lem:pseudomflddualcomplex}, it follows that $\K Y_n. + B_n \not \sim 0$, while $2(\K Y_n. +B_n) \sim 0$ by descending $\K X_n. + \Delta_n \sim 0$. 
\end{proof}

Examples of the behavior in Lemma \ref{lem:importanceof2} in even dimension $n > 2$ can be achieved by replacing $(Y_{n}, B_{n})$ by $(Y_{n-1}, B_{n-1}) \times (X_1, \Delta_1)$; see also \cite[Example 7.4]{Mauri2020}.
This shows that the least common multiple in Theorem \ref{introthm:main-thm} and Proposition \ref{cor:reducedCY} is indeed necessary.
In view of Theorem~\ref{introcor-GS2}, it is also interesting to note that $B_n$ is a toric simple normal crossing variety but $\pi_1(B_n) \simeq \zz/2\zz$ and $K_{B_n} \not \sim 0$. 
\end{example}

\begin{example}\label{ex:quotientfamily}
With the notation of Example~\ref{ex:RPN}, let $[x_i : y_i]$ be homogeneous coordinates on the $i$-th copy of $\pp^1$ in $X_{n} = (\pp^1)^n$.
The pencil
\[\mathcal{X} \coloneqq \left\{ ([x_1:y_1],\dots,[x_n:y_n],t) \in X_n \times \mathbb{A}^1 \mid \prod_i x_i^2 + \prod_i y^2_i + t \prod_i x_i y_i =0 \right\} \to \mathbb{A}^1\]
is an example of a minimal family of Calabi--Yau varieties of coregularity 0 at $t=0$, locally stable in a neighborhood of $t=0$.
The fiber $\mathcal{X}_0$ is the toric boundary of $X_{n}$, so the assumptions of Theorem~\ref{introcor-GS2} hold for $\mathcal{X}$. Therefore, the general fiber $\mathcal{X}_{t}$ satisfies $K_{\mathcal{X}_{t}} \sim 0$, which is also clear since $\mathcal{X}_{t}$ is a smooth anticanonical hypersurface of $X_{n}$.

Note that $\mathcal{X}$ is $\tau_n$-invariant.
The quotient $\mathcal{Y} \coloneqq \mathcal{X}/\tau_n \to \mathbb{A}^1$ is still a minimal family of Calabi--Yau varieties of coregularity 0 at $t=0$, locally stable in a neighborhood of $t=0$.
The fiber $\mathcal{Y}_0$ is the boundary $B_{n}$. We are no longer in the assumptions of Theorem~\ref{introcor-GS2} as $\pi_1(B_n) \simeq \zz/2\zz$. In fact we have $K_{\mathcal{Y}_{t}} \not \sim 0$, but $2 K_{\mathcal{Y}_{t}} \sim 0$ according to Theorem~\ref{introcor-GS}. For instance, if $n=3$ and $t$ is general, $\mathcal{Y}_{t}$ is an Enriques surface.
\end{example}

\begin{example}[{\cite[Example 61]{KX16}}]\label{ex:permutation}
The cyclic group $\zz/m\zz$ acts on $\pp^{m-1}$, with $m>2$, by permuting the homogeneous coordinates $[x_1: \ldots : x_m]$.
Set $\Delta \coloneqq \{x_1 \cdot \ldots \cdot x_m=0\}$.
The quotient $(Y, B) \coloneqq (\pp^{m-1}, \Delta)/(\zz/m\zz)$ is a log Calabi--Yau pair with reduced boundary and with complicated self-intersection, and it has index $2$ if $m$ is even, and index $1$ if $m$ is odd.
Indeed, a generator of $H^0(\pp^m,\mathcal{O}_{\pp^m}( K_{\pp^m} + \Delta))$ is given by
\[\omega \coloneqq \sum_{i} (-1)^i x_i dx_0 \wedge \ldots \wedge \widehat{ d x_i} \wedge \ldots \wedge dx_n.\]
A permutation $\sigma$ of the coordinates sends $\omega$ to $\mathrm{sgn}(\sigma) \cdot \omega$, so $\sigma$ preserves $\omega^{\otimes 2}$, i.e., $2(K_{Y} + B) \sim 0$.
This shows that although the order of the group we quotient by (i.e., $m$) can be arbitrarily large, the index of the quotient log Calabi--Yau pair $(Y, B)$ is always at most 2.\footnote{A variation of Example \ref{ex:permutation} in positive characteristic appears in \cite[Proposition 3.6]{T22}.}
\end{example}

\begin{example}
Theorem \ref{introthm:main-thm} does not impose any constraints on the index of $X$, i.e., the smallest positive integer $n$ such that $n K_X$ is Cartier.
Take for instance the weighted projective space $\pp(1,1,n)$ with its toric boundary.
For $n>2$, the Cartier index of $\pp(1,1,n)$ is $n$, while the index of any toric pair is $1$. 
\end{example}

\begin{example}
\label{ex:pseudo-not-double-cover}
In general, a pseudo-manifold need not have an orientable double cover which is a topological covering space.
For instance, we can consider the suspensions $S\mathbb{S}^2$ and $S\mathbb{RP}^2$, with vertices $\{a,b\}$ and $\{p,q\}$, respectively.
Notice that the pseudo-manifold $S\mathbb{S}^2$ is orientable while $S\mathbb{RP}^2$ is not.
The space $S\mathbb{RP}^2$ is simply connected, so it does not admit any topological covering space.
On the other hand, the morphism $S\mathbb{S}^2 \rar S\mathbb{RP}^2$ provides an orientable double cover ramified at $\{p,q\}$. 
This is an incarnation of a more general fact, as every pseudo-manifold admits a \emph{branched} orientation double cover; see \cite[\S5.2]{Matthews}.
\end{example}

\bibliographystyle{habbvr}
\bibliography{bib}

\begin{thebibliography}{10}
\expandafter\ifx\csname url\endcsname\relax
  \def\url#1{\texttt{#1}}\fi
\expandafter\ifx\csname doi\endcsname\relax
  \def\doi#1{\burlalt{doi:#1}{http://dx.doi.org/#1}}\fi
\expandafter\ifx\csname urlprefix\endcsname\relax\def\urlprefix{URL }\fi
\expandafter\ifx\csname href\endcsname\relax
  \def\href#1#2{#2}\fi
\expandafter\ifx\csname burlalt\endcsname\relax
  \def\burlalt#1#2{\href{#2}{#1}}\fi

\bibitem{AK00}
D.~Abramovich and K.~Karu.
\newblock Weak semistable reduction in characteristic 0.
\newblock {\em Invent. Math.}, 139(2):241--273, 2000.
\newblock \doi{10.1007/s002229900024}.

\bibitem{AEC2021}
V.~{Alexeev}, P.~{Engel}, and C.~{Han}.
\newblock {Compact moduli of K3 surfaces with a nonsymplectic automorphism}.
\newblock 2021,
  \burlalt{arXiv:2110.13834}{http://arxiv.org/abs/arXiv:2110.13834}.

\bibitem{Bir19}
C.~Birkar.
\newblock Anti-pluricanonical systems on {F}ano varieties.
\newblock {\em Ann. of Math. (2)}, 190(2):345--463, 2019.
\newblock \doi{10.4007/annals.2019.190.2.1}.

\bibitem{BCS2016}
S.~Boissi\`ere, C.~Camere, and A.~Sarti.
\newblock Classification of automorphisms on a deformation family of
  hyper-{K}\"{a}hler four-folds by {$p$}-elementary lattices.
\newblock {\em Kyoto J. Math.}, 56(3):465--499, 2016.
\newblock \doi{10.1215/21562261-3600139}.

\bibitem{Bredon1972}
G.~E. Bredon.
\newblock {\em Introduction to compact transformation groups}.
\newblock Pure and Applied Mathematics, Vol. 46. Academic Press, New
  York-London, 1972.

\bibitem{dFKX17}
T.~de~Fernex, J.~Koll\'{a}r, and C.~Xu.
\newblock The dual complex of singularities.
\newblock In {\em Higher dimensional algebraic geometry---in honour of
  {P}rofessor {Y}ujiro {K}awamata's sixtieth birthday}, volume~74 of {\em Adv.
  Stud. Pure Math.}, pages 103--129. Math. Soc. Japan, Tokyo, 2017.
\newblock \doi{10.2969/aspm/07410103}.

\bibitem{Debarre2001}
O.~Debarre.
\newblock {\em Higher-dimensional algebraic geometry}.
\newblock Universitext. Springer-Verlag, New York, 2001.
\newblock \doi{10.1007/978-1-4757-5406-3}.

\bibitem{ETW22}
L.~Esser, B.~Totaro, and C.~Wang.
\newblock Calabi--{Y}au pairs of large index, 2022, \burlalt{Work in
  progress}{http://arxiv.org/abs/Work in progress}.

\bibitem{FMP22}
F.~Figueroa, J.~Moraga, and J.~Peng.
\newblock Log canonical thresholds and coregularity, 2022,
  \burlalt{arXiv:2204.05408}{http://arxiv.org/abs/arXiv:2204.05408}.

\bibitem{FS20}
S.~Filipazzi and R.~Svaldi.
\newblock On the connectedness principle and dual complexes for generalized
  pairs, 2020,
  \burlalt{arXiv:2010.08018}{http://arxiv.org/abs/arXiv:2010.08018}.

\bibitem{Fuj00}
O.~Fujino.
\newblock Abundance theorem for semi log canonical threefolds.
\newblock {\em Duke Math. J.}, 102(3):513--532, 2000.
\newblock \doi{10.1215/S0012-7094-00-10237-2}.

\bibitem{Fuj01}
O.~Fujino.
\newblock The indices of log canonical singularities.
\newblock {\em Amer. J. Math.}, 123(2):229--253, 2001.
\newblock
  \urlprefix\url{http://muse.jhu.edu/journals/american_journal_of_mathematics/v123/123.2fujino.pdf}.

\bibitem{Fujino2011}
O.~Fujino.
\newblock Semi-stable minimal model program for varieties with trivial
  canonical divisor.
\newblock {\em Proc. Japan Acad. Ser. A Math. Sci.}, 87(3):25--30, 2011.
\newblock \urlprefix\url{http://projecteuclid.org/euclid.pja/1299161391}.

\bibitem{FG14}
O.~Fujino and Y.~Gongyo.
\newblock Log pluricanonical representations and the abundance conjecture.
\newblock {\em Compos. Math.}, 150(4):593--620, 2014.
\newblock \doi{10.1112/S0010437X13007495}.

\bibitem{Gon13}
Y.~Gongyo.
\newblock Abundance theorem for numerically trivial log canonical divisors of
  semi-log canonical pairs.
\newblock {\em J. Algebraic Geom.}, 22(3):549--564, 2013.
\newblock \doi{10.1090/S1056-3911-2012-00593-1}.

\bibitem{Gross13}
M.~Gross.
\newblock Mirror symmetry and the {S}trominger--{Y}au--{Z}aslow conjecture.
\newblock In {\em Current developments in mathematics 2012}, pages 133--191.
  Int. Press, Somerville, MA, 2013.

\bibitem{HMc07}
C.~D. Hacon and J.~M\textsuperscript{c}Kernan.
\newblock On {S}hokurov's rational connectedness conjecture.
\newblock {\em Duke Math. J.}, 138(1):119--136, 2007.
\newblock \doi{10.1215/S0012-7094-07-13813-4}.

\bibitem{HX16}
C.~D. Hacon and C.~Xu.
\newblock On finiteness of {B}-representations and semi-log canonical
  abundance.
\newblock In {\em Minimal models and extremal rays ({K}yoto, 2011)}, volume~70
  of {\em Adv. Stud. Pure Math.}, pages 361--377. Math. Soc. Japan, [Tokyo],
  2016.
\newblock \doi{10.2969/aspm/07010361}.

\bibitem{H02}
A.~Hatcher.
\newblock {\em Algebraic topology}.
\newblock Cambridge University Press, Cambridge, 2002.

\bibitem{Hu21}
Z.~Hu.
\newblock An abundance theorem for generalised pairs, 2021,
  \burlalt{arXiv:2103.11813}{http://arxiv.org/abs/arXiv:2103.11813}.

\bibitem{Huybrehcts2016}
D.~Huybrechts.
\newblock {\em Lectures on {K}3 surfaces}, volume 158 of {\em Cambridge Studies
  in Advanced Mathematics}.
\newblock Cambridge University Press, Cambridge, 2016.
\newblock \doi{10.1017/CBO9781316594193}.

\bibitem{Ish00}
S.~Ishii.
\newblock The index of a log-canonical singularity.
\newblock Number 1122, pages 115--125. 2000.
\newblock \doi{10.1016/s0167-2789(99)00012-3}.
\newblock Singularity theory and its applications (Japanese) (Kyoto, 1999).

\bibitem{Ishii2018}
S.~Ishii.
\newblock {\em Introduction to singularities}.
\newblock Springer, Tokyo, 2018.
\newblock \doi{10.1007/978-4-431-56837-7}.
\newblock Second edition of [ MR3288750].

\bibitem{Jiang21}
C.~Jiang.
\newblock A gap theorem for minimal log discrepancies of noncanonical
  singularities in dimension three.
\newblock {\em J. Algebraic Geom.}, 30(4):759--800, 2021.
\newblock \doi{10.1090/jag/759}.

\bibitem{Kol07}
J.~Koll\'{a}r.
\newblock {\em Lectures on resolution of singularities}, volume 166 of {\em
  Annals of Mathematics Studies}.
\newblock Princeton University Press, Princeton, NJ, 2007.

\bibitem{Kol13}
J.~Koll\'{a}r.
\newblock {\em Singularities of the minimal model program}, volume 200 of {\em
  Cambridge Tracts in Mathematics}.
\newblock Cambridge University Press, Cambridge, 2013.
\newblock \doi{10.1017/CBO9781139547895}.
\newblock With a collaboration of S\'{a}ndor Kov\'{a}cs.

\bibitem{Kollar21}
J.~Koll{\'a}r.
\newblock Families of varieties of general type.
\newblock
  \url{https://web.math.princeton.edu/~kollar/FromMyHomePage/modbook-final.pdf},
  2022.

\bibitem{KLSV2018}
J.~Koll\'{a}r, R.~Laza, G.~Sacc\`a, and C.~Voisin.
\newblock Remarks on degenerations of hyper-{K}\"{a}hler manifolds.
\newblock {\em Ann. Inst. Fourier (Grenoble)}, 68(7):2837--2882, 2018.

\bibitem{KM98}
J.~Koll\'{a}r and S.~Mori.
\newblock {\em Birational geometry of algebraic varieties}, volume 134 of {\em
  Cambridge Tracts in Mathematics}.
\newblock Cambridge University Press, Cambridge, 1998.
\newblock \doi{10.1017/CBO9780511662560}.
\newblock With the collaboration of C. H. Clemens and A. Corti, Translated from
  the 1998 Japanese original.

\bibitem{KX16}
J.~Koll\'{a}r and C.~Xu.
\newblock The dual complex of {C}alabi--{Y}au pairs.
\newblock {\em Invent. Math.}, 205(3):527--557, 2016.
\newblock \doi{10.1007/s00222-015-0640-6}.

\bibitem{Kondo92}
S.~Kondo.
\newblock Automorphisms of algebraic {$K3$} surfaces which act trivially on
  {P}icard groups.
\newblock {\em J. Math. Soc. Japan}, 44(1):75--98, 1992.
\newblock \doi{10.2969/jmsj/04410075}.

\bibitem{KS06}
M.~Kontsevich and Y.~Soibelman.
\newblock Affine structures and non-{A}rchimedean analytic spaces.
\newblock In {\em The unity of mathematics}, volume 244 of {\em Progr. Math.},
  pages 321--385. Birkh\"{a}user Boston, Boston, MA, 2006.
\newblock \doi{10.1007/0-8176-4467-9\_9}.

\bibitem{JL22}
J.~Liu and L.~Xie.
\newblock Semi-ampleness of generalized pairs, 2022,
  \burlalt{arXiv:2210.01731}{http://arxiv.org/abs/arXiv:2210.01731}.

\bibitem{Matsumoto2016}
Y.~{Matsumoto}.
\newblock {Degeneration of K3 surfaces with non-symplectic automorphisms}.
\newblock 2016,
  \burlalt{arXiv:1612.07569}{http://arxiv.org/abs/arXiv:1612.07569}.

\bibitem{Matthews}
K.~M. Matthews.
\newblock {\em Universal {P}oincar\'e duality for the intersection homology of
  branched and partial coverings of a pseudomanifold}.
\newblock ProQuest LLC, Ann Arbor, MI, 2016.
\newblock Thesis (Ph.D.)--Texas Christian University.

\bibitem{Mauri2020}
M.~Mauri.
\newblock The dual complex of log {C}alabi--{Y}au pairs on {M}ori fibre spaces.
\newblock {\em Adv. Math.}, 364:107009, 28, 2020.
\newblock \doi{10.1016/j.aim.2020.107009}.

\bibitem{MMS2022}
M.~Mauri, E.~Mazzon, and M.~Stevenson.
\newblock On the geometric {${\rm P}={\rm W}$} conjecture.
\newblock {\em Selecta Math. (N.S.)}, 28(3):Paper No. 65, 45, 2022.
\newblock \doi{10.1007/s00029-022-00776-0}.

\bibitem{Mor22}
J.~Moraga.
\newblock Coregularity of {F}ano varieties, 2022,
  \burlalt{arXiv:2206.10834}{http://arxiv.org/abs/arXiv:2206.10834}.

\bibitem{Nak21}
Y.~Nakamura.
\newblock Dual complex of log {F}ano pairs and its application to {W}itt vector
  cohomology.
\newblock {\em Int. Math. Res. Not. IMRN}, (13):9802--9833, 2021.
\newblock \doi{10.1093/imrn/rnz356}.

\bibitem{NX16}
J.~Nicaise and C.~Xu.
\newblock The essential skeleton of a degeneration of algebraic varieties.
\newblock {\em Amer. J. Math.}, 138(6):1645--1667, 2016.
\newblock \doi{10.1353/ajm.2016.0049}.

\bibitem{NXYu19}
J.~Nicaise, C.~Xu, and T.~Y. Yu.
\newblock The non-archimedean {SYZ} fibration.
\newblock {\em Compos. Math.}, 155(5):953--972, 2019.
\newblock \doi{10.1112/s0010437x19007152}.

\bibitem{Schwede2007}
K.~Schwede.
\newblock A simple characterization of {D}u {B}ois singularities.
\newblock {\em Compos. Math.}, 143(4):813--828, 2007.
\newblock \doi{10.1112/S0010437X07003004}.

\bibitem{SYZ01}
A.~Strominger, S.~T. Yau, and E.~Zaslow.
\newblock Mirror symmetry is {$T$}-duality [ {MR}1429831 (97j:32022)].
\newblock In {\em Winter {S}chool on {M}irror {S}ymmetry, {V}ector {B}undles
  and {L}agrangian {S}ubmanifolds ({C}ambridge, {MA}, 1999)}, volume~23 of {\em
  AMS/IP Stud. Adv. Math.}, pages 333--347. Amer. Math. Soc., Providence, RI,
  2001.
\newblock \doi{10.1016/0550-3213(96)00434-8}.

\bibitem{T22}
H.~Tanaka.
\newblock Contraction images of toric varieties, 2022,
  \burlalt{arXiv:2208.01454}{http://arxiv.org/abs/arXiv:2208.01454}.

\bibitem{Totaro2019}
B.~Totaro.
\newblock The failure of {K}odaira vanishing for {F}ano varieties, and terminal
  singularities that are not {C}ohen--{M}acaulay.
\newblock {\em J. Algebraic Geom.}, 28(4):751--771, 2019.
\newblock \doi{10.1090/jag/724}.

\bibitem{Xu20}
Y.~Xu.
\newblock Complements on log canonical fano varieties and index conjecture of
  log calabi-yau varieties.
\newblock {\em Cambridge University}, 2020.
\newblock \doi{10.17863/CAM.58891}.
\newblock Thesis (Ph.D.)--University of Cambridge.

\end{thebibliography}

\end{document}